\newtheorem{proposition}{Proposition}[section]
\newtheorem{theorem}[proposition]{Theorem}
\newtheorem{lemma}[proposition]{Lemma}
\theoremstyle{definition}
\newtheorem{definition}[proposition]{Definition}
\newtheorem{questions}[proposition]{Questions}
\newtheorem{example}[proposition]{Example}
\newtheorem*{rep@theorem}{\rep@title}
\newcommand{\newreptheorem}[2]{%
\newenvironment{rep#1}[1]{%
 \def\rep@title{#2 \ref{##1}}%
 \begin{rep@theorem}}%
 {\end{rep@theorem}}}
\colorlet{green1}{green!40!black!80!}
\newtheorem*{theorem*}{Theorem}
\newtheorem*{proposition*}{Proposition}
\newtheorem*{lemma*}{Lemma}
\newtheorem*{corollary*}{Corollary}
\newcommand{\bdry}{\partial}
\newcommand{\lk}{\operatorname{lk}}
\newcommand{\bk}{\backslash}
\newcommand{\bbF}{\mathbb{F}}
\newcommand{\bbX}{\mathbb{X}}
\newcommand{\x}{\times}
\renewcommand{\int}{\operatorname{int}}
\newcommand{\Id}{\operatorname{Id}}
\newcommand{\inv}{^{-1}}
\newcommand{\Prod}{\displaystyle \prod}
\newcommand{\vect}[1]{\overset{\rightharpoonup}{#1}}
\newcommand{\Skip}{\operatorname{Skip}}
\newcommand{\sgn}{\operatorname{sgn}}
\newcommand{\Sum}{\displaystyle \sum}
\renewcommand{\Cup}{\displaystyle \bigcup}
\newcommand{\pref}[1]{(\ref{#1})}
\begin{document}
\title[Moves relating C-complexes]{moves relating C-complexes:\\ A correction to Cimasoni's ``A geometric construction of the Conway potential function''}

\author{Christopher W.\ Davis}
\address{Department of Mathematics, University of Wisconsin--Eau Claire}
\email{daviscw@uwec.edu}
\urladdr{people.uwec.edu/daviscw}

\author{Taylor Martin}
\address{Department of Mathematics, Sam Houston State University}
\email{taylor.martin@shsu.edu}

\author{Carolyn Otto}
\address{Department of Mathematics, University of Wisconsin--Eau Claire}
\email{ottoa@uwec.edu}

\date{\today}

\subjclass[2020]{57K10}

\maketitle

\begin{abstract} 
In groundbreaking work from 2004, Cimasoni gave a geometric computation of the multivariable Conway potential function in terms of a generalization of a Seifert surface for a link called a C-complex \cite{CC}.  Lemma 3 of that paper provides a family of moves which relates any two C-complexes for a fixed link.  This allows for an approach to studying links from the point of view of C-complexes and in following papers it has been used to derive invariants.  This lemma is false.  We present counterexamples, a correction with detailed proof, and an analysis of the consequences of this error on subsequent works that rely on this lemma.
\end{abstract}

\section{Introduction and statement of results}

A C-complex is a generalization of a Seifert surface to the setting of links, or more broadly, colored links.   Specifically, a \emph{C-complex} is a union of compact, oriented, embedded surfaces in $S^3$ which are allowed to intersect in {clasps} with no triple points. These C-complexes first appeared in work of Cooper \cite{CoopThesis, Cooper} where they were used to compute the Alexander module, signature, and nullity of a 2-component link.  In 2004, Cimasoni extended Cooper's work to compute the multivariable Conway potential function for colored links of arbitrarily many components \cite{CC}.  In subsequent works including \cite{CimConZac}, \cite{CF}, \cite{CimTur}, \cite{ConFrTOf}, and \cite{ConNagTof}, this is extended to other signatures, Alexander modules, Casson-Gordon invariants,  Blanchfield forms, splitting numbers, links in quasi-cylinders, and other topics. 

A \emph{colored link} is an oriented link $L$ in $S^3$ such that each component $K$ of $L$ is assigned a \emph{color} $\sigma(K) \in \{1, \dots, n \}$.  We will express an $n$-colored link as $(L,\sigma)$ or  $L=L_1\cup\dots\cup L_n$ where $L_i = \sigma^{-1}(\{i\})$ is the $i$-colored sublink. Thus, a $1$-colored link is just an oriented link, and an $m$-colored, $m$-component link is an ordered, oriented link. Two $n$-colored links $(L, \sigma)$ and $(L', \sigma')$ are \emph{isotopic} if there exists a color and orientation preserving ambient isotopy from $L$ to $L'$.  A C-complex $F=F_1\cup\dots \cup F_n$ is said to be \emph{bounded} by the colored link $L$ if $\bdry F_i=L_i$ for $i=1,\dots, n$.   In  \cite{CC}  Cimasoni provides a set of geometric moves on C-complexes  which relate any pair of C-complexes for isotopic links.

\begin{lemma}[Lemmas 2 and 3 in \cite{CC}.  See also Lemma 2.2 in \cite{CF}]\label{false lemma}
Let $F$ and $F'$ be C-complexes bounded by isotopic colored links. Then, $F$ and $F'$ can be transformed into each other by a finite number of the following operations and their inverses:
\begin{enumerate}[label=(T\arabic*)] 
\setcounter{enumi}{-1}
\item \label{move: isotopy}\label{move:T0} Ambient isotopy,
\item \label{move: handle}\label{move:T1}  Handle attachment on one surface,
\item \label{move: ribbon+push}\label{move:T2}  Addition of a ribbon intersection followed by a ``push along an arc'' through this intersection, as in Figure \ref{fig: CF (T2)},
\item \label{move: pass through clasp}\label{move:T3}  The pass through a clasp move, as in Figure \ref{fig: CF (T3)}.
\end{enumerate}
\end{lemma}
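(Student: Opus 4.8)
Since the abstract already announces that this lemma is in fact \emph{false}, the honest plan is not to prove it but (a) to reconstruct the natural proof strategy precisely enough to see where it must break, and then (b) to use that diagnosis to build an explicit counterexample; the eventual correction should be a repair of (a) with an enlarged move set.

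The intended argument imitates the classical fact that any two Seifert surfaces for a fixed link become ambient isotopic after stabilizing each by finitely many handle attachments. First apply (T0) to arrange that $F$ and $F'$ literally cobound the same colored link $L$, agreeing on a tubular neighborhood of $L$. Then proceed color-by-color and clasp-by-clasp: for a fixed pair $i\ne j$, the signed number of clasps between $F_i$ and $F_j$ equals $\lk(L_i,L_j)$ and is hence forced, while (T2) is designed to create or cancel an opposite-sign pair of clasps and (T3) to push a sheet of one surface across an existing clasp. One hopes to use (T2) and (T3) to drive the clasps of each pair into a standard model (exactly $|\lk(L_i,L_j)|$ same-sign clasps in a fixed local picture), and then, with the clasps frozen in the model, apply the classical Seifert-surface result to the residual surface pieces to bring $F_i$ and $F'_i$ into agreement via (T0) and (T1). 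If both $F$ and $F'$ can be pushed to the same model C-complex, the proof is complete.

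The step I expect to be the main obstacle (and, per the abstract, the point where the original argument genuinely fails) is the interaction between the clasps and the isotopy/compression moves: a clasp arc glues a sheet of $F_i$ to a sheet of $F_j$, so neither surface can be isotoped or handle-stabilized freely, and it is far from clear that (T2) and (T3) are rich enough to untangle every configuration of clasps --- for instance to permute the cyclic order in which several clasps meet $L_i$, or to unhook two clasps that are themselves linked together through the complementary surfaces. This is where I would hunt for a counterexample: take a two-colored, two-component link with $\lk(L_1,L_2)=0$, realize the cancellation once by a C-complex with a $+$ clasp and a $-$ clasp in ``unlinked'' position and once by a C-complex with the two clasps ``interlocked,'' and certify that no sequence of (T0)--(T3) connects them --- ideally by producing an invariant of C-complexes modulo these moves that separates the two, or else by a careful case analysis of the local effect of each move. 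Once such a pair is in hand, the fix is to adjoin whatever move performs the needed re-threading and to redo the normalization argument above with the augmented list; making that corrected normalization genuinely complete is then the substantive work.
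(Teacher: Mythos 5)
You correctly recognize that the statement is false and that the only route to establishing that is to exhibit two C-complexes for the same colored link together with an invariant of C-complexes modulo moves \ref{move: isotopy}--\ref{move: pass through clasp} that separates them; this is exactly the strategy of Theorem~\ref{thm: false lemma is false}. Your guess at the shape of the counterexample is also close to the paper's: the example in Figure~\ref{fig: other example} begins with a $2$-colored link whose two Seifert surfaces meet in a single ribbon intersection, and the C-complexes $F$ and $F'$ are produced by splitting that ribbon into two clasps along two different arcs --- precisely the ``same boundary link, differently threaded clasps'' picture you sketch, and indeed with cancelling clasp signs.

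What your proposal leaves entirely open is the concrete separating quantity, and that is the substantive content of the section. The paper introduces the \emph{skip the clasps link} $\Skip(F)$: take a collar $A_i\subseteq F_i$ of $\partial F_i$ wide enough to contain all clasps; $A_i$ is a disjoint union of annuli whose inner boundary is a link $L_i'$, and one sets $\Skip(F)=L_1'\cup\cdots\cup L_n'$. Lemma~\ref{lem: skip is invariant} verifies by local inspection that $\Skip$ is unchanged by each of \ref{move: isotopy}, \ref{move: handle}, \ref{move: ribbon+push}, and \ref{move: pass through clasp}; for the C-complexes of Figure~\ref{fig: other example}, $\Skip(F')$ is an unlink while $\Skip(F)$ has nonzero pairwise linking numbers, which finishes the argument. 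Your fallback of a ``careful case analysis of the local effect of each move'' cannot substitute here: there is no a priori bound on the length of a putative sequence of moves connecting two C-complexes, so one genuinely needs an invariant, and producing $\Skip(F)$ is the missing idea.
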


\begin{figure}
     \centering
     \begin{subfigure}[b]{0.4\textwidth}
         \centering
         \begin{tikzpicture}
         \node at (0,0){\includegraphics[width=.5\textwidth]{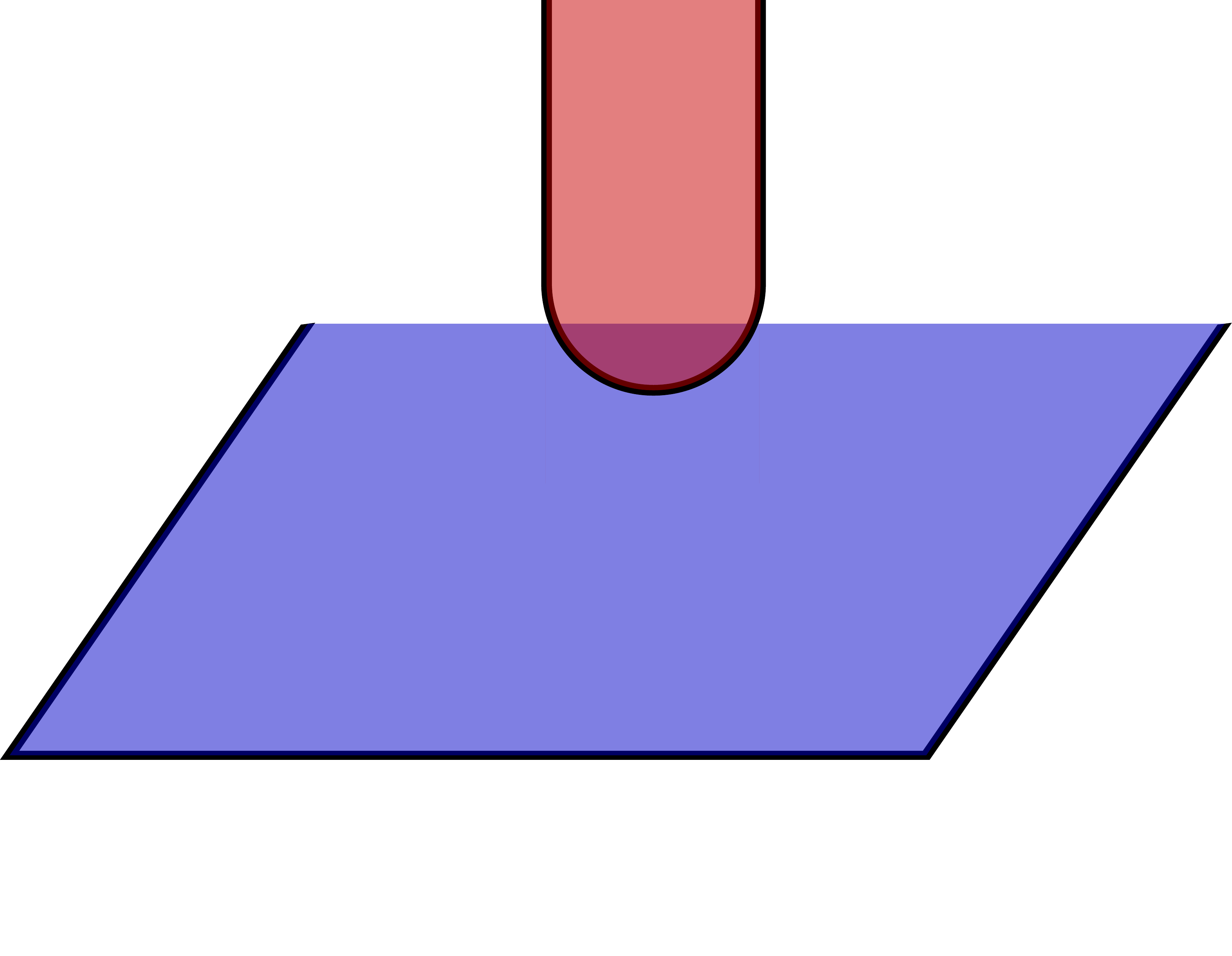}};
        \node at (3,0){\includegraphics[width=.5\textwidth]{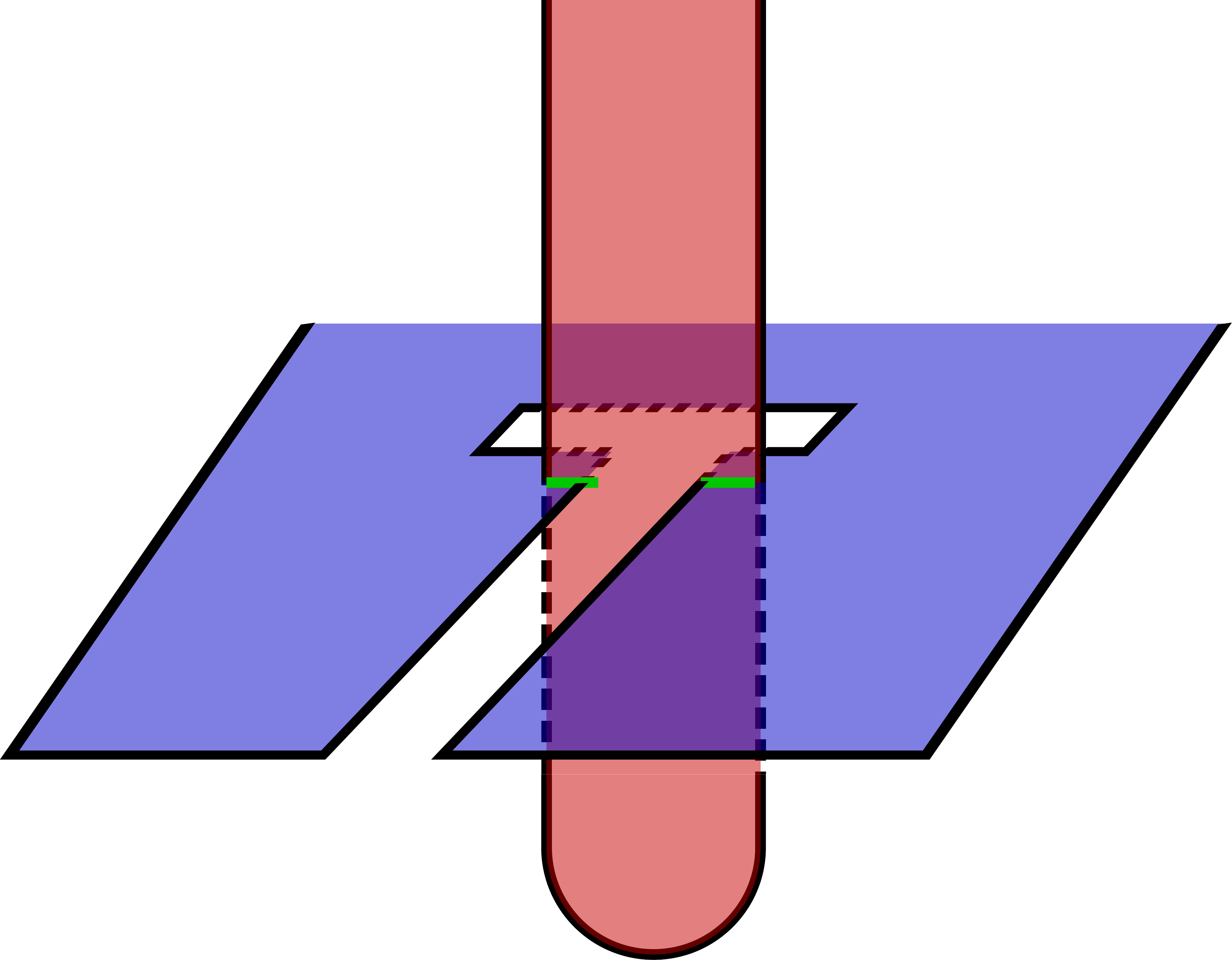}};
        \node at (1.8,.3){$\leftrightarrow$};
         \end{tikzpicture}
         \caption{The \ref{move: ribbon+push} move}
         \label{fig: CF (T2)}
     \end{subfigure}
     \begin{subfigure}[b]{0.4\textwidth}
         \centering
         \begin{tikzpicture}
         \node at (0,0){\includegraphics[width=.5\textwidth]{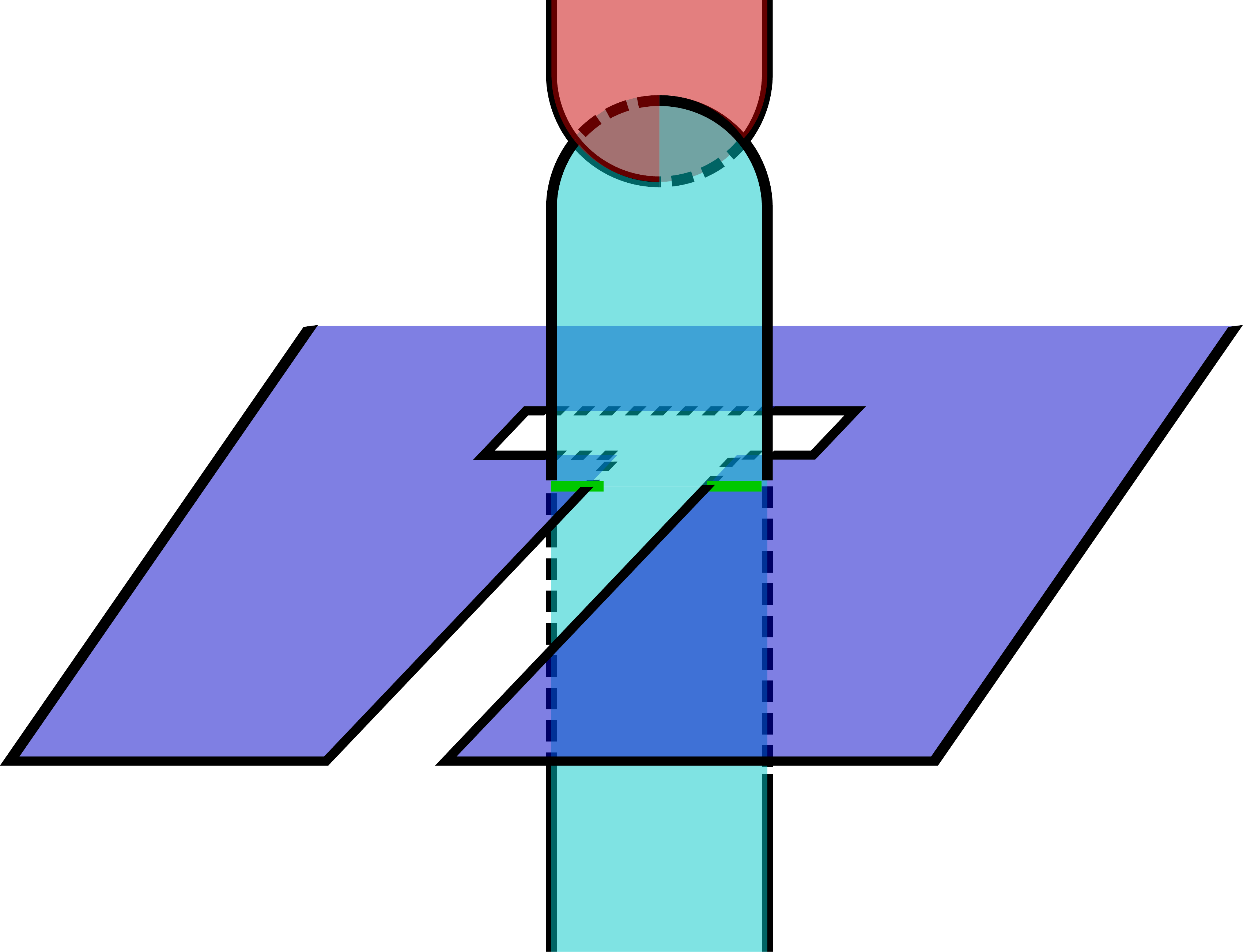}};
        \node at (3,0){\includegraphics[width=.5\textwidth]{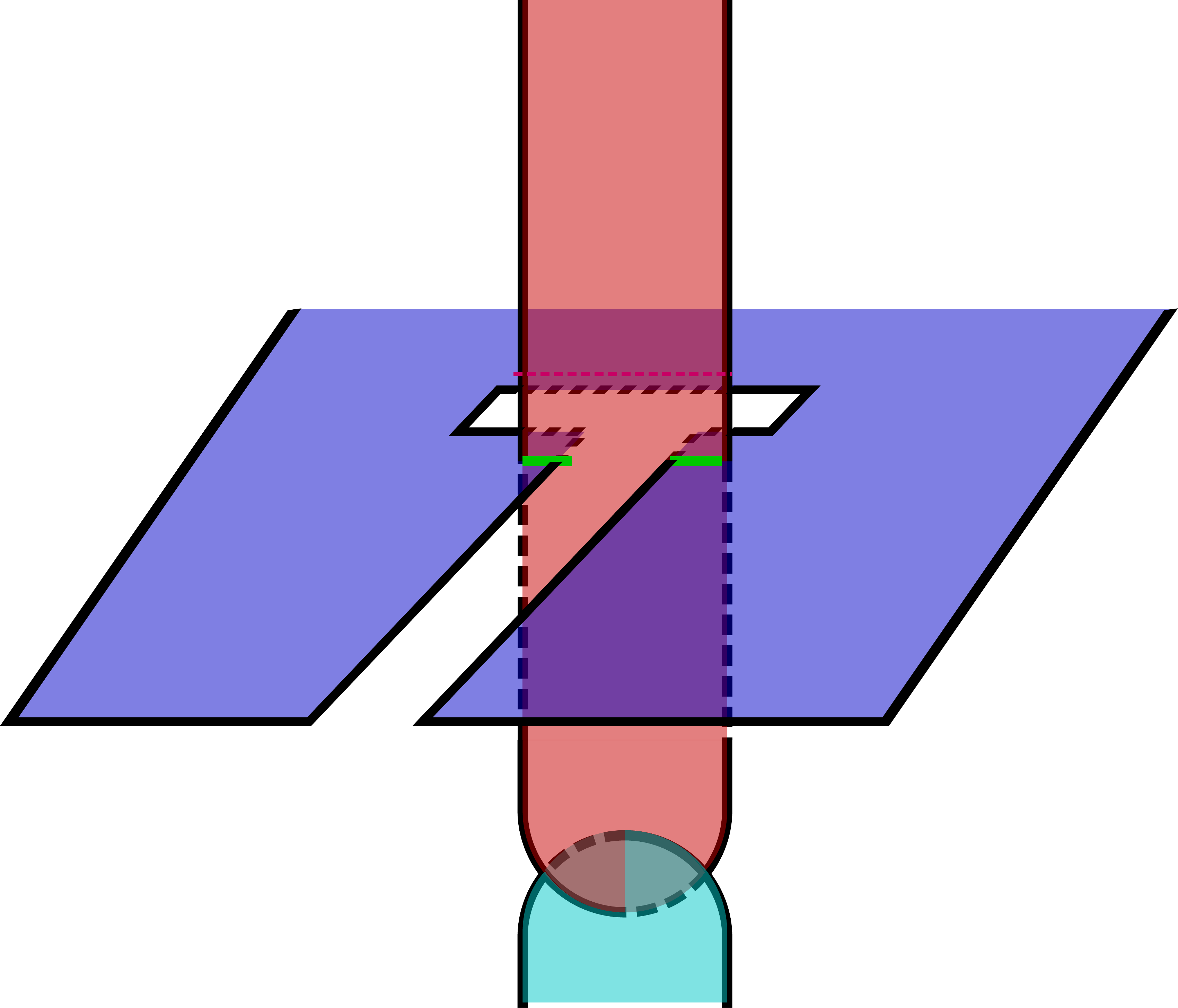}};
        \node at (1.8,.3){$\leftrightarrow$};
         \end{tikzpicture}
         \caption{The \ref{move: pass through clasp} move}
         \label{fig: CF (T3)}
     \end{subfigure}
     
        \caption{}
        \label{fig: CF moves}
\end{figure}

Some noteworthy applications of this lemma are in \cite{CC}, \cite{CF}, and \cite{CimTur} where these C-complex moves are used to verify that certain properties of a C-complex are invariants of the underlying link.  The first goal of this document is to demonstrate that the set of moves in Lemma~\ref{false lemma} is incomplete.  

\begin{theorem}\label{thm: false lemma is false}
The C-complexes $F$ and $F'$ appearing in Figures~\ref{fig: other example 1} and \ref{fig: other example 2} cannot be related by any sequence of the moves \ref{move: isotopy}, \ref{move: handle}, \ref{move: ribbon+push}, and \ref{move: pass through clasp}.  Therefore, Lemma~\ref{false lemma} is false.
\end{theorem}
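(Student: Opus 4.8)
The plan is to find an invariant of C-complexes which is preserved by each of the four moves \ref{move:T0}--\ref{move:T3} but which distinguishes the two C-complexes $F$ and $F'$ in Figures~\ref{fig: other example 1} and \ref{fig: other example 2}. The natural first step is to catalogue what each move does to the underlying topology. Move \ref{move:T0} changes nothing up to homeomorphism. Move \ref{move:T1} attaches a handle to a single surface $F_i$, changing its genus (and hence its Euler characteristic) but not the number of clasps, nor which pairs of colors clasp each other, nor the first homology of the total complex modulo the handle. Move \ref{move:T2} introduces a single new clasp between two surfaces $F_i$ and $F_j$ together with a compensating ``push,'' so it changes the clasp count by one but in a controlled way. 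Move \ref{move:T3} slides a surface through an existing clasp and changes neither the number of clasps nor the Euler characteristics. So I would isolate a quantity assembled from (a) the number $n_{ij}$ of clasps between $F_i$ and $F_j$ for each pair $i<j$, (b) the Euler characteristics $\chi(F_i)$, and (c) some homological data of the glued space $F=\bigcup F_i$, and track exactly how the vector of these data transforms under \ref{move:T1}--\ref{move:T3}.

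Next I would compute these data for the two specific C-complexes in the figures. The point of the counterexample is presumably that $F$ and $F'$ bound isotopic links and have the ``same'' crude data — same number of components, same colors, same pairwise clasp pattern perhaps — yet differ in some finer invariant, such as the parity of the number of clasps between a particular pair of surfaces, or the isomorphism type of $H_1(F)$ together with the clasp-intersection pairing on it, once one quotients by the ambiguity that \ref{move:T1} and \ref{move:T2} introduce. Concretely, I expect to show that every one of the four moves preserves $n_{ij} \bmod 2$ for each pair (since \ref{move:T1} and \ref{move:T3} change no clasps and \ref{move:T2} changes exactly one $n_{ij}$ by one but simultaneously alters the homology in a way that is also detected), and then exhibit that $F$ and $F'$ disagree on this refined mod-$2$ invariant. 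If parity alone were not an obstruction for Cimasoni's moves as literally stated, the correct invariant is likely the pair $\bigl(\sum_{i<j} n_{ij} \bmod 2,\ [\text{something in } H_1]\bigr)$, and one verifies move-by-move that this pair is constant.

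The main obstacle, and where most of the work lies, is verifying that the chosen invariant really is unchanged by move \ref{move:T2}: this is the only move that creates or destroys a clasp, so any invariant sensitive enough to separate $F$ from $F'$ must interact subtly with it, and one has to analyze precisely how the ``addition of a ribbon intersection followed by a push along an arc'' modifies both the clasp data and the homology simultaneously, confirming the two changes cancel in the invariant. A secondary, more mechanical difficulty is carefully reading off the clasp pattern and homological data from the specific diagrams in Figures~\ref{fig: other example 1} and \ref{fig: other example 2} — establishing that these really are honest C-complexes bounded by isotopic links, orienting everything consistently, and computing the invariant on each side without sign or bookkeeping errors. Once the invariant is shown to be move-invariant and to take different values on $F$ and $F'$, the theorem follows immediately, since any sequence of moves relating them would force the values to agree.
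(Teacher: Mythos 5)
Your overall strategy---find a quantity preserved by \ref{move:T0}--\ref{move:T3} that separates $F$ from $F'$---is exactly the right framework, and it matches the shape of the paper's argument. But the specific candidate invariants you propose would not actually work, and this is the genuine gap.

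You suggest building an invariant from (a) the clasp counts $n_{ij}$ (possibly mod $2$), (b) the Euler characteristics $\chi(F_i)$, and (c) homological data of $F$. The problem is that the two C-complexes $F$ and $F'$ in Figures~\ref{fig: other example 1} and \ref{fig: other example 2} agree on all of these. Both are obtained from the \emph{same} pair of annuli sharing a \emph{single} ribbon intersection by splitting that ribbon into two clasps via a push along an arc; the two choices of arc give the two C-complexes. So in both cases $n_{12}=2$, the surfaces $F_1,F_2$ are (abstractly) the same surfaces on each side, and $H_1(F)\cong H_1(F')$. No combination of clasp counts, Euler characteristics, or abstract homology can tell them apart. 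In fact this is precisely why the error in Cimasoni's lemma went undetected for so long: the natural algebraic invariants one reads off a C-complex---including the linking matrices, hence the signature, nullity, and Conway potential, which are the invariants studied in \cite{CC} and \cite{CF}---all \emph{are} preserved by \ref{move:T0}--\ref{move:T3} and also by the new move \ref{move:T4}. An invariant sensitive enough to separate $F$ from $F'$ must be geometric rather than combinatorial.

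What the paper actually uses is the ``skip the clasps link'' $\Skip(F)$: push each component $L_i$ inward across a collar of $F_i$ wide enough to contain all clasps, and take the resulting link. One checks directly (Lemma~\ref{lem: skip is invariant}) that each of \ref{move:T0}--\ref{move:T3} preserves the isotopy class of $\Skip(F)$: isotopy and handle attachment are trivially harmless since they act away from the clasps, and \ref{move:T2}, \ref{move:T3} are verified by inspection of the local pictures. Then one computes $\Skip(F)$ and $\Skip(F')$ for the figures: one has a pair of components with nonzero linking number, the other is the unlink. This is a link-isotopy invariant, not an algebraic one, and it records exactly the geometric information (how the pushed-off curves wind around each other near the clasps) that your proposed invariants discard. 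To fix your proof you would need to identify $\Skip(F)$, or something of comparable strength, rather than the mod-$2$ clasp/homology package.
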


\begin{figure}[h]
     \centering\begin{subfigure}[b]{0.3\textwidth}
         \centering
         \begin{tikzpicture}
        \node at (0,.2){\includegraphics[width=.8\textwidth]{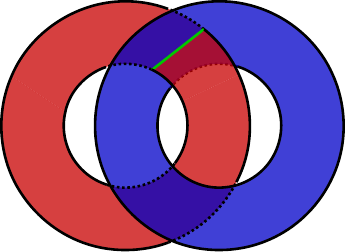}};
         \end{tikzpicture}
     \caption{A 2-colored link $L$ admitting a pair of Seifert surfaces intersecting in a ribbon.}
     \label{fig: other example 0}
     \end{subfigure}
         \hfill
     \begin{subfigure}[b]{0.3\textwidth}
         \centering
         \begin{tikzpicture}
        \node at (0,.2){\includegraphics[width=.8\textwidth]{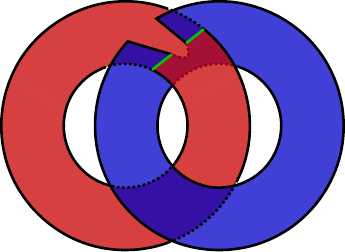}};
         \end{tikzpicture}
     \caption{A C-complex $F$ obtained by splitting the ribbon into clasps.}
     \label{fig: other example 1}
     \end{subfigure}
         \hfill
     \begin{subfigure}[b]{0.3\textwidth}
         \centering
         \begin{tikzpicture}
        \node at (0,.2){\includegraphics[width=.8\textwidth]{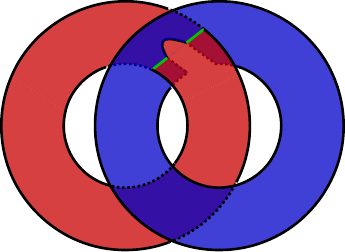}};
         \end{tikzpicture}
     \caption{A C-complex $F'$ obtained by splitting the ribbon into clasps in a different way.}
     \label{fig: other example 2}
     \end{subfigure}
     \hfill
        \caption{
         }
        \label{fig: other example}
\end{figure}

Thus, a correct statement of Lemma~\ref{false lemma} will require an additional move.  Such a move is suggested in Figure~\ref{fig: other example}.  These two C-complexes are realized by starting with a pair of surfaces sharing a ribbon intersection and pushing along an arc to split that ribbon into a pair of clasps in two different ways.  We introduce a new move which we call \ref{move: push along different arc} inspired by this example and prove the following.

\begin{theorem}\label{thm: corrected lemma}
Let $F$ and $F'$ be C-complexes bounded by isotopic colored links $L$ and $L'$. Then, $F$ and $F'$ can be transformed into each other by a finite sequence of the moves \ref{move: isotopy}, \ref{move: handle}, \ref{move: ribbon+push}, \ref{move: pass through clasp}, and 
\begin{enumerate}[label=(T\arabic*)] 
\setcounter{enumi}{3}
\item \label{move: push along different arc}\label{move:T4}  The ``push along a different arc" move: Merge two clasps into a ribbon by the inverse of a push along an arc and then push along a different arc, as in Figure \ref{fig: Move (T4)}.

\end{enumerate}
\end{theorem}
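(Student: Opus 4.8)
The plan is to reproduce the architecture of Cimasoni's argument for \cite[Lemmas 2 and 3]{CC}, pinpoint the one step at which it breaks down, and check that the new move \ref{move: push along different arc} is exactly what is needed to repair it. The reduction from ``isotopic links'' to ``equal links'' is free: an ambient isotopy of $S^3$ carrying $L$ onto $L'$ carries $F$ to a C-complex bounded by $L'$, so after a single use of \ref{move: isotopy} we may assume $L=L'$, and the problem becomes that of relating two C-complexes $F$ and $F'$ bounded by one fixed colored link.

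For this I would bring each of $F$ and $F'$ into a common normal form, so that it suffices to connect an arbitrary C-complex for $L$ to a fixed reference C-complex. Doing so forces one to resolve three kinds of ambiguity. First is the choice, color by color, of the Seifert surface $F_i$ bounded by the sublink $L_i$: by the classical fact that any two Seifert surfaces of a link in $S^3$ become ambient isotopic after finitely many stabilizations, this ambiguity is absorbed by \ref{move: handle} together with \ref{move: isotopy}. Second is the placement of the clasp intersections, both relative to the surfaces and relative to one another, which is governed by \ref{move: ribbon+push} and \ref{move: pass through clasp}. Third --- the ambiguity overlooked in \cite{CC} --- is that a configuration of surfaces meeting in a ribbon intersection admits two genuinely inequivalent resolutions into a pair of clasps, a difference to which none of \ref{move: isotopy}--\ref{move: pass through clasp} is sensitive; equalizing such a choice is, by its very definition, a single application of \ref{move: push along different arc}. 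The C-complexes of Figures~\ref{fig: other example 1} and \ref{fig: other example 2} exhibit precisely this third ambiguity in its minimal form, which is why \ref{move: push along different arc} is unavoidable.

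I expect the main obstacle to be the bookkeeping that allows these three resolutions to be performed together. The modifications of $F_i$ dictated by the classical Seifert theory must be carried out without dragging $F_i$ across the surfaces of the other colors, which would create intersections recorded by none of the moves; one arranges this by first pushing all clasps into a ball disjoint from the region where $F_i$ is being altered --- which is possible using \ref{move: ribbon+push} and \ref{move: pass through clasp} --- and only then invoking the classical result. Verifying that this separation is always attainable, enumerating the finitely many local pictures in which a ribbon resolution must be switched, and confirming that once all three ambiguities are resolved one has literally reached the reference C-complex: this is the technical heart of the argument, and exactly the point where the original proof in \cite{CC} is incomplete.
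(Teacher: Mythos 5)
Your high-level taxonomy of the sources of ambiguity (Seifert-surface choice, clasp placement, ribbon resolution) is a useful way to see \emph{why} a move like \ref{move: push along different arc} must appear, and your first reduction --- stabilize so that each $F_i$ is isotopic to $F_i'$, handled by \ref{move: handle} and \ref{move: isotopy} via Cooper's stabilization theorem --- matches the paper's Lemma~\ref{lem:handle}. But the proof strategy you sketch from there (normalize both C-complexes to a fixed reference and argue that each ambiguity is independently absorbed by one move) is not the paper's strategy, and it has gaps serious enough that I do not think it can be made to work without essentially reinventing what the paper actually does.

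The central gap is that once the $F_i$'s are isotopic to the $F_i'$'s, the hard part is \emph{not} reducing to a normal form: it is tracking what happens to $\cup_i F_i^t$ as each $F_i$ moves through its own isotopy. Those isotopies are not independent, and along the way $F_i^t\cap F_j^t$ will generically develop not only clasps but also \emph{ribbon} arcs, \emph{circle} components, and \emph{triple points} with a third surface $F_k^t$, i.e.\ one leaves the category of C-complexes entirely. Your proposal says nothing about circles or triple points; the paper spends most of Section~\ref{sect: corrected lemma} (surface systems, the five critical-point types, the push-along-an-arc move, Propositions~\ref{prop: get ambient isotopy}, \ref{lem: no interior CP's}, \ref{pairingcp}) setting up a Cerf-theoretic analysis of the trace $\bbF_i\subset S^3\times[0,1]$ precisely to control these degenerations, eliminate circles and triple-point lifespans, and reduce every remaining critical-time pair to one of \ref{move: ribbon+push}, \ref{move: pass through clasp}, \ref{move: push along different arc}, or an isotopy (Figures~\ref{fig: ribbon appears}--\ref{fig: Find T3 move}). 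That analysis \emph{is} the proof; a plan that says ``resolve the clasp-placement ambiguity by \ref{move: ribbon+push} and \ref{move: pass through clasp}'' does not yet engage with it.

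A second gap is the specific claim that one can ``push all clasps into a ball disjoint from the region where $F_i$ is being altered'' using \ref{move: ribbon+push} and \ref{move: pass through clasp}. Neither move translates an existing clasp: \ref{move: ribbon+push} creates or cancels a pair of clasps joined through a ribbon, and \ref{move: pass through clasp} slides a \emph{sheet} of one surface across a clasp, not the clasp itself. Moreover, the paper's own Lemma~\ref{lem: skip is invariant} shows that \ref{move:T0}--\ref{move:T3} preserve the nontrivial invariant $\Skip(F)$, so these moves cannot freely rearrange the clasp pattern the way your separation step requires. In fact, for the stabilization step no clasp-shuffling is needed at all: the arc guiding the hollow handle is $1$-dimensional, so a generic perturbation already makes it disjoint from the $2$-dimensional surfaces $F_j$, $j\neq i$ --- this is what the proof of Lemma~\ref{lem:handle} does. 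Finally, the proposal never says what the ``common normal form'' is, nor why two C-complexes both in normal form would coincide or be related by the listed moves; without that, the reduction you propose has nothing to land on.
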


\begin{figure}
     \centering
     \begin{subfigure}[b]{0.25\textwidth}
         \centering
         \begin{tikzpicture}
         \node at (0,0){\includegraphics[width=.75\textwidth]{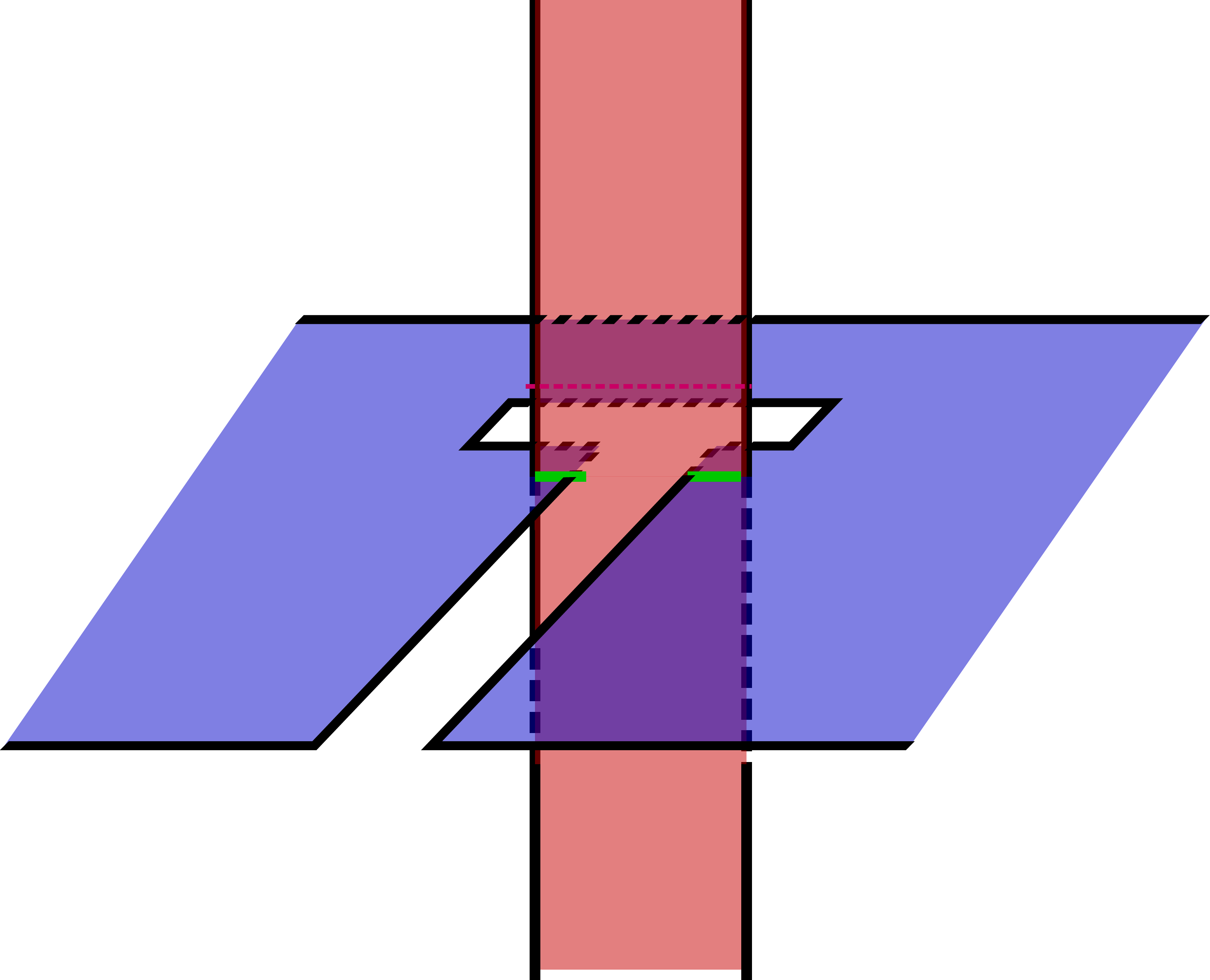}};
         \end{tikzpicture}
         \caption{}
         \label{fig: (T4)A}
     \end{subfigure}
     \begin{subfigure}[b]{0.25\textwidth}
         \centering
         \begin{tikzpicture}
        \node at (0,0){\includegraphics[width=.75\textwidth]{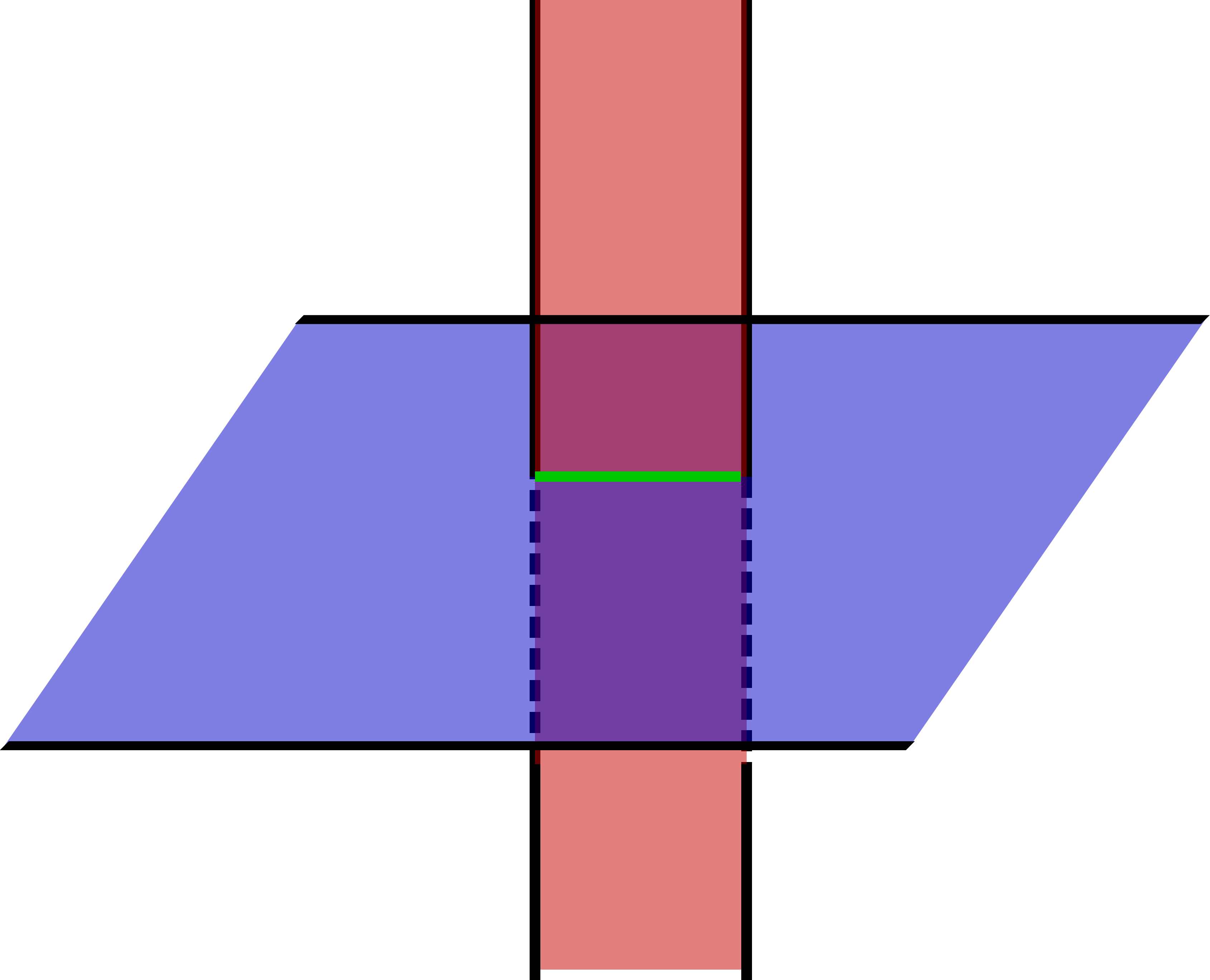}};
         \end{tikzpicture}
         \caption{}
         \label{fig: (T4)B}
     \end{subfigure}
     \begin{subfigure}[b]{0.25\textwidth}
         \centering
         \begin{tikzpicture}
        \node at (0,0){\includegraphics[width=.75\textwidth]{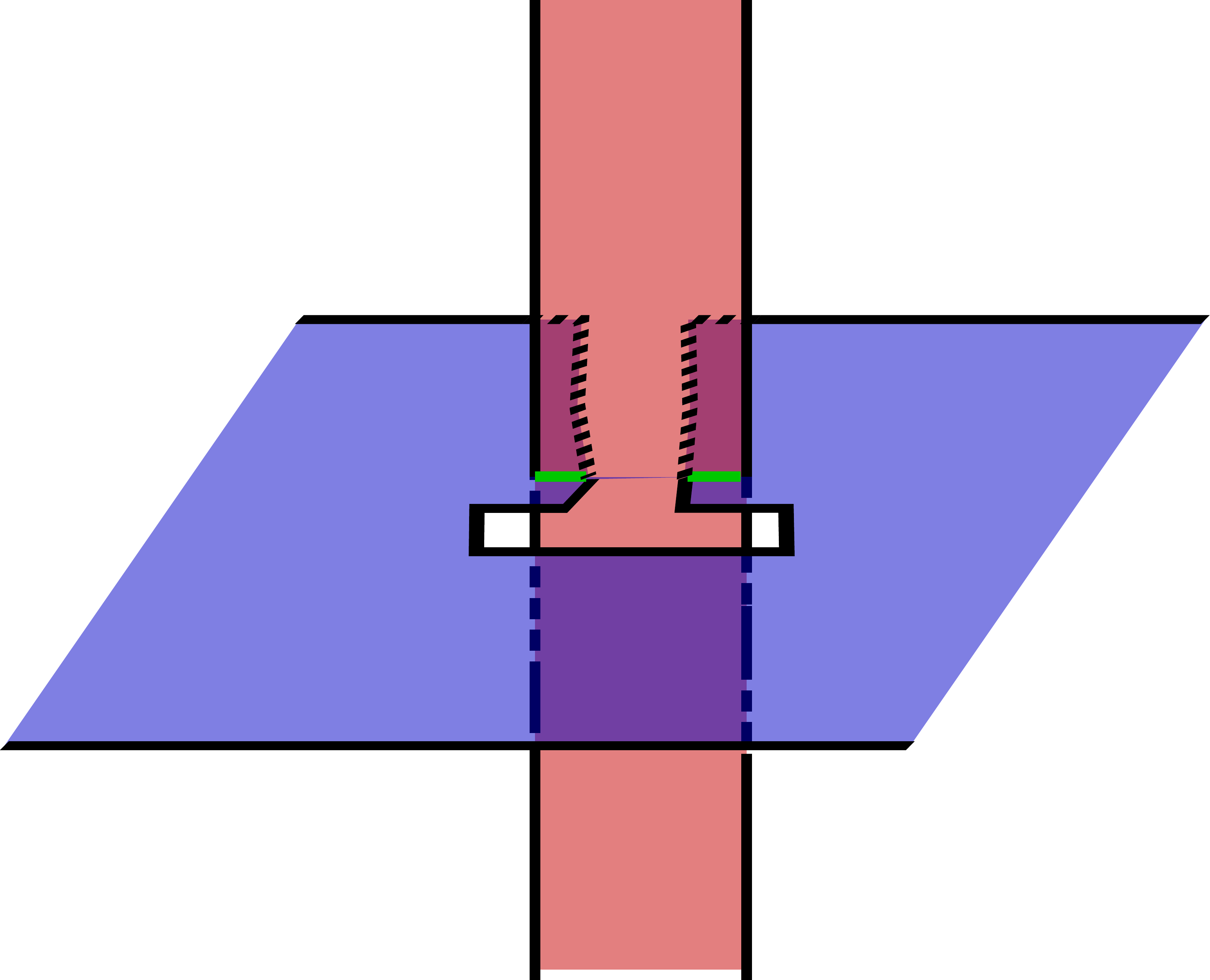}};
         \end{tikzpicture}
         \caption{}
         \label{fig: (T4)C}
     \end{subfigure}
     
        \caption{The ribbon intersection of \ref{fig: (T4)B} may be split into two clasps along any arc running from that ribbon to the boundary (\ref{fig: (T4)A} and \ref{fig: (T4)C} give two such choices).  The \ref{move: push along different arc} move allows the replacement of \ref{fig: (T4)A} by \ref{fig: (T4)C}. }
        \label{fig: Move (T4)}
\end{figure}

\subsection{Outline of paper}
In Section~\ref{sect:false lemma}, we formally define the notion of a C-complex, prove Theorem~\ref{thm: false lemma is false}, and present an infinite family of counterexamples to  Lemma~\ref{false lemma}. We also pose some questions gauging the degree to which this lemma is false.  In Section~\ref{sect:(T4) consequences}, we assume Theorem~\ref{thm: corrected lemma} and complete proofs that Cimasoni's geometric construction of the multivariable Conway potential function for colored links does still hold, as does the invariance of the signature and nullity of Cimasoni and Florens \cite{CF}.  In Section~\ref{sect: corrected lemma}, we present a careful, detailed proof of Theorem~\ref{thm: corrected lemma}.  

\subsection{Acknowledgements}
The authors would like to thank D.~Cimasoni for helpful conversations and encouragement during the progress of this project. They would also like to thank S.~Friedl for helpful conversations.

\section{C-complexes and the counterexample to Lemma~\ref{false lemma}}\label{sect:false lemma}

\begin{definition}For an $n$-colored link 
$L=L_1\cup\dots\cup L_n$, a \emph{C-complex} for $L$ consists of a union of oriented compact, smoothly embedded surfaces $F=F_1\cup\dots \cup F_n$ intersecting transversely in $S^3$ so that the following conditions hold:
\begin{itemize}
\item Each $F_i$ is a Seifert surface with no closed components for the $i$-colored sublink $L_i \subseteq L$. 
\item For any $i\neq j$, $F_i\cap F_j$ consists of a disjoint union of embedded arcs each having one endpoint in $\bdry F_i=L_i$ and the other in $\bdry F_j=L_j$.  These arcs are called clasps; see Figures~\ref{fig: positive clasp} and \ref{fig: negative clasp}.
\item There are no triple points;  for any $i<j<k$, $F_i\cap F_j\cap F_k=\emptyset$.
\end{itemize}
\end{definition}

\begin{figure}
     \centering
     \begin{subfigure}[b]{0.25\textwidth}
         \centering
         \begin{tikzpicture}
        \node at (0,0){\includegraphics[width=.75\textwidth]{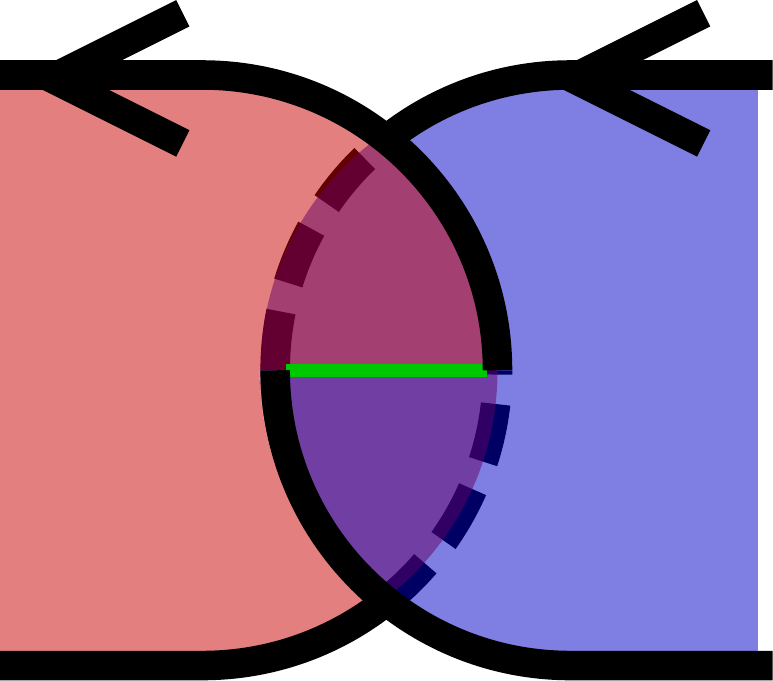}};
         \end{tikzpicture}
         \caption{A positive clasp}
         \label{fig: positive clasp}
     \end{subfigure}
     \begin{subfigure}[b]{0.25\textwidth}
         \centering
         \begin{tikzpicture}
        \node at (0,0){\includegraphics[width=.75\textwidth]{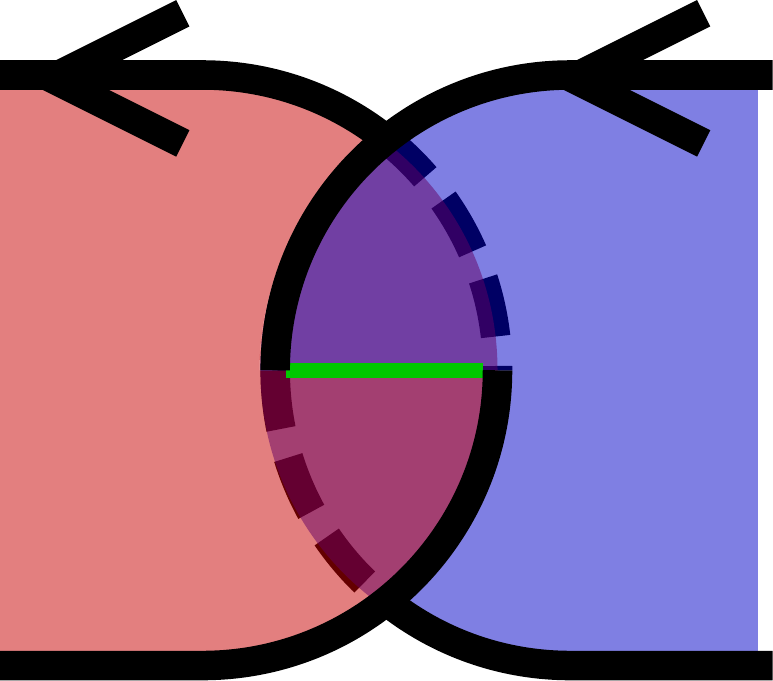}};
         \end{tikzpicture}
         \caption{A negative clasp}
         \label{fig: negative clasp}
     \end{subfigure}
     \begin{subfigure}[b]{0.25\textwidth}
         \centering
         \begin{tikzpicture}
        \node at (0,0){\includegraphics[width=.75\textwidth]{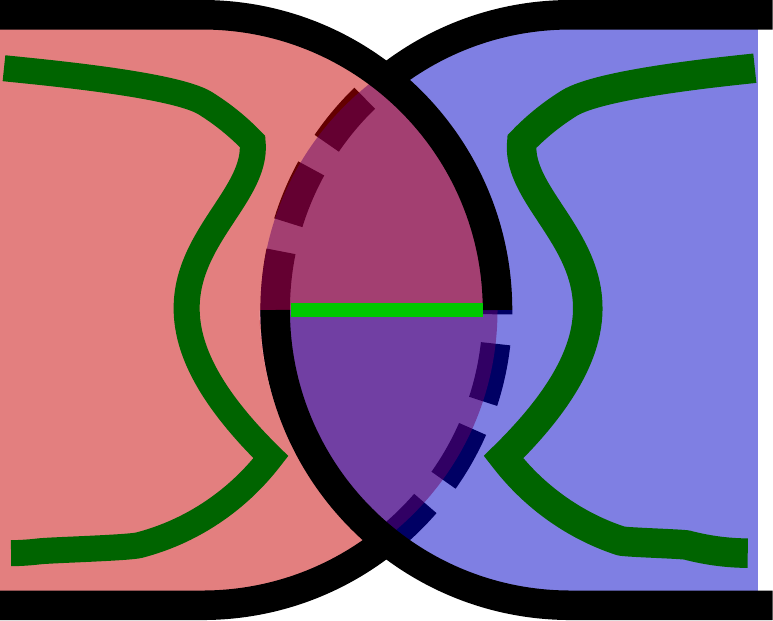}};
         \end{tikzpicture}
         \put(0,5){$L_i$}\put(0,60){\textcolor{green1}{$L_i'$}}
         \caption{$\Skip(F)$ near a clasp}
         \label{fig: skip}
     \end{subfigure}
     
        \caption{}
        \label{fig: clasps}
\end{figure}

Given a C-complex $F$, one can produce a new link, which we call the \emph{skip the clasps link}, and denote $\Skip(F)$.  In order to do so, let $A_i\subseteq F_i$ be a collar neighborhood of $\bdry F_i$ large enough to contain all of the clasps.  Observe that $A_i$ consists of a disjoint union of embedded annuli $S^1\times[0,1]$ bounded by the $i$-colored sublink $L_i$ and some other link, $L_i'$.  Set $\Skip(F) = L_1'\cup\dots\cup L_n'$.    A picture of $\Skip(F)$ near a clasp appears in Figure~\ref{fig: skip}. We show that $\Skip(F)$ is invariant under the moves \ref{move: isotopy}, \ref{move: handle}, \ref{move: ribbon+push},  and \ref{move: pass through clasp} of Lemma~\ref{false lemma}.  

\begin{lemma}\label{lem: skip is invariant}
Let $F$ and $F'$ be C-complexes which differ by a finite sequence of the moves of Lemma~\ref{false lemma}. Then, $\Skip(F) \cong \Skip(F')$.
\end{lemma}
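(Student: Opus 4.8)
The plan is to show that $\Skip$ is unchanged, up to isotopy of colored links, by each of the moves \ref{move: isotopy}, \ref{move: handle}, \ref{move: ribbon+push}, \ref{move: pass through clasp} individually; iterating along a sequence of moves then gives the lemma. First I would reformulate $\Skip(F)$. Call a collar $A_i$ of $\bdry F_i$ in $F_i$ \emph{admissible} if it contains all the clasps of $F$, and put $F_i'' = F_i \setminus \int(A_i)$. Then $F_i''$ is a Seifert surface for $L_i'$, the surfaces $F_1'',\dots,F_n''$ are pairwise disjoint because every clasp has been deleted, and $\Skip(F) = \bigcup_i \bdry F_i''$. Up to isotopy this link does not depend on the admissible collars chosen: any two admissible collars of $\bdry F_i$ can be enlarged inside a common one, giving an isotopy of $L_i'$ in $S^3$, and these may be run over all $i$ at once since the $F_i''$ stay disjoint throughout. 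So in checking a move $F \to F'$ we are free to compute $\Skip(F)$ and $\Skip(F')$ from whatever admissible collars are convenient.

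Moves \ref{move: isotopy} and \ref{move: handle} are immediate. An ambient isotopy $\Phi$ of $S^3$ carrying $F$ to $F'$ carries an admissible collar of $\bdry F_i$ to an admissible collar of $\bdry F_i'$, hence carries $\Skip(F)$ to $\Skip(F')$. A handle attachment changes a single $F_i$ in its interior, in a region disjoint from all clasps and from the other surfaces; choosing $A_i$ admissible and disjoint from the handle, the collars of the other surfaces are unaffected while $F_i''$ merely acquires a handle in its interior, leaving $\bdry F_i'' = L_i'$ unchanged. Hence $\Skip(F') = \Skip(F)$.

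The substance lies in \ref{move: ribbon+push} and \ref{move: pass through clasp}, which are local: $F$ and $F'$ agree outside a ball $B$ (and near $\bdry B$) and inside $B$ differ as in Figures~\ref{fig: CF (T2)} and \ref{fig: CF (T3)}. The point of these pictures is that every part of $F'\cap B$ differing from $F$ -- the added ribbon, the finger created by the push along an arc, and the two clasps it produces for \ref{move: ribbon+push}; the finger threaded through the clasp for \ref{move: pass through clasp} -- lies either in the collar region of the surface carrying it or in the interior of a surface away from every clasp. In the first case I would enlarge the collar of that surface so as to absorb the feature, keeping it admissible for $F'$ and unchanged outside $B$; in the second I would choose the collar to miss it. Either way $(F_i')'' = F_i'\setminus\int(A_i')$ agrees with $F_i''$ outside $B$ and, inside $B$, differs from it only by the deletion of this extra material; reading off the standard local pictures in Figures~\ref{fig: CF (T2)}, \ref{fig: CF (T3)}, and \ref{fig: skip}, one sees that $\bdry(F_i')''$ is obtained from $\bdry F_i'' = L_i'$ by an isotopy supported near $B$, namely by pushing $L_i'$ slightly deeper into $F_i$ past the new material. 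Running these isotopies over all colors simultaneously (they are supported near $B$, and the $F_i''$ are disjoint) gives $\Skip(F') \cong \Skip(F)$.

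The main obstacle I anticipate is that last point for \ref{move: ribbon+push} and \ref{move: pass through clasp}: confirming that the finger and the new clasps created by a push along an arc really do fit inside a genuine product collar of $\bdry F_i'$ that still contains all clasps and agrees with $A_i$ away from $B$. This is visually evident from the figures but is exactly where the argument needs to be pinned down. The remaining points -- that no surface other than those visibly involved in the move changes, and that the per-color isotopies of the skip curves assemble into a single ambient isotopy of $S^3$ -- are routine bookkeeping.
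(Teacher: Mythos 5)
Your proposal is correct and takes essentially the same approach as the paper: a move-by-move check where \ref{move: isotopy} and \ref{move: handle} are handled by observing that collars are carried along or preserved, and \ref{move: ribbon+push} and \ref{move: pass through clasp} are handled by inspecting the local pictures (the paper's Figures~\ref{fig: (T2) preserves} and~\ref{fig: (T3) preserves}), which is exactly the step you flag as needing to be pinned down. Your preliminary observation that $\Skip(F)$ is independent of the choice of admissible collar is a useful formalization that the paper leaves implicit, but it does not change the essential structure of the argument.
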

\begin{proof}
An ambient isotopy~\ref{move: isotopy} changes $\cup_i A_i$, and in particular $\Skip(F)$, by an isotopy.  A handle attachment \ref{move: handle} is performed away from a collar neighborhood of the boundary. Thus, it preserves $\cup_i A_i$ and so $\Skip(F)$.  The move \ref{move: ribbon+push} along with its effect on $\Skip(F)$ appears in Figure~\ref{fig: (T2) preserves}.  By inspection, \ref{move: ribbon+push} preserves the isotopy class of $\Skip(F)$.  Similarly in Figure~\ref{fig: (T3) preserves}, we see that  \ref{move: pass through clasp} preserves $\Skip(F)$.  \end{proof}

\begin{figure}[h]
     \centering\begin{subfigure}[b]{0.4\textwidth}
         \centering
         \begin{tikzpicture}
        \node at (0,.3){\includegraphics[width=.5\textwidth]{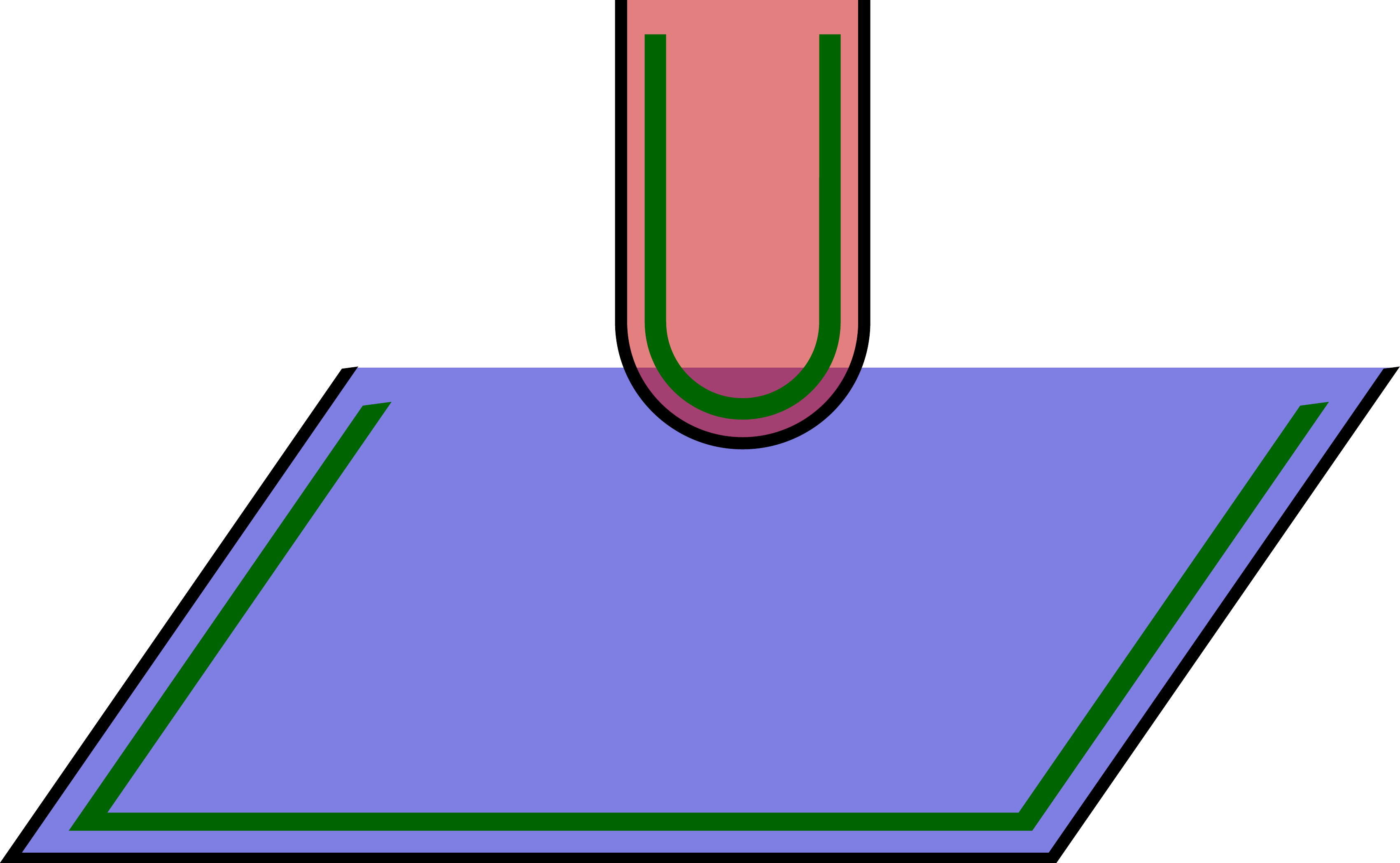}};

        \node at (3.5,0){\includegraphics[width=.5\textwidth]{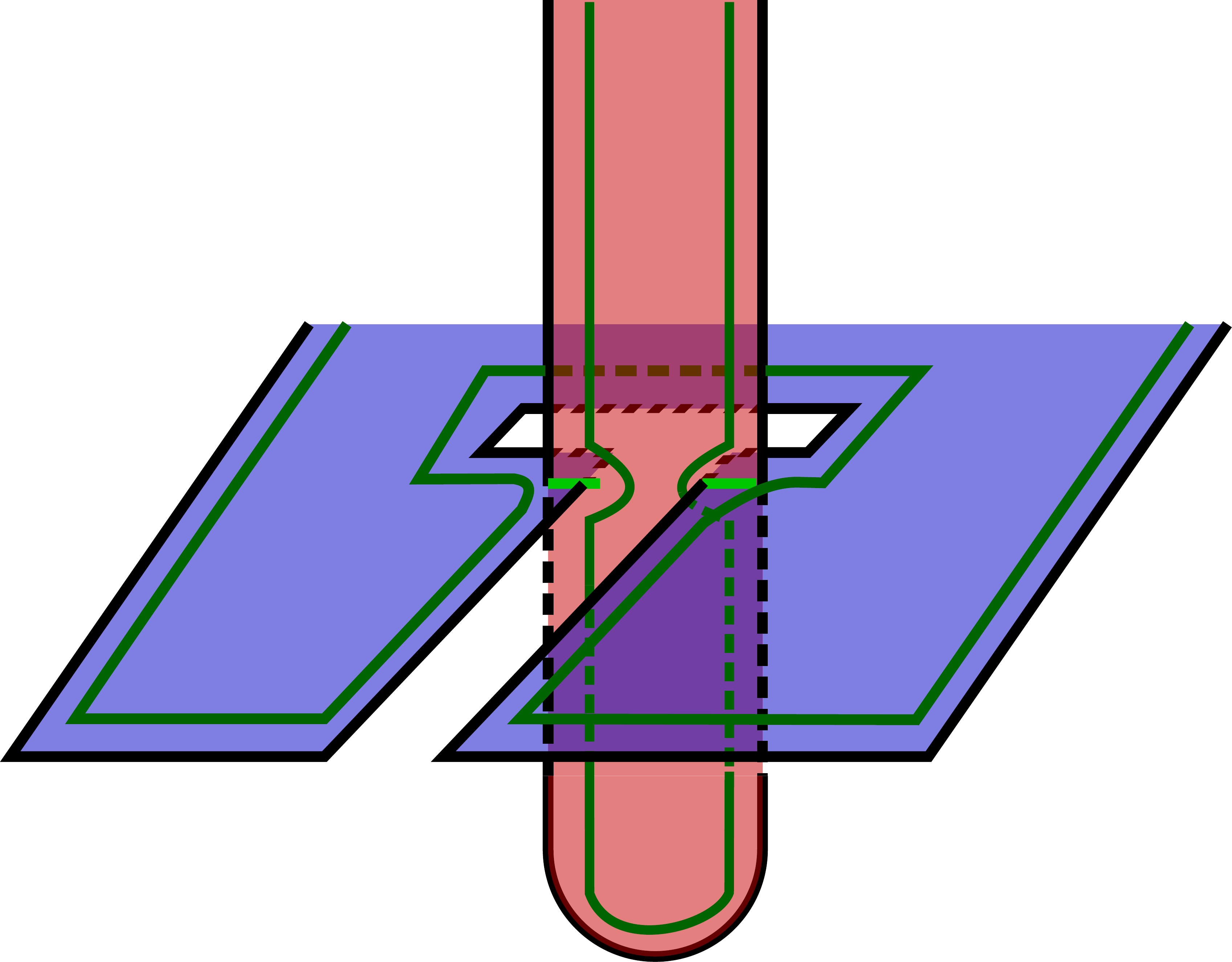}};
         \end{tikzpicture}
     \caption{$F$ and $F'$ related by a \ref{move: ribbon+push} move}
     \label{}
     \end{subfigure}     
     \begin{subfigure}[b]{0.4\textwidth}
         \centering
         \begin{tikzpicture}
        \node at (0,.3){\includegraphics[width=.5\textwidth]{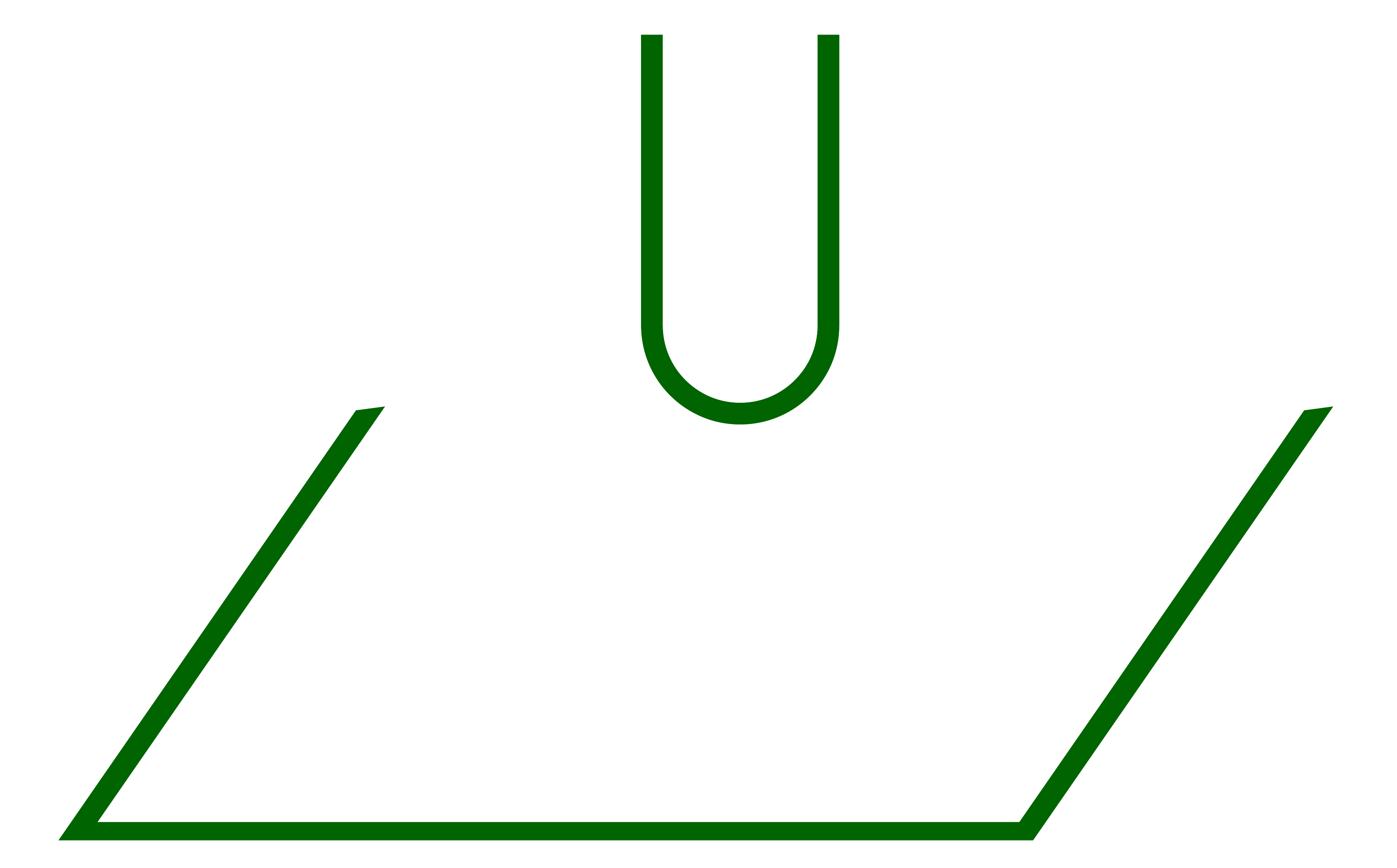}};

        \node at (3.5,0){\includegraphics[width=.5\textwidth]{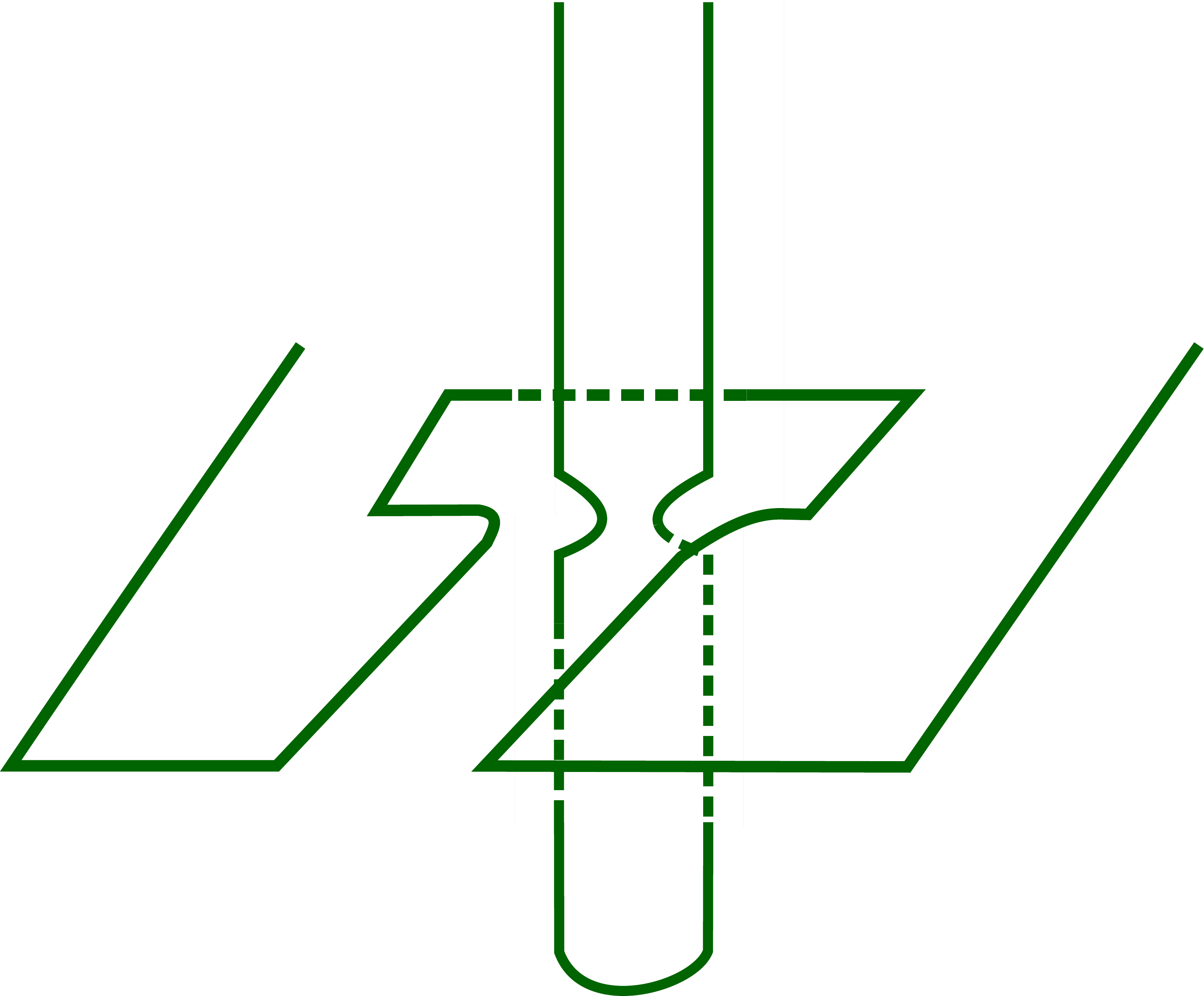}};
         \end{tikzpicture}
     \caption{$\Skip(F)$ and $\Skip(F')$}
     \label{}
     \end{subfigure}
     \hfill
        \caption{$\Skip(F)$ is preserved by the \ref{move: ribbon+push} move.}
        \label{fig: (T2) preserves}
\end{figure}

\begin{figure}[h]
     \centering\begin{subfigure}[b]{0.4\textwidth}
         \centering
         \begin{tikzpicture}
        \node at (0,.2){\includegraphics[width=.5\textwidth]{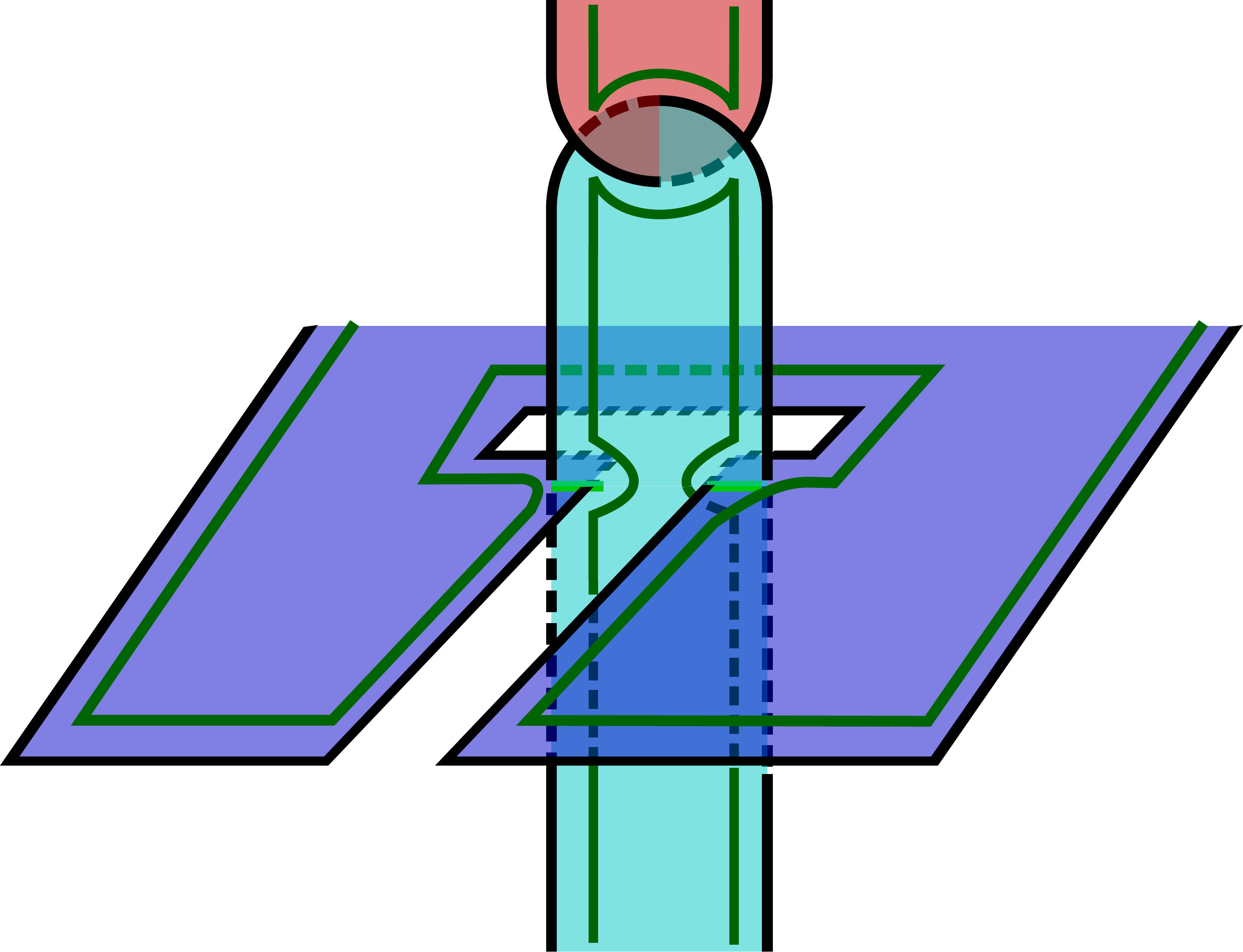}};

        \node at (3.5,0){\includegraphics[width=.5\textwidth]{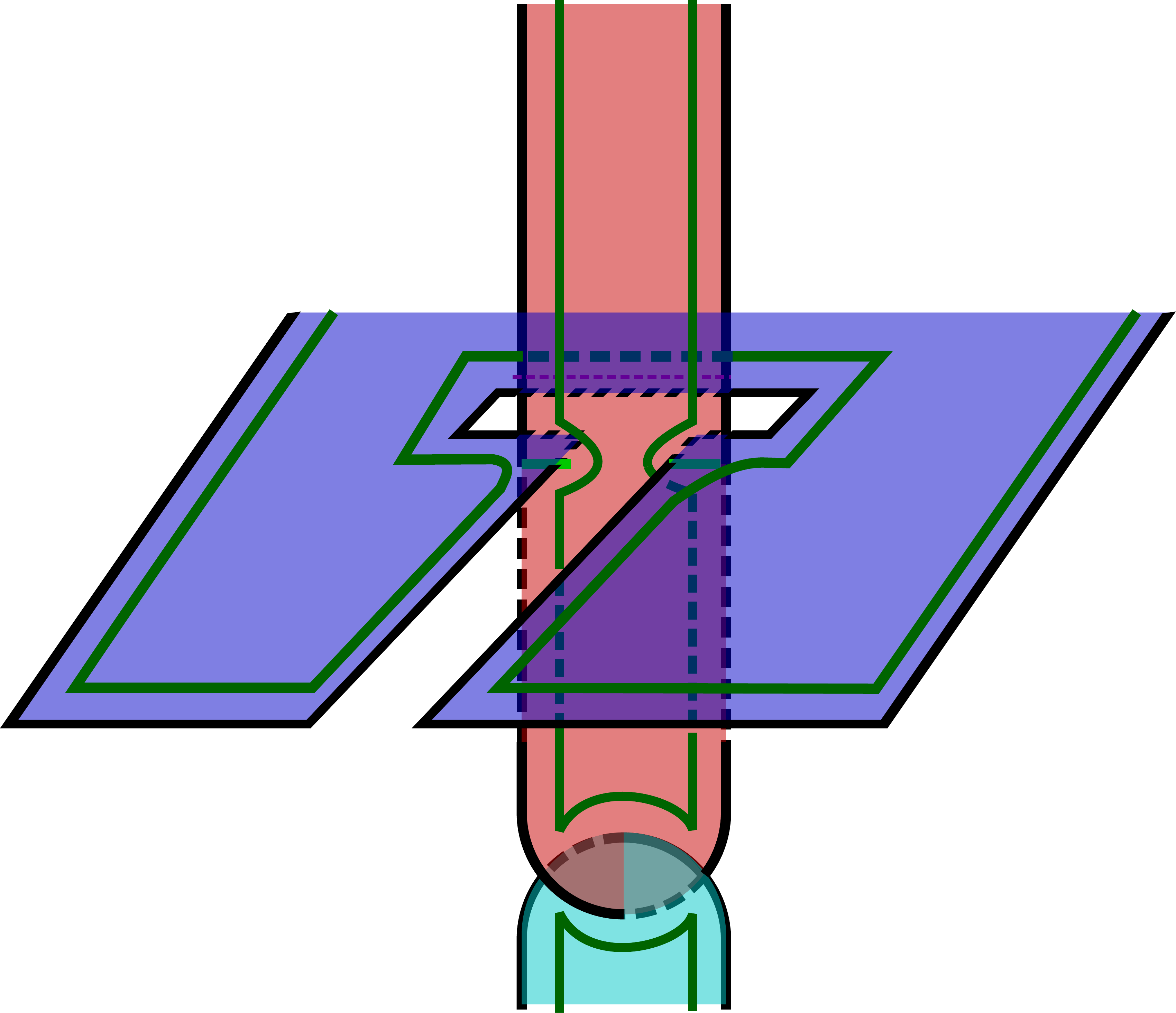}};
         \end{tikzpicture}
     \caption{$F$ and $F'$ related by a \ref{move: pass through clasp} move}
     \label{}
     \end{subfigure}
     \begin{subfigure}[b]{0.4\textwidth}
         \centering
         \begin{tikzpicture}
        \node at (0,.3){\includegraphics[width=.5\textwidth]{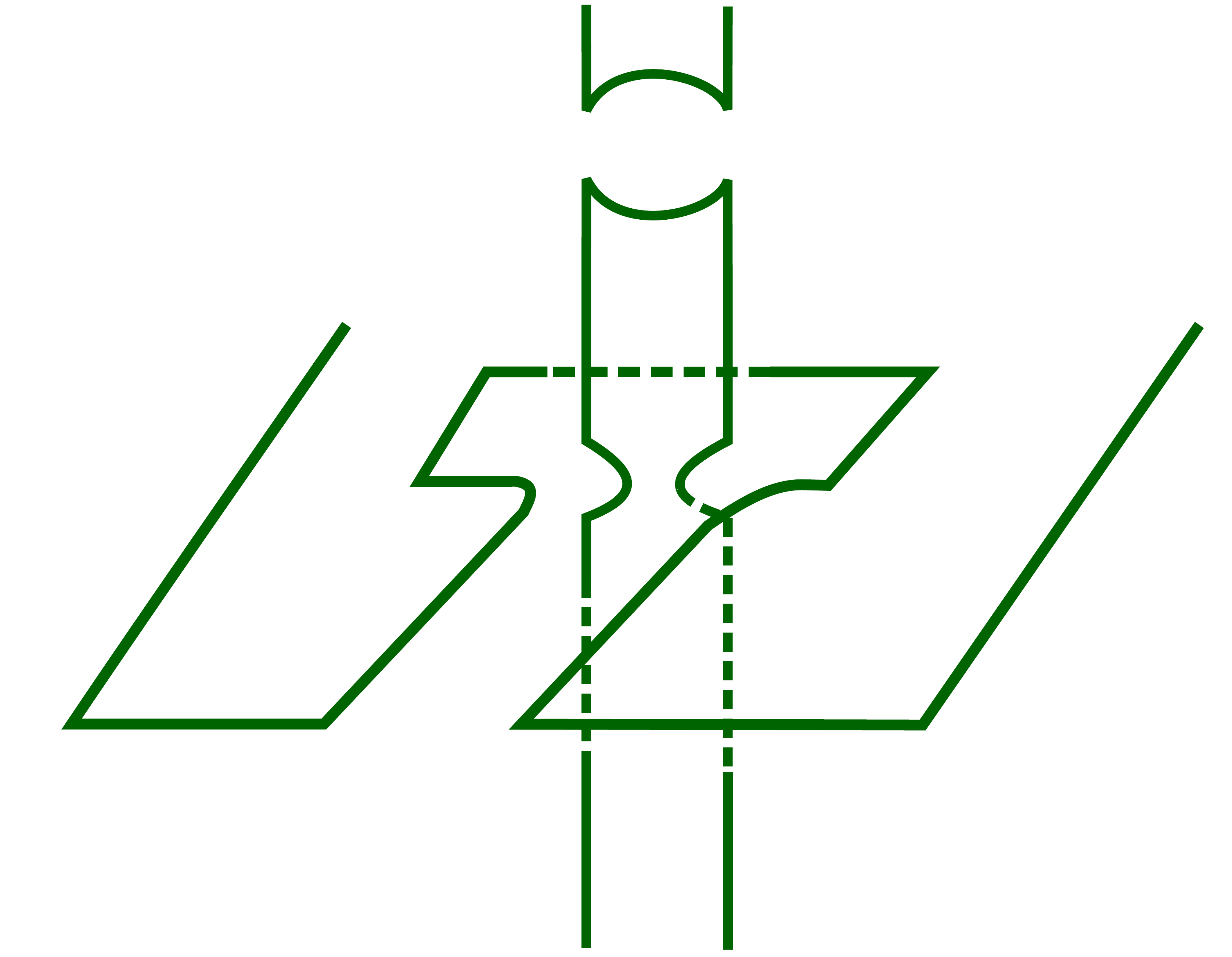}};

        \node at (3.5,0){\includegraphics[width=.5\textwidth]{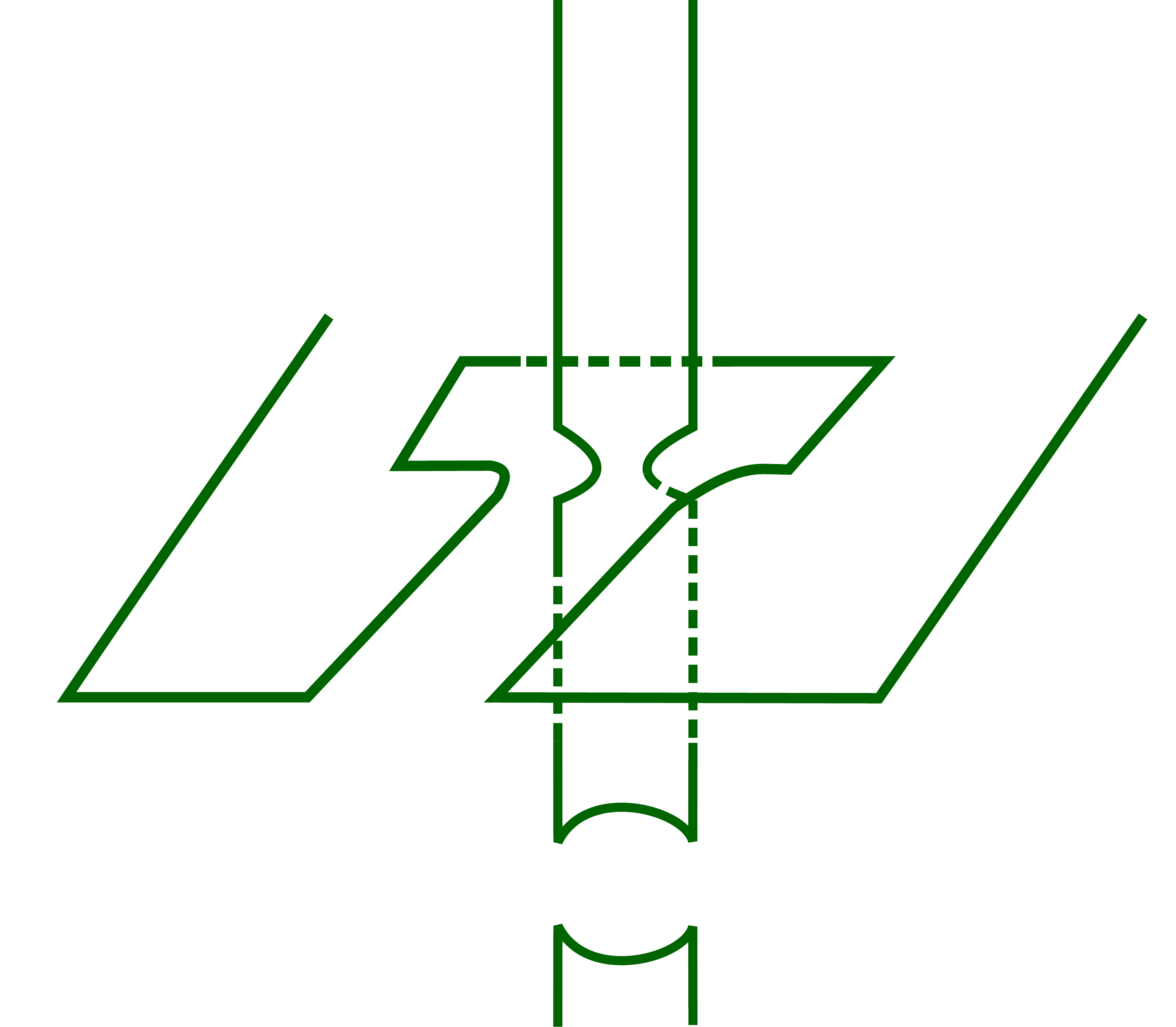}};
         \end{tikzpicture}
     \caption{$\Skip(F)$ and $\Skip(F')$}
     \label{}
     \end{subfigure}
     \hfill
        \caption{$\Skip(F)$ is preserved by the \ref{move: pass through clasp} move.}
        \label{fig: (T3) preserves}
\end{figure}

As a consequence of Lemma~\ref{lem: skip is invariant}, if Lemma~\ref{false lemma} were true, then $\Skip(F)$ would depend only on the link bounded by $F$;  $\Skip(F)$ would be be an invariant of colored links. We use this idea to present a counterexample to Lemma~\ref{false lemma}.  

\begin{proof}[Proof of Theorem~\ref{thm: false lemma is false}]

 Consider the C-complexes $F$ and $F'$ of Figure~\ref{fig: other example}.  In Figure~\ref{fig: Hopf example skip} we see $\Skip(F)$ and $\Skip(F')$.  As $\Skip(F)$ has non-vanishing linking numbers and $\Skip(F')$ is the unlink, Lemma~\ref{lem: skip is invariant} implies that $F$ and $F'$ are not related by any sequence of  the moves from Lemma~\ref{false lemma}.

\begin{figure}[h]
  \hfill
  \begin{subfigure}[b]{0.4\textwidth}
         \centering
\begin{tikzpicture}
    \node at (0,0) {\includegraphics[width=.4\textwidth]{HopfyExample1.pdf}};
    \node at (3.5,0) {\includegraphics[width=.4\textwidth]{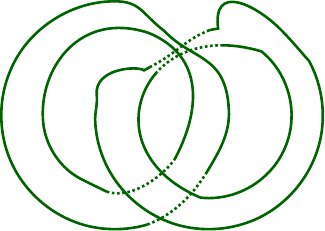}};
\end{tikzpicture}
     \caption{A C-complex $F$ for which $\Skip(F)$ is not the unlink.}
        \label{fig: Hopf Skip 1}
      \end{subfigure}
      \hfill
      \phantom{p}
  \begin{subfigure}[b]{0.4\textwidth}
         \centering
\begin{tikzpicture}
    \node at (0,0) {\includegraphics[width=.4\textwidth]{HopfyExample2.pdf}};
    \node at (3.5,0) {\includegraphics[width=.4\textwidth]{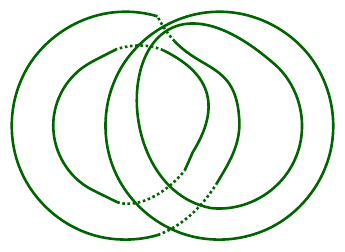}};
\end{tikzpicture}
     \caption{A C-complex $F'$ with for which $\Skip(F')$ is the unlink.}
        \label{fig: Hopf Skip 2}
      \end{subfigure}  
     \caption{Two C-complexes bounded by the same link with different skip the clasps links.}
         \label{fig: Hopf example skip}
\end{figure}

\end{proof}

\begin{example}\label{other example}

In order to evidence that counterexamples to Lemma~\ref{false lemma} abound, we present an infinite family.  Let $K$ be any knot and $L=L_1\cup L_2=BD(K)$ be the Bing double of $K$ realized as a 2-colored link.  In Figure~\ref{fig: Bing double ex} we see a C-complex $F=F_1\cup F_2$ for $L$, which satisfies that $\Skip(F)$ is the 2-component unlink.  On the other hand, $L=BD(K)$ is a boundary link, see for example \cite{Ci}, and so $L$ admits a C-complex $F'=F_1'\cup F_2'$ with no clasps.  Therefore, the collar neighborhood $A'$ appearing in the definition of $\Skip(F')$ is an embedded disjoint union of annuli and parametrizes an isotopy from $BD(K)$ to $\Skip(F')$.  Since $BD(K)$ is not isotopic to the unknot when $K$ is not unknotted, $\Skip(F)$ is not isotopic to $\Skip(F')$.  By Lemma~\ref{lem: skip is invariant}, $F$ and $F'$ are not related by any sequence of moves \ref{move: isotopy}, \ref{move: handle}, \ref{move: ribbon+push}, and \ref{move: pass through clasp}.
  
\end{example}

\begin{figure}[h]
  \centering\begin{subfigure}[b]{0.4\textwidth}
         \centering
\begin{tikzpicture}
    \node at (0,0) {\includegraphics[height=.15\textheight, angle=90]{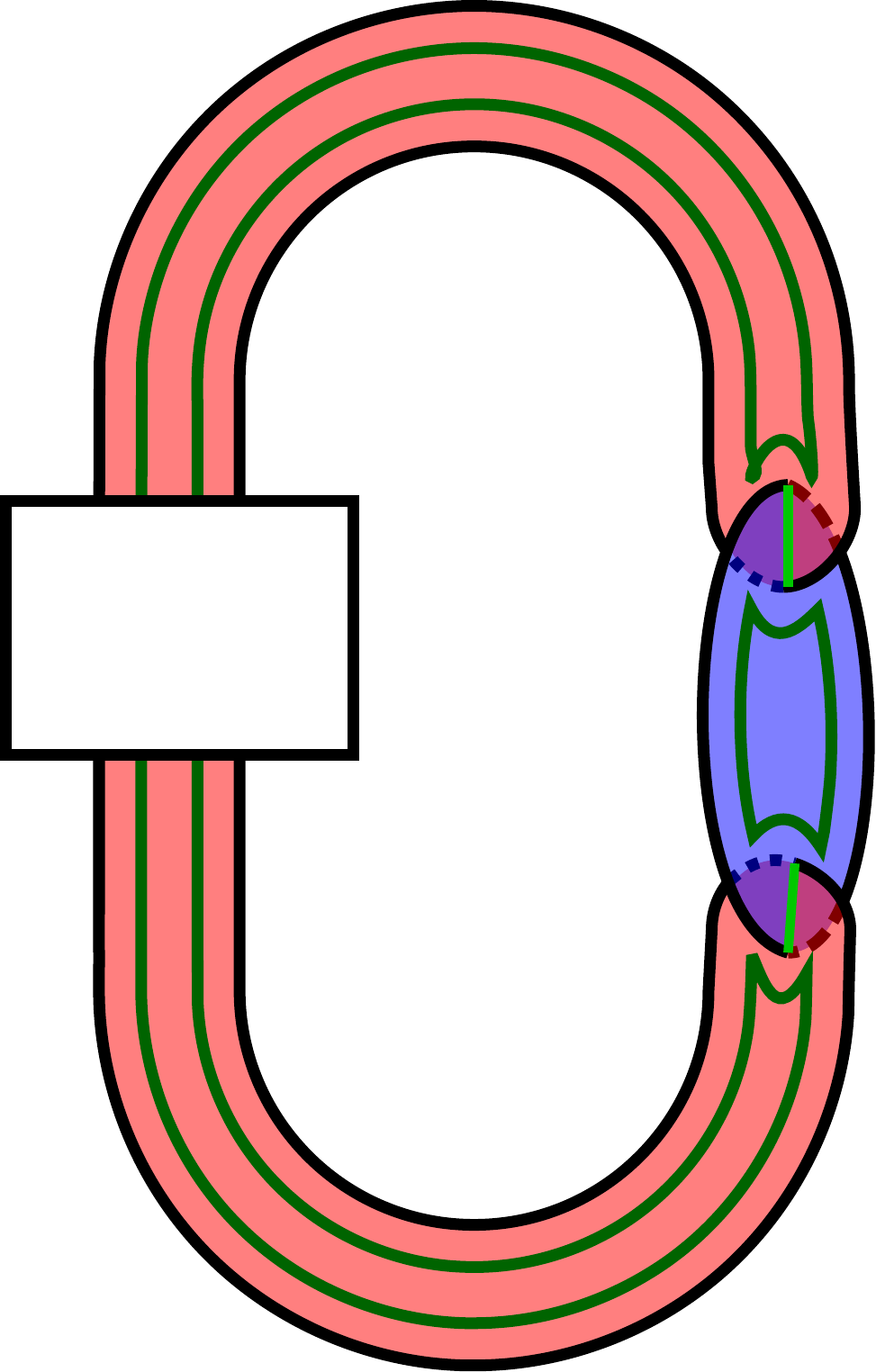}};
    \node at (-.15,-.6) {$K$};
\end{tikzpicture}
     \caption{}
        \label{fig: Bing double}
            \end{subfigure}
       \begin{subfigure}[b]{0.4\textwidth}
         \centering
         \begin{tikzpicture}
    \node at (0,0) {\includegraphics[height=.15\textheight, angle=90]{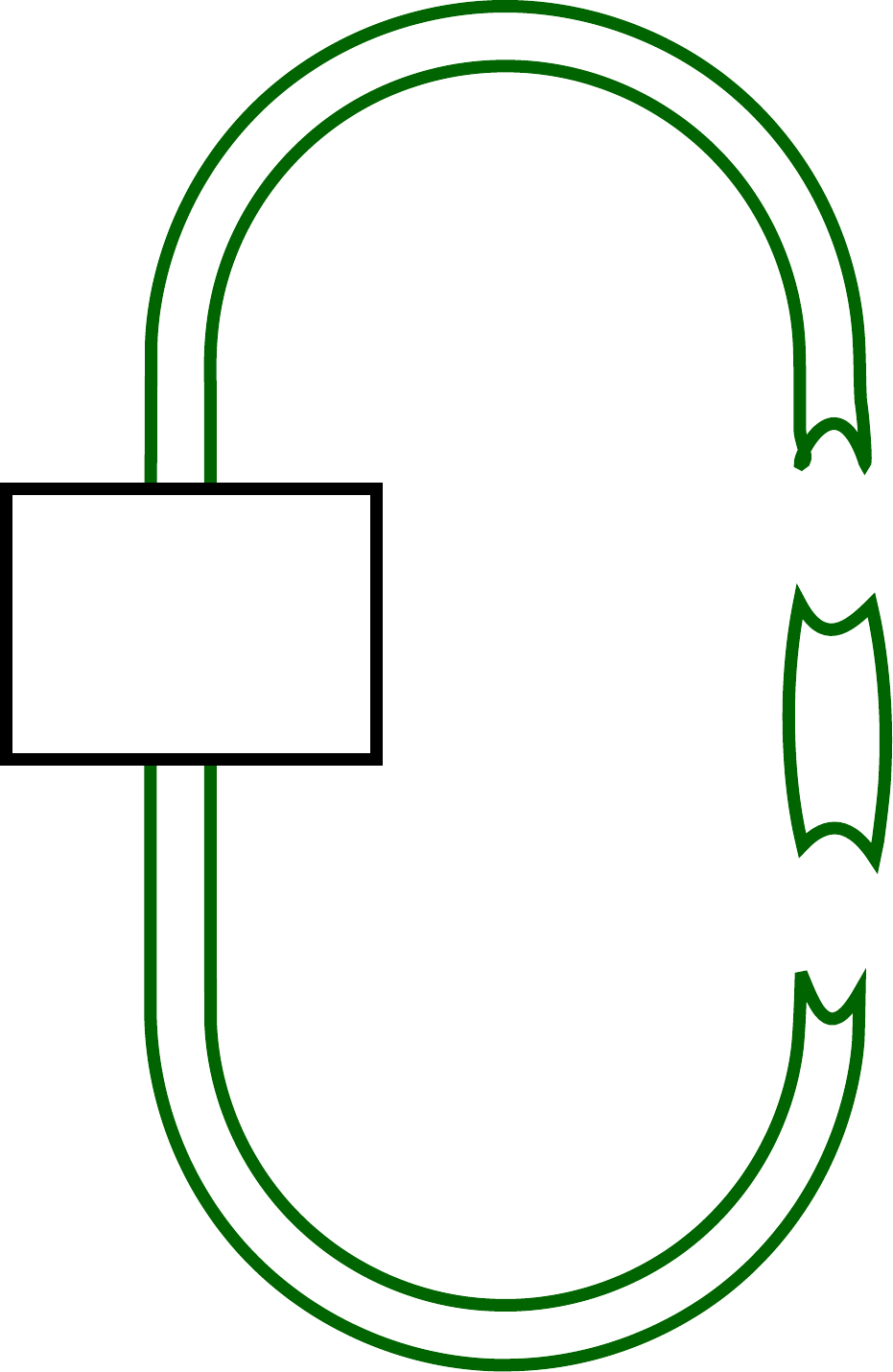}};
    \node at (-.15,-.6) {$K$};
\end{tikzpicture}
     \caption{}
        \label{fig: Bing double Skip}
            \end{subfigure}
     \hfill
     \caption{The Bing double of $K$ admits a C-complex $F$ for which $\Skip(F)$ is the unlink.}
         \label{fig: Bing double ex}
\end{figure}

We conclude this section with the following questions about the skip the clasps link, justifying further study.  

\begin{questions}
For any colored link $L$, set $\Skip(L) = \{\Skip(F)\mid F\text{ is a C-complex for }L\}$.  Does $\Skip(L)$ contain a split link for every $L$?  More generally, does $\Skip(L)$ contain every colored link $J$ bounding a C-complex with no clasps (known as a colored boundary link) and for which $J_i$ is isotopic to $L_i$ for all $i$?  Does there exist a link satisfying that $\Skip(L)$ contains only one link?  If $F$ and $F'$ are C-complexes for $L$ and  $\Skip(F)=\Skip(F')$ then are $F$ and $F'$ related by moves \ref{move: isotopy}, \ref{move: handle}, \ref{move: ribbon+push} and \ref{move: pass through clasp}?
\end{questions}

\section{Link invariants from C-complexes}\label{sect:(T4) consequences}

Lemma~\ref{false lemma}, which we have just seen to be false, is used in the papers \cite{CC}, \cite{CF}, and \cite{CimTur} to show that several quantities computed in terms of C-complexes are invariant under moves \ref{move: isotopy} through \ref{move: pass through clasp}, and thus are colored link invariants.  In \cite[Lemma 4]{CC}, this is done for a multivariable polynomial $\Omega_F$ associated to a C-complex $F$.  This polynomial is then shown to recover the Conway potential function. In \cite[Lemma 2.1]{CF}, this is done for the signature $\sigma_L$ and nullity $\eta_L$ functions appearing in that paper. In this section, we complete the proofs that $\Omega_F$, $\sigma_L$, and $\eta_L$ are invariants of colored links by verifying their invariance under the \ref{move: push along different arc} move.  
{In \cite[Section 7]{CimTur} Cimasoni and Turaev introduce a Seifert triple associated to a C-complex and use Lemma~\ref{false lemma} to conclude that it is an invariant of colored links in quasi-cylinders.  While we do not provide the details here, one can check that their Seifert triple is preserved by \ref{move:T4}.}

\subsection{The multivariable Conway potential function}
\label{sect:Conway}

In \cite{CC}, Cimasoni provides a formula for the multivariable Conway potential function in terms of the linking numbers of curves sitting on a C-complex.  While the fact that this formula gives an invariant of links can be deduced from Hartley's proof that the fact that the multivariable Conway potential function is an invariant \cite{Hartley83}, one of the goals of \cite{CC} is a proof of invariance more in line with the argument of Kauffman \cite{Kauffman81} of the invariance of the one variable Conway potential function.  

We  complete Cimasoni's proof of invariance, assuming Theorem~\ref{thm: corrected lemma}.  We begin by recalling Cimasoni's formula for the potential function in terms of a C-complex.  Given an $n$-colored link $(L,\sigma)$ bounding a C-complex $F$ and any choice of $\epsilon:\{1,\dots, n\}\to \{\pm1\}$ one can define a linking matrix analogous to the Seifert matrix.  Let $\alpha$ be a curve in $F$ which interacts nicely with the clasps of $F$.  In \cite{CC} $\alpha$ is called a \emph{loop}, and the reader is directed to \cite{CC}, \cite{CF}, \cite{CoopThesis} or \cite{Cooper} for a more complete discussion; see in particular \cite[Figure 2]{CF}.  One can then define a pushoff $\alpha^\epsilon$ of $\alpha$ by pushing $\alpha$ off of each surface $F_i$ in the $\epsilon_i$-normal direction.  The condition that $\alpha$ is a loop is required to arrange that these pushoffs agree near the clasps.  After picking a basis $\{\alpha_i\}$ for $H_1(F)$ consisting of loops, the $\epsilon$-\emph{linking matrix} is the square matrix with $(i,j)$-entry $(A_F^\epsilon)_{i,j} = \lk(\alpha_i, \alpha_j^\epsilon)$.  Cimasoni \cite{CC} gives a  polynomial $\Omega_F$ in $n$ variables in terms of these linking matrices and proves that $\Omega_F$ recovers the multivariable Conway potential function \cite[Lemma 5]{CC}.

\begin{equation}\label{eqn:Omega}\Omega_F(t_1, \dots, t_n) = \sgn(F)\Prod_{i=1}^{n} (t_i - t_i\inv)^{\chi(F \bk F_i)-1} \det(-A_F).\end{equation}

Here, $A_F = \Sum_{\epsilon} \epsilon(1)\cdots \epsilon(n)\cdot t_1^{\epsilon(1)} \cdots t_n^{\epsilon(n)} \cdot A_F^{\epsilon}$ and $\text{sgn}(F)$ is the product of the signs of all clasps of $F$.

\begin{lemma}[Lemma 4 in \cite{CC}]
Let $F$ and $F'$ be two C-complexes for isotopic colored links $(L,\sigma)$ and $(L', \sigma ')$. Then, 
$\Omega_F$ and $\Omega_{F'}$ are equal. 
\end{lemma}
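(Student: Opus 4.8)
The plan is to leverage the structure already established in the paper: since Cimasoni already proved in \cite{CC} that $\Omega_F$ is invariant under the moves \ref{move: isotopy}, \ref{move: handle}, \ref{move: ribbon+push}, and \ref{move: pass through clasp}, and since Theorem~\ref{thm: corrected lemma} tells us that any two C-complexes for isotopic colored links differ by a sequence of these four moves together with the new move \ref{move: push along different arc}, it suffices to verify that $\Omega_F$ is unchanged by a single application of \ref{move: push along different arc}. So the entire content of the proof reduces to a local computation around the site where \ref{move: push along different arc} is performed, i.e.\ comparing the configurations in Figures~\ref{fig: (T4)A} and \ref{fig: (T4)C}.

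First I would set up the comparison. The move \ref{move: push along different arc} takes a C-complex $F$, merges two clasps into a single ribbon intersection (reversing a push along an arc), and then re-splits that ribbon into two clasps along a different arc, producing $F'$. I would fix a basis of loops for $H_1(F)$ and describe explicitly how it corresponds to a basis of loops for $H_1(F')$: the two surfaces, their Euler characteristics, and $\chi(F\bk F_i)$ are all unchanged by the move, and $\sgn(F)=\sgn(F')$ since the move merges two clasps and then creates two new clasps whose signs multiply to the same value (this needs to be checked from the figure — the re-splitting along a different arc does not change the product of clasp signs, as both the pre- and post-move clasp pairs arise from splitting the same oriented ribbon). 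So the only thing that could change in \eqref{eqn:Omega} is $\det(-A_F)$, and for that I would argue that for each $\epsilon$ the linking matrices $A_F^\epsilon$ and $A_{F'}^\epsilon$ are related by a change of basis, hence have the same determinant, so $\det(-A_F)=\det(-A_{F'})$ as Laurent polynomials.

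The key step, and the main obstacle, is to make precise the correspondence between loops on $F$ and loops on $F'$ and to show it induces (after possibly a base change) equal $\epsilon$-linking matrices. Concretely: a loop $\alpha$ on $F$ that does not pass near the modified region is literally also a loop on $F'$, with identical pushoffs $\alpha^\epsilon$, so its linking data is unchanged; the subtlety is with loops that do interact with the two clasps being modified. Here I expect to need the observation that the region where the move takes place is, up to isotopy of the ambient $S^3$, supported in a ball, and that merging the clasps to a ribbon and re-splitting does not alter the homology classes of curves passing through — one reroutes each affected loop across the ribbon in the corresponding way on $F'$. Because the push-along-an-arc operation (move \ref{move: ribbon+push}) and its inverse are already known to preserve $\Omega$, one natural route is to factor \ref{move: push along different arc} as far as possible through already-understood moves and isolate exactly the new phenomenon — but since by hypothesis \ref{move: push along different arc} is genuinely \emph{not} a consequence of the earlier moves, this factoring cannot be complete, and the residual step must be checked by hand from the local picture. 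I would therefore do the honest local computation: write down the finitely many basis loops supported near the modified clasps before and after, compute their mutual linking numbers and self-linking numbers for each $\epsilon$ directly from Figure~\ref{fig: Move (T4)}, and confirm the two matrices agree (or differ by an obvious unimodular base change), so that $\det(-A_F)=\det(-A_{F'})$ and hence $\Omega_F=\Omega_{F'}$.

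Finally I would assemble the pieces: invariance under \ref{move: isotopy}--\ref{move: pass through clasp} is \cite[Lemma 4]{CC}; invariance under \ref{move: push along different arc} is the local computation just described; and Theorem~\ref{thm: corrected lemma} guarantees these five moves connect any two C-complexes for isotopic colored links. Composing, $\Omega_F=\Omega_{F'}$ whenever $F$ and $F'$ bound isotopic colored links, which is the claim.
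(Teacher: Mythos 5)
Your proposal follows essentially the same route as the paper: invoke Theorem~\ref{thm: corrected lemma} together with Cimasoni's proof of invariance under \ref{move:T0}--\ref{move:T3}, observe that $\sgn(F)$ and the Euler characteristics in \eqref{eqn:Omega} are unchanged by \ref{move:T4}, and then do a local computation of the linking matrices to conclude $\det(-A_F)=\det(-A_{F'})$. One caveat on your intermediate heuristic: it is \emph{not} the case that each $A_F^\epsilon$ and $A_{F'}^\epsilon$ is related by a single $\Z$-change of basis of $H_1$; what the local computation actually reveals (and what the paper shows) is that the \emph{summed Laurent-polynomial matrices} $A_F$ and $A_{F'}$ differ by multiplying the first row by $t_1^{-2}$ and the first column by $t_1^2$ --- a unimodular operation over $\Z[t_1^{\pm1},\dots,t_n^{\pm1}]$ whose effect is to swap which sign of $\epsilon(1)$ contributes to the first row/column, not a uniform change of loop basis. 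Your promised ``honest local computation'' would uncover exactly this and yield $\det(-A_F)=\det(-A_{F'})$, so the argument goes through.
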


\begin{proof}

Cimasoni proves that $\Omega$ is preserved by the C-complex moves \ref{move: isotopy}, \ref{move: handle}, \ref{move: ribbon+push},  and \ref{move: pass through clasp}. We complete this proof by showing the move \ref{move: push along different arc} also preserves $\Omega$. 

The \ref{move: push along different arc} move preserves $\sgn(F)$ as well as the Euler characteristics appearing in equation \pref{eqn:Omega}. Thus, it suffices to show that $\det(-A_F) = \det(-A_{F'})$.  Without loss of generality, we assume this \ref{move: push along different arc} move is between $F_1$ and $F_2$ as in Figure~\ref{fig: A_F matrices}. 

We begin with a choice of basis elements for $H_1(F)$, taking care to choose a basis including $\alpha_1, \alpha_2, \alpha_3, $ and $\alpha_4$, as pictured in Figure \ref{fig:A_f}. We assign the positive side of the surfaces $F_1$ and $F_2$ to be facing outwards. We note the curve $\alpha_1$ is fully contained in the portion of surface $F_2$ pictured in Figure \ref{fig:A_f}. We now compute $A_F$ with respect to this basis.

\begin{figure}
     \centering
     \begin{subfigure}[b]{0.4\textwidth}
         \centering
         \includegraphics[scale=.3]{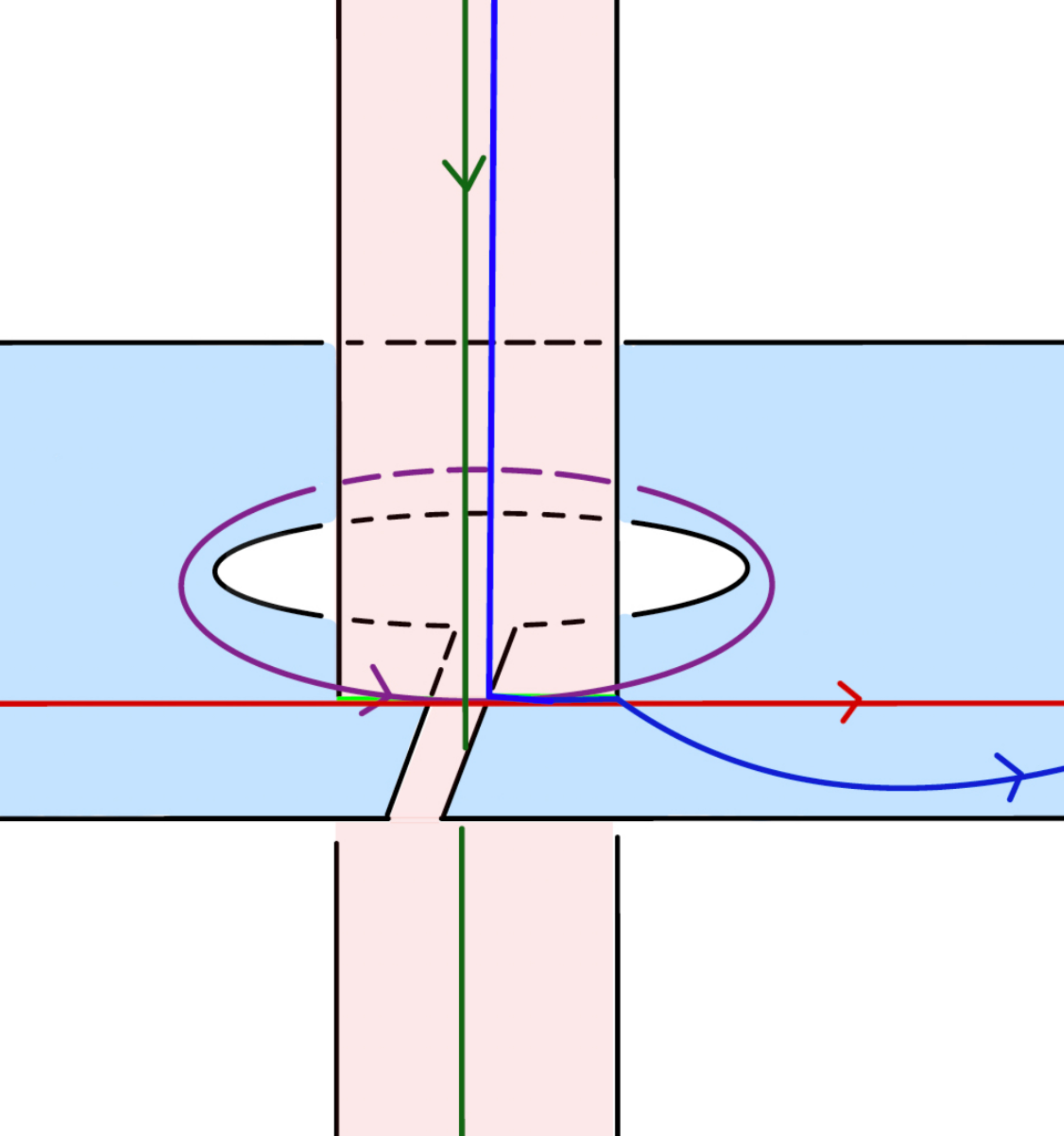}
         \put(-150,95){$\alpha_1$}
         \put(-109,135){$\alpha_2$}
         \put(-40,75){$\alpha_3$}
         \put(-40,40){$\alpha_4$}
         \put(-133, 150){$F_1$}
         \put(-25,133){$F_2$}
         \caption{Basis elements for $H_1(F)$}
         \label{fig:A_f}
     \end{subfigure}
     \begin{subfigure}[b]{0.4\textwidth}
         \centering
         \includegraphics[scale=.23]{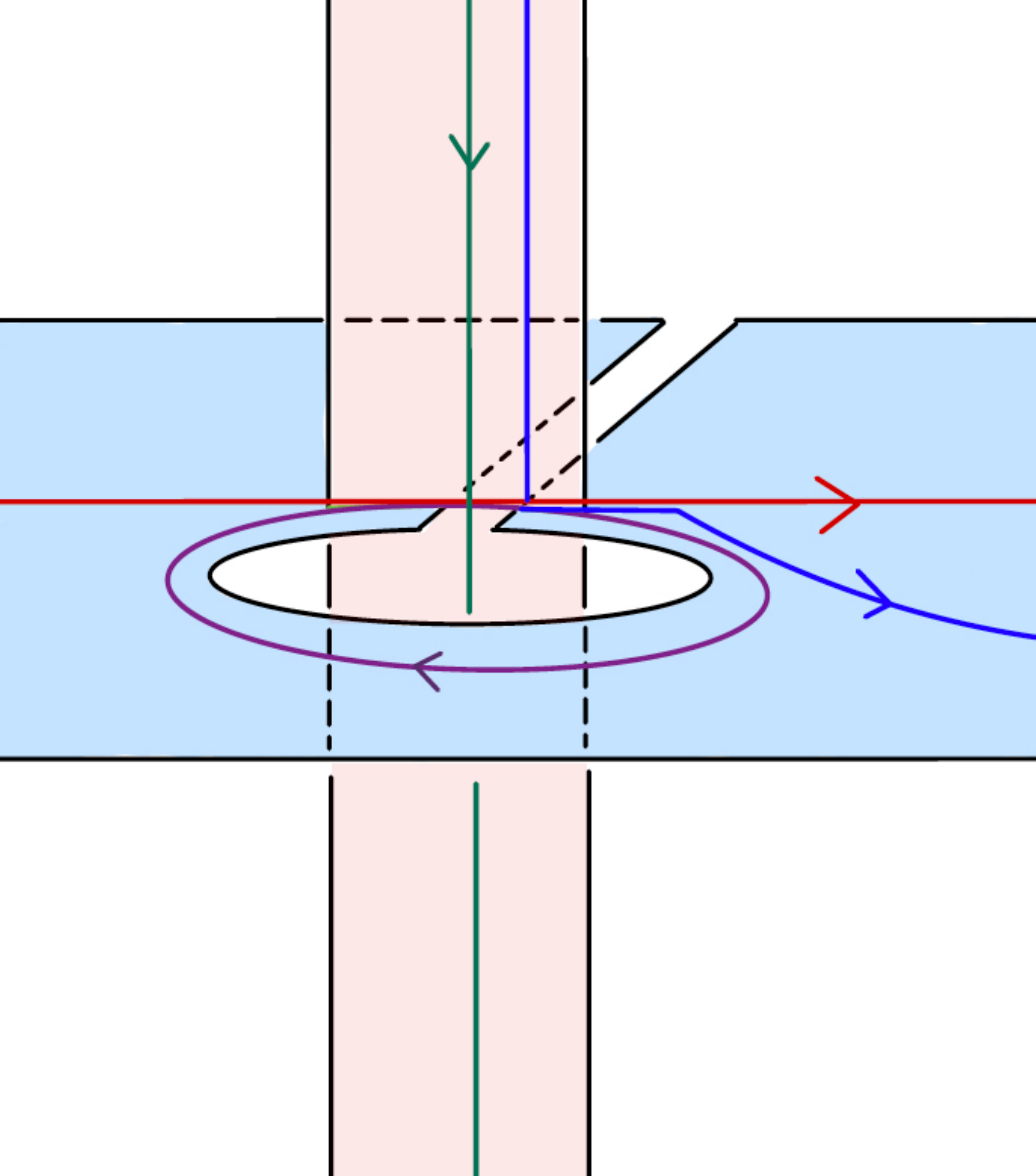}
         \put(-145,72){$\alpha_1'$}
         \put(-105,140){$\alpha_2'$}
         \put(-40,110){$\alpha_3'$}
         \put(-40,75){$\alpha_4'$}
         \put(-138, 150){$F_1'$}
          \put(-25,136){$F_2'$}
         \caption{Basis elements for $H_1(F')$}
         \label{fig:A_F'}
     \end{subfigure}

        \caption{}
        \label{fig: A_F matrices}
\end{figure}

\[A_F = \left(
\begin{blockarray}{cccc|cccc}
\begin{block}{cccc|cccc@{\hspace*{5pt}}}
0 & (-t_1t_2+t_1t_2\inv)z & 0 & (-t_1t_2)z & 0 & 0 & \dots & 0 \\
(-t_1\inv t_2\inv + t_1\inv t_2)z & a_{22} & a_{23} & a_{24} & \BAmulticolumn{4}{c}{\multirow{3}{*}{$B$}} \\
0 & a_{32} & a_{33} & a_{34} & &&& \\
(-t_1\inv t_2\inv)z & a_{42} & a_{43} & a_{44} & &&&\\
 \hline
0 & \BAmulticolumn{3}{c|}{\multirow{4}{*}{$C$}} & \BAmulticolumn{4}{c}{\multirow{4}{*}{$A$}} \\
0 & &&&&&&& \\
\vdots & &&&&&&&&\\
0 &&&&&&&&\\
\end{block}
\end{blockarray}
 \right)
\]
Here, $z=(t_3 - t_3\inv)(t_4 - t_4\inv) \cdots(t_n - t_n\inv)$. The relationship between first row and column of $A_F$ reflects the fact that $(A^{\epsilon})^T = A^{-\epsilon}$, so that $A_F(t_1,\dots, t_n)^T = A_F(-t_1^{-1},\dots, -t_n^{-1})$.
To compute $A_{F'}$, we use the basis elements for $H_1(F')$  pictured in Figure \ref{fig:A_F'}. 

\[A_{F'} = \left(
\begin{blockarray}{cccc|cccc}
\begin{block}{cccc|cccc@{\hspace*{5pt}}}
0 & (-t_1\inv t_2 + t_1\inv t_2\inv)z & 0 & (-t_1\inv t_2)z & 0 & 0 & \dots & 0 \\
(-t_1t_2\inv+ t_1t_2)z & a_{22} & a_{23} & a_{24} & \BAmulticolumn{4}{c}{\multirow{3}{*}{$B$}} \\
0 & a_{32} & a_{33} & a_{34} & &&& \\
(-t_1t_2\inv)z & a_{42} & a_{43} & a_{44} & &&& \\ \hline
0 & \BAmulticolumn{3}{c|}{\multirow{4}{*}{$C$}} & \BAmulticolumn{4}{c}{\multirow{4}{*}{$A$}} \\
0 & &&&&&&& \\
\vdots & &&&&&&&&\\
0 & &&&&&&&\\
\end{block}
\end{blockarray}
\right) \]

Basic row and column operations show that $\det(-A_F) = \det(-A_{F'})$; multiplying the first row of $A_F$ by $t_1^{-2}$ and the first column of $A_F$ by $t_1^2$ results in $A_{F'}$. Thus, $\Omega_F = \Omega_{F'}$.
\end{proof}

\subsection{The signature and nullity of a colored link}\label{sect:signature}

Given an $n$-colored link $(L, \sigma)$, Cimasoni and Florens define the \emph{signature} and \emph{nullity} functions of $L$ \cite{CF}.  Let $F$ be a C-complex for $L$. For each choice of $\epsilon: \{1, \dots, n\} \rightarrow\{\pm1 \}$ and a fixed basis for $H_1(F)$, compute the linking matrix $A^{\epsilon}_F$ as in subsection~\ref{sect:Conway}. Given a choice of $\vec{\omega} = (\omega_1, \dots, \omega_n)\in \mathbb{C}^n$ such that $|\omega_i|=1$ and $\omega_i \ne 1$, for all $i$, define
$$H_F(\vec{\omega}) = \Sum_{\epsilon} \left( \prod_{i=1}^n \left(1-\bar{\omega}_i^{\epsilon(i)}\right)\right)A^{\epsilon}_F.$$

The fact that $H_F(\vec{\omega})$ is Hermitian follows from the observation that  $A^{-\epsilon}_F = (A^{\epsilon}_F)^T$. Cimasoni and Florens define the \emph{signature} of $L$, $\sigma_L(\vec{\omega})$, to be the signature of $H_F(\vec{\omega})$ and the \emph{nullity} of $L$ to be $\eta_L(\vec{\omega})= \text{null}(H_F(\vec{\omega}))+\beta_0(F)-1$. Here, $\beta_0$ denotes the zero'th Betti number. They then give the following theorem, which is proved by demonstrating that signature and nullity are preserved by the C-complex moves \ref{move: isotopy}, \ref{move: handle}, \ref{move: ribbon+push}, and \ref{move: pass through clasp} and appealing to Lemma~\ref{false lemma}, which is false. We complete their proof by showing that the move \ref{move: push along different arc} also preserves signature and nullity.

\begin{theorem}[Theorem 2.1 of \cite{CF}]
The signature $\sigma_L$ and nullity $\eta_L$ do not depend on the choice of C-complex for a colored link $L$. They are well-defined isotopy invariants of the colored link. 
\end{theorem}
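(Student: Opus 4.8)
The plan is to mirror the structure of the proof just given for $\Omega_F$, but now at the level of the Hermitian matrices $H_F(\vec\omega)$ rather than the single-variable-combined matrix $A_F$. Since Cimasoni and Florens already establish invariance of $\sigma_L$ and $\eta_L$ under the moves \ref{move: isotopy}, \ref{move: handle}, \ref{move: ribbon+push}, and \ref{move: pass through clasp}, it suffices to verify that a single \ref{move: push along different arc} move, which we may assume takes place between $F_1$ and $F_2$ exactly as in Figure~\ref{fig: A_F matrices}, changes $H_F(\vec\omega)$ only by a congruence $P^* H_F(\vec\omega) P$ for some invertible $P$ (independent of $\vec\omega$, or at worst depending on $\vec\omega$ through a nonzero scalar on one coordinate). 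A congruence preserves both signature and nullity, and since a \ref{move: push along different arc} move also preserves $\beta_0(F)$, this gives $\sigma_L$ and $\eta_L$ unchanged.

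First I would record the two linking matrices. Using the same bases $\alpha_1,\alpha_2,\alpha_3,\alpha_4$ (resp.\ $\alpha_1',\dots,\alpha_4'$) completed to bases of $H_1(F)$ (resp.\ $H_1(F')$) as in Figures~\ref{fig:A_f} and~\ref{fig:A_F'}, the only entries of the $A_F^\epsilon$ that change are those in the first row and first column, and only through their dependence on $\epsilon(1),\epsilon(2)$ — precisely the entries computed from the clasp geometry of the picture. Summing against the weights $\prod_i(1-\bar\omega_i^{\epsilon(i)})$, I would write out the $(1,j)$ and $(i,1)$ entries of $H_F(\vec\omega)$ and of $H_{F'}(\vec\omega)$ explicitly; by the computation already carried out for $A_F$ versus $A_{F'}$, passing from the $F$-picture to the $F'$-picture amounts to the substitution that, in the $\Omega$ proof, was realized by multiplying row $1$ by $t_1^{-2}$ and column $1$ by $t_1^{2}$. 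At $t_i=\bar\omega_i$ (or the appropriate evaluation) this is a congruence: multiply the first row of $H_F(\vec\omega)$ by a unit scalar $\mu=\bar\omega_1^{-2}$ and the first column by $\bar\mu$. Concretely, $P=\operatorname{diag}(\bar\omega_1^{-1},1,\dots,1)$ (or its inverse) should satisfy $P^*H_F(\vec\omega)P=H_{F'}(\vec\omega)$, possibly after also accounting for the fact that the completion of the basis outside the local picture is literally the same curves, so the blocks $A$, $B$, $C$ are untouched.

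The step I expect to be the main obstacle is getting the local linking-number bookkeeping exactly right: verifying that the curves $\alpha_3,\alpha_4$ (which run near the clasps and whose pushoffs are pinned down by the ``loop'' condition) contribute the claimed entries $(-t_1t_2)z$, $(-t_1^{-1}t_2^{-1})z$, etc., in the $A_F$ matrix, and that after the \ref{move: push along different arc} move these become $(-t_1^{-1}t_2)z$, $(-t_1t_2^{-1})z$, with all the other local entries $a_{22},\dots,a_{44}$ genuinely unchanged. Once that combinatorial input is in hand — and it is the same input needed for the $\Omega_F$ lemma above — the congruence is immediate and the conclusion for $\sigma_L$ and $\eta_L$ follows formally. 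I would therefore present the $H_F$ and $H_{F'}$ matrices in block form paralleling the display for $A_F$, exhibit the diagonal congruence matrix $P$, check $P^*H_FP=H_{F'}$ by inspection of the first row and column, and conclude invariance of signature and nullity, hence that they are well-defined isotopy invariants of the colored link.
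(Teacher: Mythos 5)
Your plan is essentially the paper's argument: write out $H_F(\vec\omega)$ and $H_{F'}(\vec\omega)$ in the same block form as $A_F$ and $A_{F'}$, exhibit a diagonal congruence $P$ carrying one to the other, note that congruence preserves signature and nullity, and observe $\beta_0(F)=\beta_0(F')$. The paper carries this out with $P=\operatorname{diag}(\bar\omega_1,1,\dots,1)$, i.e.\ multiply the first row by $\omega_1$ and the first column by $\bar\omega_1$, which is exactly your ``$P=\operatorname{diag}(\bar\omega_1^{-1},1,\dots,1)$ or its inverse.''

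One small arithmetic caution: your proposed scalar $\mu=\bar\omega_1^{-2}$ on the first row is not what a naive substitution $t_1\mapsto\bar\omega_1$ into the $\Omega$-operation predicts should work here. In $A_F$ the affected first-row entries involve both $t_1$ and $t_1^{-1}$, so a factor of $t_1^{-2}$ is needed to swap them; in $H_F(\vec\omega)$, once the overall prefactor $(1-\bar\omega_1)\cdots(1-\bar\omega_n)$ is pulled out, the first-row entries contain only a single power of $\omega_1$ (either $\omega_1$ or $1$), so the correct factor is a single power $\omega_1=\bar\omega_1^{-1}$, not a square. Since you explicitly say you would ``check $P^*H_FP=H_{F'}$ by inspection of the first row and column,'' you would catch and fix this, and your stated $P$ (with the ``or its inverse'' hedge) is already the right matrix; just make sure the scalar you report is consistent with that $P$.
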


\begin{proof}
Let $F$ be a C-complex for an $n$-colored link $L$, and let $F'$ be a C-complex related to $F$ by a \ref{move: push along different arc} move.

Using the basis elements from Figure \ref{fig: A_F matrices} for $H_1(F)$ and $H_1(F')$, and setting $v=(1-\omega_3)\cdots(1-\omega_n)$, we compute the associated Hermitian matrices $H_F(\vect\omega)$ and $H_{F'}(\vect\omega)$.

\[H_F(\vec{\omega}) = (1-\bar{\omega}_1)\cdots(1-\bar{\omega}_n)\left(
\begin{blockarray}{cccc|cccc}
\begin{block}{cccc|cccc@{\hspace*{5pt}}}
0 & -(1-\omega_2)v & 0 & -v & 0 & 0 & \dots & 0 \\
\omega_1(1-\omega_2)v & b_{22} & b_{23} & b_{24} & \BAmulticolumn{4}{c}{\multirow{3}{*}{$D$}} \\
0 &b_{32} & b_{33} & b_{34} & &&& \\
-(\omega_1\omega_2)v &b_{42} & b_{43} & b_{44} & &&&\\ \hline
0 & \BAmulticolumn{3}{c|}{\multirow{4}{*}{$E$}} & \BAmulticolumn{4}{c}{\multirow{4}{*}{$F$}} \\
0 & &&&&&&& \\
\vdots & &&&&&&&&\\
0 & &&&&&&&\\
\end{block}
\end{blockarray} \right),
\]
\[H_{F'}(\vec{\omega}) = (1-\bar{\omega}_1)\cdots(1-\bar{\omega}_n)\left(
\begin{blockarray}{cccc|cccc}
\begin{block}{cccc|cccc@{\hspace*{5pt}}}
0 & -\omega_1(1-\omega_2)v & 0 & -\omega_1v& 0 & 0 & \dots & 0 \\
(1-\omega_2)v& b_{22} & b_{23} & b_{24} & \BAmulticolumn{4}{c}{\multirow{3}{*}{$D$}} \\
0 &b_{32} & b_{33} & b_{34} & &&& \\
-\omega_2v& b_{42} & b_{43} & b_{44} & &&&\\ \hline
0 & \BAmulticolumn{3}{c|}{\multirow{4}{*}{$E$}} & \BAmulticolumn{4}{c}{\multirow{4}{*}{$F$}} \\
0 & &&&&&&& \\
\vdots & &&&&&&&&\\
0 & &&&&&&&\\
\end{block}
\end{blockarray} \right).
\]

By performing the elementary row and column operations of multiplying the first row of $H_F(\vec{\omega})$ by $\omega_1$ and the first column of $H_F(\vec{\omega})$ by $\bar{\omega}_1$, we transform $H_F(\vec{\omega})$ into $H_{F'}(\vec{\omega})$. This matrix operation preserves the signature and dimension of the nullspace. Further, the \ref{move: push along different arc} move does not change the number of connected components of the C-complex, so $\beta_0(F) = \beta_0(F')$. Thus, $\sigma_L(\vec{\omega})$ and $\eta_L(\vec{\omega})$ are invariants of the colored link $L$. 

\end{proof}

\section{Correcting Lemma~\ref{false lemma}: the proof of Theorem~\ref{thm: corrected lemma}}\label{sect: corrected lemma}

 In this section we prove Theorem \ref{thm: corrected lemma}, thus providing a complete set of moves relating any pair of C-complexes for a fixed link.  We start with a general summary of the proof and then spend the bulk of the section on the technical details.  We have made an effort to include every detail, even at the expense of brevity; we feel that this choice is appropriate when correcting an error that has gone nearly two decades without detection.  Our arguments will pass from C-complexes into the setting of unions of surfaces with more flexibility in their intersections. 

\begin{definition}\label{defn: surface system}
A union of embedded compact oriented surfaces $F=F_1\cup\dots\cup F_n$ with no closed components bounded by the colored link $L=L_1\cup\dots\cup L_n$ is called a \emph{surface system} for $L$ if for all $i, j, k, \ell\in \{1,\dots, n\}$ 
\begin{enumerate}[label=\Roman*] 
\item\label{CP:int} : $F_i$ is transverse to $F_j$,
\item\label{CP:bdry} : $F_i$ is transverse to $L_j=\bdry F_j$,
\item\label{CP:triple}  : $F_i$ is transverse to $F_j\cap F_k$,
\item\label{CP:bdryTriple}  : $F_i\cap \bdry (F_j\cap F_k) = \emptyset$,
\item\label{CP:quadruple}  : $F_i\cap F_j\cap F_k\cap F_\ell = \emptyset$.
\end{enumerate}
\end{definition}

For any surface system $F=F_1\cup\dots\cup F_n$, the stated transversality implies that for any $i\neq j$, $F_i\cap F_j$ is a 1-manifold.  Therefore any component of $F_i\cap F_j$ has one of three types of intersection: clasps, ribbons, and circles.
\begin{itemize}
\item A component $\alpha$ of $F_i\cap F_j$ is a \emph{clasp intersection} if $\alpha$ is an arc with one endpoint in $\bdry F_i$ and the other in $\bdry F_j$; see Figure~\ref{fig: clasps}.
\item A component $\alpha$ of $F_i\cap F_j$ is a \emph{ribbon intersection} if $\alpha$ is an arc with both endpoints in $\bdry F_i$ or both endpoints in $\bdry F_j$;  a ribbon intersection appears in Figure~\ref{fig: (T4)B}.
\item Otherwise a component $\alpha$ of $F_i\cap F_j$ is a \emph{circle} interior to each of $F_i$ and $F_j$.
\end{itemize}    
For any distinct $i,j, \text{ and } k$, $F_i\cap F_j\cap F_k$  is a finite set of points.  Such a point is called a \emph{triple point} of $F$.  

Let $F$ be a smooth compact oriented surface.  By an isotopy $\Phi^t:F\to S^3$ we mean a smooth map $\Phi:F\times[0,1]\to S^3$ so that $\Phi|_{F\times\{t\}}$ is a smooth embedding for all $t$.  We will use $\Phi^t(x)$ to denote $\Phi(x,t)$ and $F^t$ to denote $\Phi^t(F)$.  An isotopy from a (compact, oriented) surface $F$ to $F'$ in $S^3$ is an isotopy $\Phi^t:F\to S^3$ where $\Phi^0:F\to S^3$ is the inclusion map and $\Phi^1(F)=F'$.  

Let $F=F_1\cup\dots\cup F_n$ and $F'=F'_1\cup\dots\cup F'_n$ be C-complexes, or more generally surface systems.  For $i=1,\dots,n$ let $\Phi_i:F_i\times[0,1] \to S^3$ be an isotopy from $F_i$ to $F'_i$.  If the union of restrictions of the various $\Phi_i$ to $\bdry F_i$ gives an isotopy of colored links, then we call $\Phi =\{\Phi_i\}= \{\Phi_1, \dots, \Phi_n\}$ a \emph{collection  of isotopies} from $F$ to $F'$ and we will denote by $F^t$ the union of surfaces $F^t_1\cup\dots\cup F^t_n$ where $F_i^t = \Phi^t_i(F_i)$.  

Given such a collection of isotopies, any $t\in [0,1]$ for which $F^t$ fails to be a surface system is called a \emph{critical time}.  A perturbation of $\Phi$ allows us to arrange that there are only finitely many critical times and that at any critical time there is a unique \emph{critical point} at which exactly one of the  conditions of Definition~\ref{defn: surface system} fails.  For $A \in\{\text{\ref{CP:int},~\ref{CP:bdry},~\ref{CP:triple},~\ref{CP:bdryTriple},~\ref{CP:quadruple}}\}$, we say a critical point is of \emph{type $A$} if the corresponding condition of a surface system fails.

We are now ready to provide an outline of the proof of Theorem~\ref{thm: corrected lemma}.  During this outline we will make reference to several facts.  Afterwards, we  provide precise statements and proofs.  

\begin{proof}[Proof of Theorem~\ref{thm: corrected lemma}]  
Suppose $F=F_1 \cup F_2 \cup \cdots \cup F_n$ and $F'=F_1' \cup F_2' \cup \cdots \cup F_n'$ are C-complexes for two isotopic colored links, $L$ and $L'$.  We need to exhibit a sequence of moves \ref{move: isotopy} through \ref{move: push along different arc} starting at $F$ and ending at $F'$.  After some handle additions staying in the category of C-complexes, we  arrange that $F_i$ is isotopic to $F_i'$ for all $i=1,\dots, n$ (Lemma~\ref{lem:handle}). In other words, after changing $F$ and $F'$ by a collection of \ref{move: isotopy} and \ref{move: handle} moves, there exists a collection of isotopies, $\Phi$ from $F$ to $F'$. By perturbing $\Phi$, we arrange that $\Phi$ has only finitely many critical times and that at each critical time there is a single critical point.   As we shall see in Proposition~\ref{prop: get ambient isotopy}, if $[a,b] \subset [0,1]$ contains no critical times, then there is an ambient isotopy $\Psi: S^3\times[a,b] \rightarrow S^3$ such that $F^t=\Psi(F^a)$ for all $t \in [a,b]$.

In Figure~\ref{fig: push along an arc} we see a move which replaces $\Phi$ by another sequence of isotopies with one fewer type~\ref{CP:int} critical point.  This move is called a push along an arc and is described in Definition~\ref{def:pushmove}.  Using this move we remove all critical points of type~\ref{CP:int}, \ref{CP:triple}, and \ref{CP:quadruple} in Proposition~\ref{lem: no interior CP's}.

In Proposition~\ref{pairingcp} further modify $\Phi$ to eliminate all circle intersections and organize the remaining type~\ref{CP:bdry} and \ref{CP:bdryTriple} critical times into pairs $\{(a_i, b_i)\}$ so that for all $t\in (a_i,b_i)$, $F^t$ contains exactly one ribbon intersection or exactly one triple point but not both.  Away from these subintervals, $F^t$ is a C-complex.  Thus, we need only analyze how such a pair of critical times changes the C-complex.  The Type~\ref{CP:bdry} critical times which increase the number of ribbon intersections appear in Figure~\ref{fig: ribbon appears}.  Following any one of these with the reverse of another results in a \ref{move:T2} or \ref{move:T4} move.   The paired type \ref{CP:bdryTriple} critical times result in a sequence of the moves \ref{move: isotopy} though \ref{move: push along different arc}; See Figures~\ref{fig: type IV paired} and \ref{fig: Find T3 move}.  This completes the proof.
\end{proof}

The remainder of this section is devoted to filling in the details above.

\begin{lemma}\label{lem:handle}
Let $F=F_1\cup\dots\cup F_n$ and $F'=F'_1\cup\dots\cup F'_n$ be C-complexes for a link $L$.  There exist C-complexes $G=G_1\cup\dots\cup G_n$ and $G'=G'_1\cup\dots\cup G'_n$ for $L$ so that $F$ and $F'$ are transformed to $G$ and $G'$, respectively, by a sequence of \ref{move: isotopy} and \ref{move: handle} moves such that $G_i$ is isotopic to $G_i'$ for each $i = 1,\dots, n$.
\end{lemma}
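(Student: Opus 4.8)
The plan is to treat the $n$ colors one at a time and reduce to a classical fact about Seifert surfaces, taking care that every modification is a genuine \ref{move: isotopy} or \ref{move: handle} move on the whole C-complex. As a preliminary normalization, since $L$ and $L'$ are isotopic colored links there is an ambient isotopy of $S^3$ carrying $L$ onto $L'$; applying it to $F$ is a \ref{move: isotopy} move, so we may assume $L=L'$, and hence that $F$ and $F'$ are C-complexes for a single colored link with $\bdry F_i=\bdry F'_i=L_i$ for every $i$.

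The essential input is the classical fact that any two Seifert surfaces with no closed components for a link $J\subseteq S^3$, sharing the boundary $J$, admit a \emph{common stabilization}: there is a Seifert surface $\widetilde S$ for $J$ obtained from each of the two by a finite sequence of handle attachments, up to an isotopy rel $J$. I would prove this by the usual cobordism argument. After an isotopy rel $J$ the two surfaces meet in $J$ together with finitely many circles in their interiors, and these circles can be removed using isotopies and handle attachments, leaving two Seifert surfaces for $J$ that are disjoint away from $J$; their union along $J$ is a closed surface in $S^3$ (after smoothing) and hence separates $S^3$, so it cobounds a compact region $W\subseteq S^3$ with no closed components. A Morse function on $W$ with the two surfaces as its ends can be taken, after cancellation, with no critical points of index $0$ or $3$, and its critical points can be reordered so that all index-$1$ critical points precede all index-$2$ critical points; a regular level $\widetilde S$ between the two groups is then obtained from the lower surface by the index-$1$ handle attachments, and from the upper surface by handle attachments as well, since read from above the index-$2$ critical points appear as index-$1$ critical points.

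Applying this color by color to each pair $(F_i,F'_i)$ yields, for every $i$, a surface $G_i$ obtained from $F_i$ by handle attachments and a surface $G'_i$ obtained from $F'_i$ by handle attachments, with $G_i$ isotopic to $G'_i$; what remains is to realize these handle attachments, and the ambient isotopies accompanying them, as legitimate \ref{move: handle} and \ref{move: isotopy} moves on the C-complexes. A handle attachment is a \ref{move: handle} move only if its attaching tube is disjoint from the rest of the C-complex, so the cobordism argument above must be carried out \emph{away from} $\bigcup_{j\neq i}F_j$. I would arrange this by running that argument inside the complement of a regular neighborhood of the other surfaces rather than in all of $S^3$ --- using that all clasps meeting $F_i$ lie in a collar $A_i$ of $\bdry F_i$, so that outside $A_i$ the surface $F_i$ is disjoint from the other surfaces --- and by placing every stabilizing tube and every supporting isotopy in that complement; extended by the identity, these become moves on $F$, and symmetrically on $F'$. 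This compatibility with the C-complex structure is the main obstacle: the common-stabilization construction must be made to respect the other surfaces and the clasps throughout, rather than being applied naively in $S^3$. (That handle attachments can be placed at all is reflected in the fact that the complement of a C-complex in $S^3$ is connected: $F$ is homotopy equivalent to a graph, so $\widetilde H_0(S^3\setminus F)\cong\widetilde H^2(F)=0$ by Alexander duality.)
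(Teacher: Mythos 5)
Your proof follows the same two-step plan as the paper's: (1) invoke the classical common-stabilization theorem for Seifert surfaces to produce, for each color $i$, a surface $V_i$ obtained from both $F_i$ and $F_i'$ by handle additions and isotopies, and (2) arrange for the attaching data to be disjoint from the remaining surfaces so that the stabilizations are honest \ref{move: handle} moves. The difference is in the sourcing of step (1): the paper simply cites \cite[Chapter 1, Proposition 2.7]{CoopThesis}, then reorders each sequence so that all handle additions precede a single terminal isotopy, and finally perturbs the attaching arcs off $\bigcup_{j\neq i}F_j$; you instead re-derive the stabilization result via a Morse/cobordism argument, which is a legitimate alternative, and you add the helpful Alexander-duality observation that the complement of a C-complex is connected. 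One caveat on step (2): your suggestion to ``run the cobordism argument inside the complement of a regular neighborhood of the other surfaces'' cannot be taken literally, since the region $W$ cobounded by $F_i$ and $F_i'$ necessarily meets the other surfaces near the clasps and cannot be displaced into that complement. What can be arranged --- and this is what your later phrase ``placing every stabilizing tube \dots in that complement'' gestures at, and what the paper's ``further isotopy'' sentence does --- is to carry out the stabilization naively in $S^3$, push all handle attachments ahead of the concluding isotopy, and only then perturb the $1$-dimensional attaching arcs off the $2$-dimensional surfaces $F_j$; general position together with the connectivity you invoke makes this possible while changing the stabilized surface only by an ambient isotopy. So the core of your argument matches the paper's, but the ``run the argument in a subregion'' framing should be replaced by the ``perturb the arcs afterward'' framing to be strictly correct.
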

\begin{proof}

By \cite[Chapter 1 Proposition 2.7]{CoopThesis}, for any $i$, there is a third Seifert surface $V_i$ obtained from each of $F_i$ and $F'_i$ by a sequence of ambient isotopies and addition of hollow handles.  By conjugating by an ambient isotopy the arcs along which the hollow handles are added, we  arrange that each sequence consists of all of the handle additions followed by an ambient isotopy.  A further isotopy allows us to arrange that these arcs, and so the resulting hollow handles are disjoint from $F_j$ and $F'_j$ for all $j$.  

Since the addition of a hollow handle along an arc disjoint from each other component of a C-complex is precisely the \ref{move: handle} move, this proves the result.   

\end{proof}

Thus, it suffices to consider C-complexes $F=F_1\cup\dots \cup F_n$ and $F'=F'_1\cup\dots F'_n$ which are related by a collection of isotopies.  This collection of isotopies need not extend to an isotopy of the whole C-complex, and indeed $F^t=\Phi^t(F)$ may leave the category of C-complexes.  The idea of the proof from here on is to restrict how these isotopies change a C-complex, and more generally any surface system.  Proposition~\ref{prop: get ambient isotopy} reveals that away from  critical times, a collection of isotopies extends to an ambient isotopy.  This result is used implicitly in \cite[Lemma 5.1]{Cooper} and \cite[Lemma 3]{CC}.   We state it explicitly here and prove it in subsection~\ref{subsect: technical iso extn}.  

\begin{proposition}\label{prop: get ambient isotopy}
Let $F=F_1\cup\dots \cup F_{k}$ be a  be a surface system.  Let $\Phi$ be a collection of isotopies from $F$ to another surface system.  Suppose also that for all $t\in [0,1]$ we have that 
  $F^t$  is a surface system.  Then there is an ambient isotopy $\Psi:S^3\times[0,1] \to S^3$ with $\Psi^t(F) = F^t$.  
\end{proposition}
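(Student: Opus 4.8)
The plan is to prove a stratified version of the isotopy extension theorem, producing $\Psi$ as the flow of a time-dependent vector field on $S^3$ that drags each $F_i$ along with $\Phi_i$. For each $i$ the isotopy $\Phi_i$ traces out an embedded copy $\widehat F_i=\{(\Phi_i^t(x),t):x\in F_i,\ t\in[0,1]\}$ of $F_i\times[0,1]$ inside $S^3\times[0,1]$, with boundary face $\widehat L_i$; write $\xi_i^t(p)=\tfrac{\partial}{\partial t}\Phi_i^t(x)$ for $p=\Phi_i^t(x)$. Given a smooth time-dependent vector field $V=(V_t)$ on $S^3$, the field $X:=\partial_t+V_t$ on $S^3\times[0,1]$ has a flow defined for all $t\in[0,1]$ (by compactness, since the $[0,1]$-component of $X$ is $\equiv 1$), and its time-$t$ flow $\Psi^t$ from $S^3\times\{0\}$ carries each $F_i$ onto $F_i^t$ and each $L_i$ onto $L_i^t$ precisely when $X$ is tangent to $\widehat F_i$ along $\widehat F_i$ and to $\widehat L_i$ along $\widehat L_i$, that is, when $V_t(p)\in\xi_i^t(p)+T_pF_i^t$ at each $(p,t)\in\widehat F_i$ and $V_t(p)\in\xi_i^t(p)+T_pL_i^t$ at each $(p,t)\in\widehat L_i$. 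So it is enough to find a smooth $V$ whose value at every $(p,t)$ lies in the affine subspace $\mathcal A(p,t)\subseteq T_pS^3$ cut out by all these conditions (and $V\equiv 0$ off a neighborhood of $\widehat F:=\bigcup_i\widehat F_i$); then $\Psi^0=\mathrm{id}$ automatically and $\Psi^t(F)=F^t$.

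The heart of the matter is that $\mathcal A(p,t)\ne\emptyset$, and this is where all five conditions of Definition~\ref{defn: surface system} enter. Fix $(p,t)$ and consider the local picture of $p$ in the surface system $F^t$. If $p$ is interior to a single sheet $F_i^t$, then $\mathcal A(p,t)$ is an affine $2$-plane. If $p$ lies on a double curve $F_i^t\cap F_j^t$ away from $L$ and from triple points, then \ref{CP:int} gives $T_pF_i^t\ne T_pF_j^t$, so the two affine $2$-planes are non-parallel and meet in an affine line. If $p$ is a triple point on $F_i^t\cap F_j^t\cap F_k^t$, then \ref{CP:int} and \ref{CP:triple} together make the planes $T_pF_i^t,T_pF_j^t,T_pF_k^t$ pairwise distinct with trivial common intersection in $T_pS^3\cong\R^3$, whence the three affine $2$-planes meet in a single point; condition \ref{CP:quadruple} guarantees that this is the deepest configuration of sheets, so no unsolvable system of four or more affine planes ever arises. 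If $p\in L_i^t$ and lies on no other sheet, then $T_pL_i^t\subset T_pF_i^t$ forces $\mathcal A(p,t)=\xi_i^t(p)+T_pL_i^t$, an affine line. Finally, if $p$ is an endpoint of a clasp or ribbon, so $p\in L_i^t\cap F_j^t$, then $\mathcal A(p,t)=(\xi_i^t(p)+T_pL_i^t)\cap(\xi_j^t(p)+T_pF_j^t)$ is a single point, since \ref{CP:bdry} gives $T_pL_i^t\not\subset T_pF_j^t$; condition \ref{CP:bdryTriple} ensures this case never coincides with a triple point (and, applied symmetrically, that triple points never lie on $L$), so the list of cases is exhaustive and disjoint. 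In every case $\mathcal A(p,t)$ is a nonempty affine subspace, it varies smoothly along each stratum of $\widehat F$, and these affine bundles fit together coherently across strata (for instance, $\mathcal A$ along a double curve extends over the triple points in its closure, as an affine line bundle still containing the triple-point value).

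To build a global $V$, I will use that an affine subbundle over a manifold always admits a section, even one prescribed on a submanifold. Around each point of $\widehat F$ one chooses a section of $\mathcal A$ over the deepest nearby stratum, extends it one stratum at a time out to the top, and then extends arbitrarily to a neighborhood in $S^3\times[0,1]$; covering $\widehat F$ by such neighborhoods, adjoining $S^3\times[0,1]\setminus\widehat F$ with the zero field, and forming the convex combination of the local fields against a subordinate partition of unity produces a smooth $V$ with $V(q)\in\mathcal A(q)$ for all $q\in\widehat F$, crucially because each $\mathcal A(q)$, being affine, is closed under convex combinations. Then $X=\partial_t+V_t$ has the required tangencies and its flow yields $\Psi$.

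I expect the main obstacle to be conceptual rather than computational: the tempting definition of $V_t$ by matching the surface velocities $\xi_i^t$ fails, because distinct sheets do not share velocities along their intersection curves, so one must instead only require membership in the affine subspaces $\xi_i^t(p)+T_pF_i^t$; after that the whole proof reduces to the nonemptiness case analysis above, in which each surface-system condition is exactly what keeps the relevant affine system solvable. What remains is routine by the standards of the isotopy extension theorem: spelling out the coherence of $\mathcal A$ across strata carefully enough to run the stratified extension, and the usual mild care near $t\in\{0,1\}$, where $\widehat F_i$ has corners.
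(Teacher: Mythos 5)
Your proof is correct, and it takes a genuinely different route from the paper's. The paper works stratum by stratum from the deepest intersections outward: it uses the regular interval theorem to parametrize the triple-point locus $\bbX_{ijk}$ as an isotopy, extends that to an ambient isotopy and absorbs it into $\Phi$, freezes a ball around each triple point and a collar of $\bdry F$, deletes these regions, and repeats the same argument for the double curves $\bbX_{ij}$ in the complement, finally reducing to the classical isotopy extension theorem for disjoint embedded surfaces. You instead carry out a Thom-style stratified isotopy extension in one pass: you build a single time-dependent vector field $V_t$ whose value at each $(p,t)$ lies in the intersection of the affine translates $\xi_i^t(p)+T_pF_i^t$ (and $\xi_i^t(p)+T_pL_i^t$ along the boundary strata), verify via a case analysis that each surface-system condition \ref{CP:int}--\ref{CP:quadruple} makes the relevant affine system solvable, and then glue local sections with a partition of unity, exploiting that affine subspaces are closed under convex combination. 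Both approaches spend the same transversality hypotheses in the same places; your version avoids the repeated compositions and the appeal to the regular interval theorem, at the cost of having to set up the stratum-by-stratum coherence of the affine bundle $\mathcal{A}$ carefully, whereas the paper's version is more elementary and pedestrian but requires bookkeeping the nested deletions $U$ and $W$. One small point worth making explicit in your case analysis: at a triple point, nonemptiness of $\mathcal{A}(p,t)$ follows because the three tangent planes $T_pF_i^t,T_pF_j^t,T_pF_k^t$ have pairwise-transverse intersection with trivial common intersection in $\R^3$, so the corresponding three linear functionals are independent and the $3\times 3$ affine system is uniquely solvable; alternatively one can observe that the path of triple points $p(t)$, which exists by hypothesis, has $\dot p(t)\in\mathcal{A}(p(t),t)$. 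Either remark closes the small logical gap at ``whence the three affine $2$-planes meet in a single point.''
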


It remains to control what happens at critical times. We first eliminate critical points of types \ref{CP:int}, \ref{CP:triple}, and \ref{CP:quadruple}.  A key  is the {push along an arc} move, which we now describe.

\begin{figure}
     \centering
     \begin{subfigure}[b]{0.8\textwidth}
         \centering
         \begin{tikzpicture}
         \node at (0,0){\includegraphics[width=.3\textwidth]{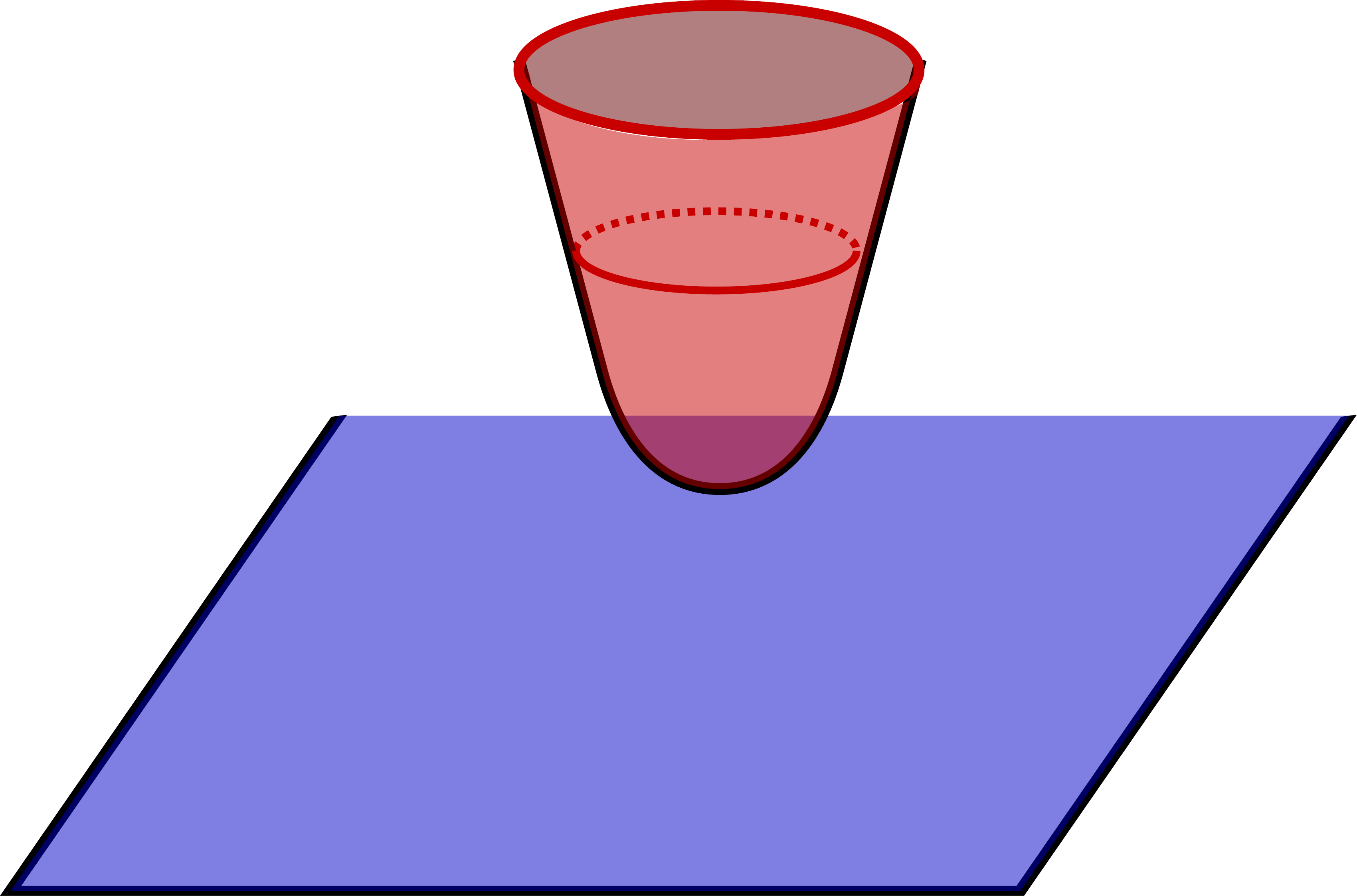}};]
         \node at (4,-.2){\includegraphics[width=.3\textwidth]{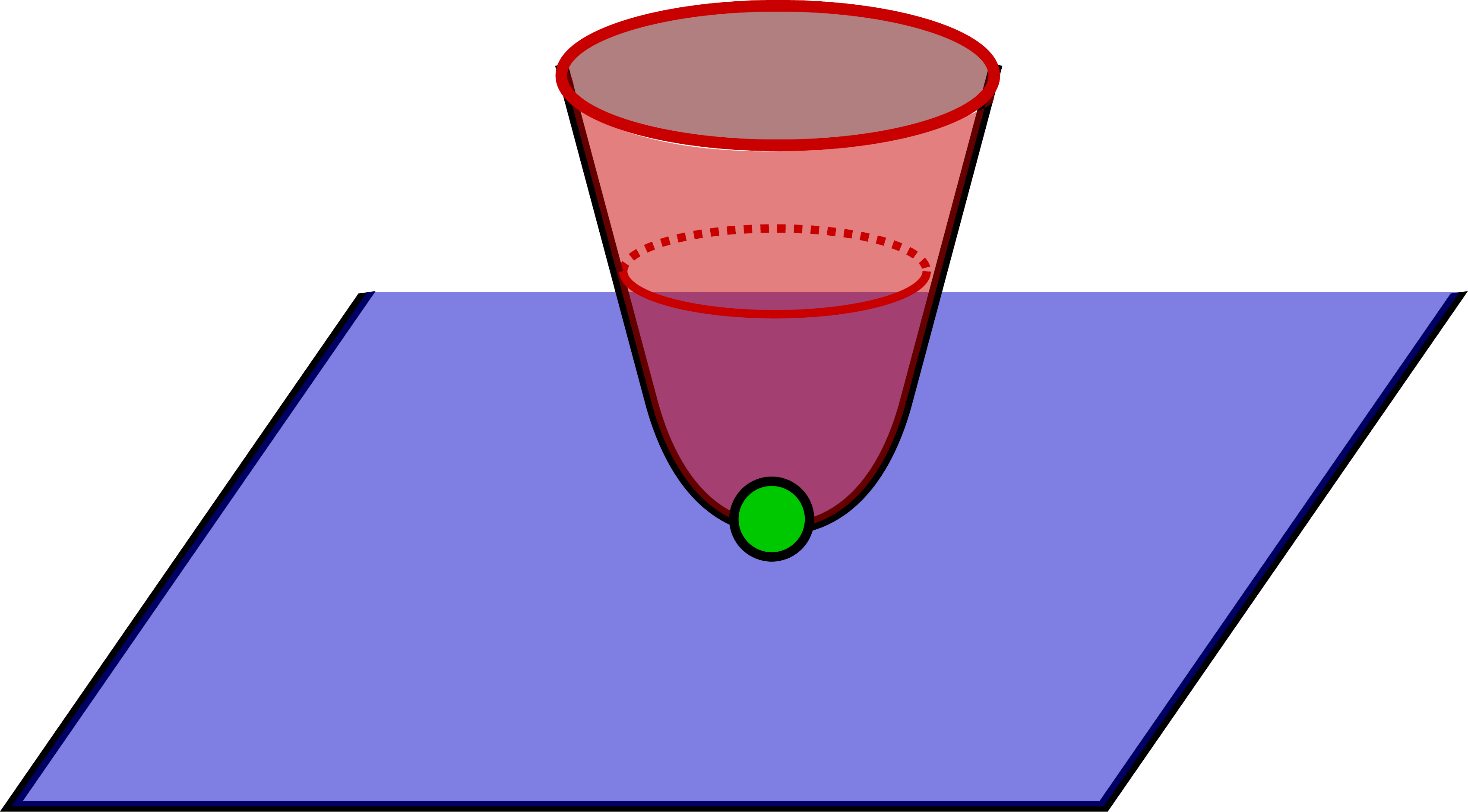}};]
         \node at (8,-.5){\includegraphics[width=.3\textwidth]{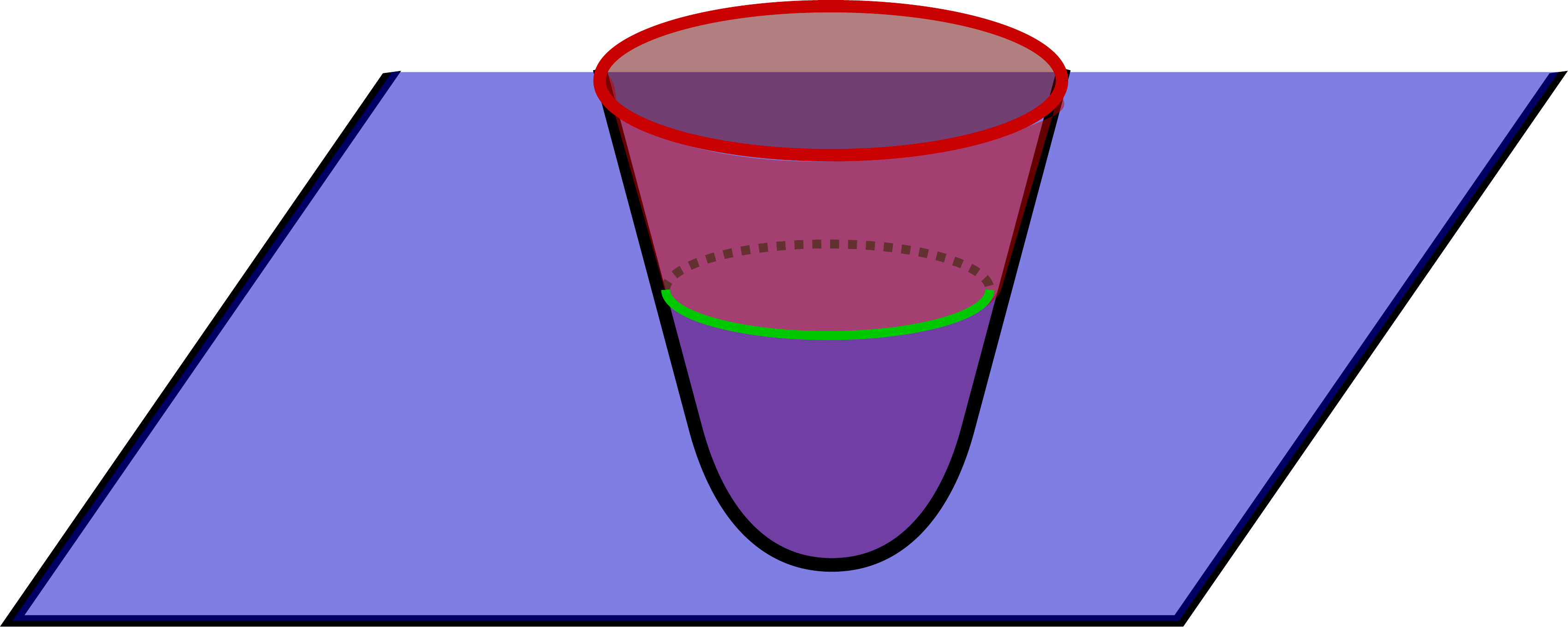}};]
         \end{tikzpicture}
         \caption{The moment before, during, and after a type~\ref{CP:int} critical point.}
         \label{fig: interior crit pt}
     \end{subfigure}

     \begin{subfigure}[b]{0.9\textwidth}
         \centering
         \begin{tikzpicture}
         \node at (0,0){\includegraphics[width=.25\textwidth]{InteriorCP1.pdf}};]
         \draw[black, thick](.1,-.38)--({2.35-3},-1.2);
         \node[black, right] at (-.3, -.8) {$\alpha$};
         \node at (3,0){\includegraphics[width=.25\textwidth]{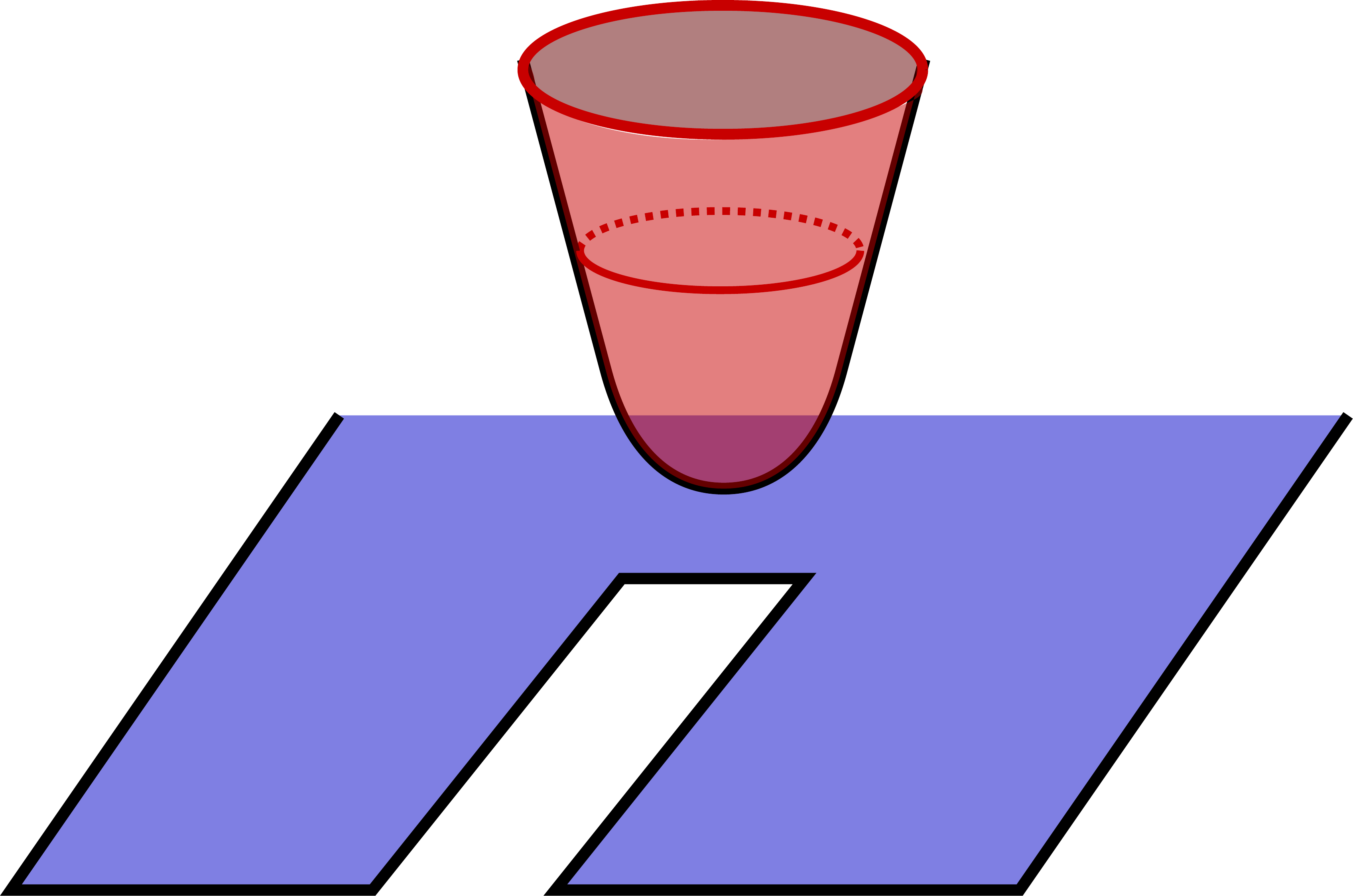}};]
         \node at (6,-.2){\includegraphics[width=.25\textwidth]{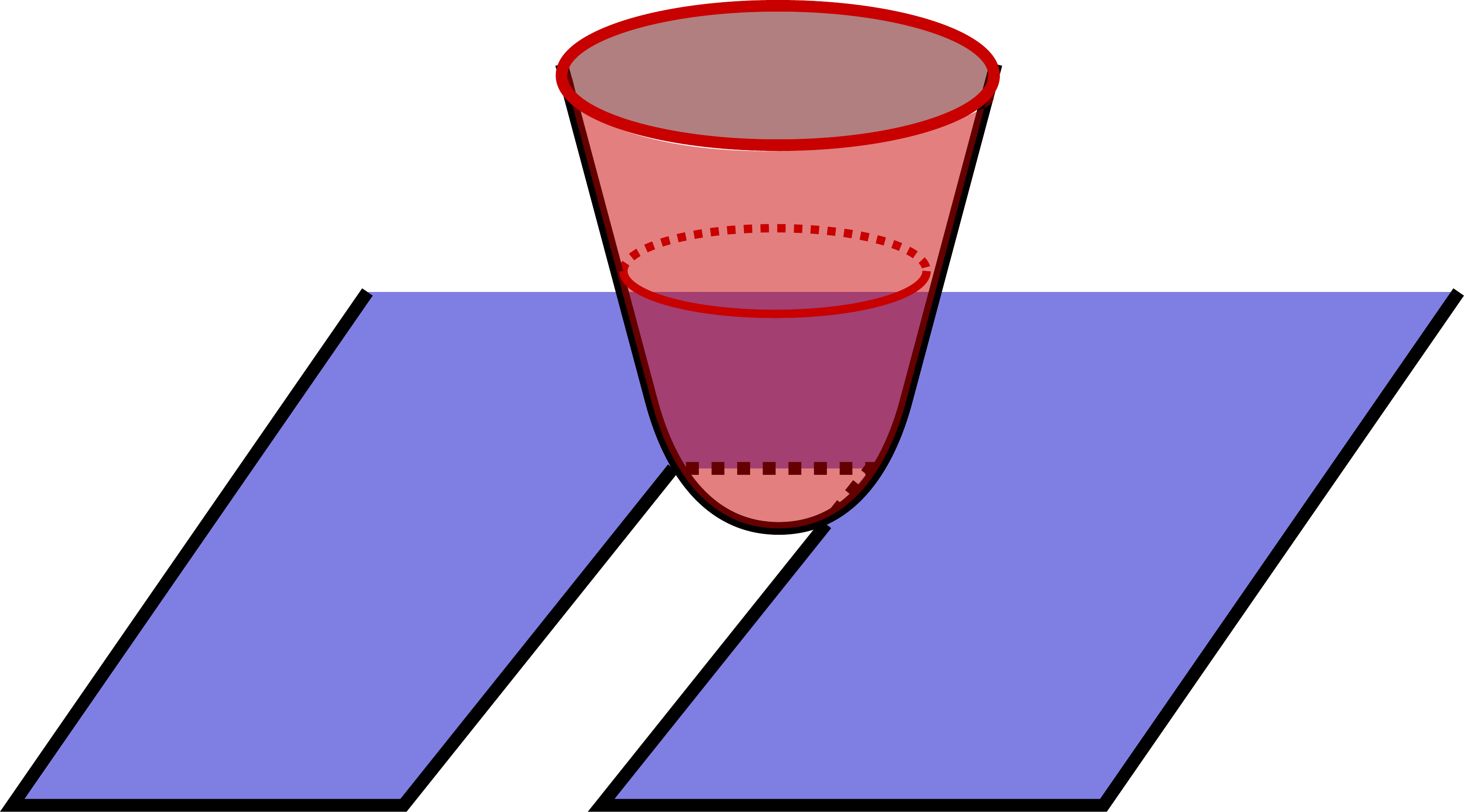}};]
         \node at (9,-.5){\includegraphics[width=.25\textwidth]{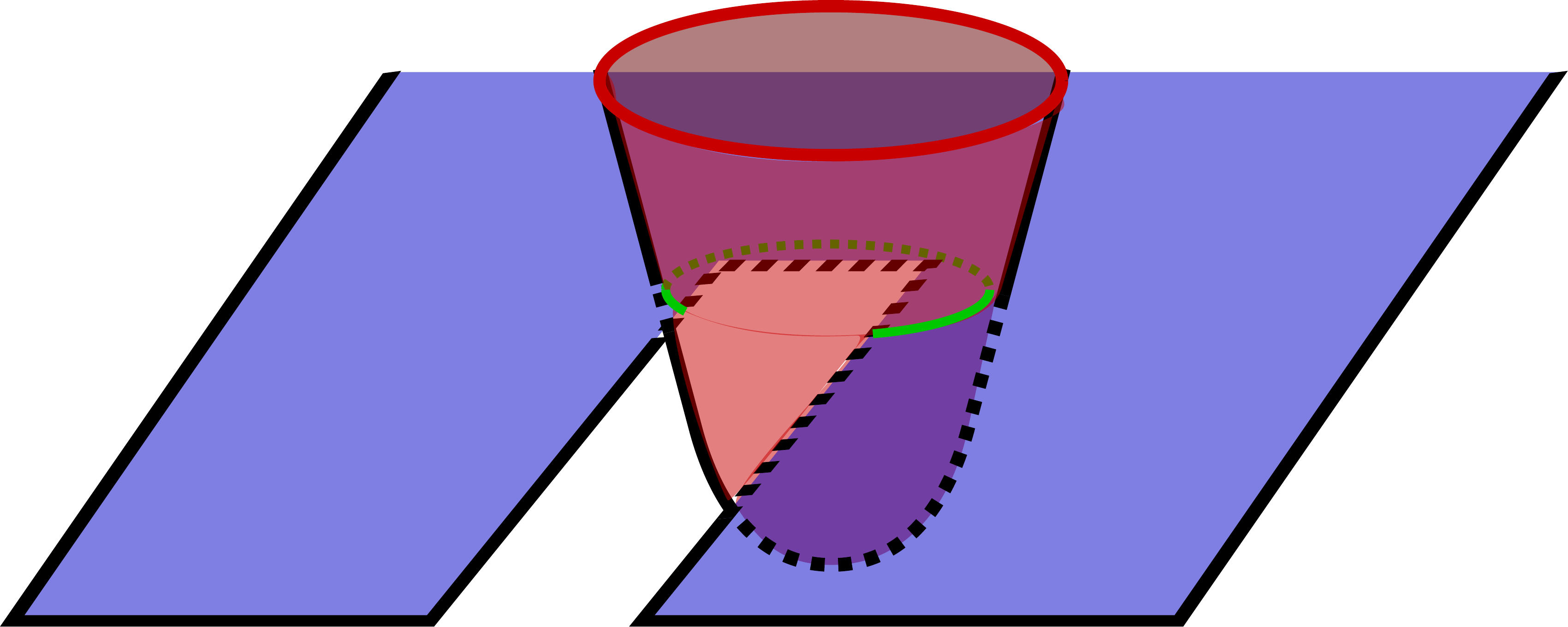}};]
         \node at (12,-.5){\includegraphics[width=.25\textwidth]{InteriorCP3.pdf}};]
         \end{tikzpicture}
         \caption{A push along an arc $\alpha$ replaces this type~\ref{CP:int} critical point by type~\ref{CP:bdry} critical points.}
         \label{fig: push subfig}
     \end{subfigure}
     
        \caption{}
        \label{fig: push along an arc}
\end{figure}

\begin{definition}\label{def:pushmove}
Let $\Phi$ be a collection of isotopies between surface systems $F$ and $F'$.  Pick an embedded arc  $\alpha\subseteq F_i$ with one endpoint in $\bdry F_i$ and which otherwise is interior to $F_i$ as well as an interval $[t_0,t_1]\subseteq (0,1)$.   Let $U\subseteq F_i$ be a regular neighborhood of $\alpha$  disjoint from 
$(\Phi_i^t)^{-1}(\bdry F_j^t)$
 for all $t\in [a,b]$ and all $j\neq i$.   Let $r^t:F_i\to F_i-U$ be an isotopy, supported in a small neighborhood of $U$, with $r^0=\Id_{F_i}$ and $r^1(F_i) \subseteq F_i-U$.   A new collection of isotopies, $(\Phi')^t$ is given by concatenating  $\Phi^t$ over $[0,t_0]$, $\Phi^{t_0}\circ r^t$ over $[0,1]$, $\Phi^t\circ r^1$ over $[t_0, t_1]$, $\Phi^{t_1}\circ r^{1-t}$ from $t=0$ to $t=1$, and $\Phi^t$ over $[t_1, 1]$. 
 A schematic appears in Figure~\ref{fig: push along an arc} and an explicit formula appears in \pref{eqn:push} below. 
The \emph{push along an arc move} replaces $\Phi$ by $\Phi'$.
\end{definition}

\begin{equation}\label{eqn:push}
\begin{array}{rcl}
(\Phi')_i(x,t) &=& \begin{cases}
\Phi_i(x,t) &
\text{if } 0\le t\le t_0-\epsilon
\\
\Phi_i\left( r^t\left(x, \frac{1}{\epsilon}(t-t_0+\epsilon ) \right), t_0-\epsilon\right)
&\text{if }t_0-\epsilon\le t\le t_0\\
\Phi_i\left(r^t(x,1), \frac{t_1-t_0+2\epsilon}{t_1-t_0}(t-t_0)+t_0-\epsilon \right)
&\text{if }t_0\le t\le t_1\\
\Phi_i\left(r^t\left(x,\frac{-1}{\epsilon}(t-t_1-\epsilon)\right), t_1+\epsilon\right)
&\text{if }t_1\le t\le t_1+\epsilon
\\
\Phi_i(x,t) &\text{if } t_1+\epsilon\le t\le 1
\end{cases}
\\
\text{For }j\neq i,~(\Phi')_j(x,t) &=& \begin{cases}
\Phi_j(x,t) &\text{if } 0\le t\le t_0-\epsilon\\
\Phi_j\left(x, t_0-\epsilon\right)
&\text{if }t_0-\epsilon\le t\le t_0\\
\Phi_j\left(x, \frac{t_1-t_0+2\epsilon}{t_1-t_0}(t-t_0)+t_0-\epsilon \right)
&\text{if }t_0\le t\le t_1\\
\Phi_j\left(x, t_1+\epsilon\right)
&\text{if }t_1\le t\le t_1+\epsilon
\\
\Phi_j(x,t) &\text{if } t_1+\epsilon\le t\le 1
\end{cases}
\end{array}
\end{equation}
There are some immediate observations.
\begin{itemize}

\item As demonstrated in Figure~\ref{fig: push along an arc}, if $\alpha$ contains a type~\ref{CP:int} critical point of $\Phi$ at time $t\in [t_0,t_1]$ then pushing along $\alpha$ over $[t_0,t_1]$ reduces the number of type~\ref{CP:int} critical points by at least one.  The push along an arc move can also be used to eliminate critical points of type~\ref{CP:triple} and \ref{CP:quadruple}.

\item Pushing along an arc introduces no new critical points of type~\ref{CP:int}, \ref{CP:triple}, or \ref{CP:quadruple}, although it can introduce critical points of type~\ref{CP:bdry} and \ref{CP:bdryTriple}.  
\end{itemize}

As a consequence, given any union of isotopies from one surface system to another, we can use the push along an arc move to eliminate all but two types of critical times/points.  

\begin{proposition}\label{lem: no interior CP's}
Let $F=F_1\cup F_2\cup \dots \cup F_n$ and $F'=F_1'\cup F_2'\cup \dots \cup F_n'$ be surface systems which are related by a collection of isotopies.
 There is a new collection of isotopies $\Phi'$ from $F$ to $F'$ which has only critical points of type~\ref{CP:bdry} and \ref{CP:bdryTriple}.
\end{proposition}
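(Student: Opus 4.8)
The plan is to induct on the total number of critical points of type \ref{CP:int}, \ref{CP:triple}, and \ref{CP:quadruple}, using the push along an arc move of Definition~\ref{def:pushmove} to strictly decrease this count while introducing no new critical points of these three types. Since there are finitely many critical times and, after perturbation, each critical time carries a single critical point, it suffices to show: if $\Phi$ has a critical point of type \ref{CP:int}, \ref{CP:triple}, or \ref{CP:quadruple} at some time $t_*\in(0,1)$, then we can replace $\Phi$ by a collection of isotopies $\Phi'$ from $F$ to $F'$ which agrees with $\Phi$ outside a small time interval around $t_*$, has no critical points of these three types inside that interval, and introduces none elsewhere. The second bullet after \pref{eqn:push} already records that the push move introduces no new critical points of type \ref{CP:int}, \ref{CP:triple}, or \ref{CP:quadruple} (only possibly new type \ref{CP:bdry} and \ref{CP:bdryTriple} points), so the only real work is the local analysis at $t_*$.

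First I would handle the type \ref{CP:int} case, which is the model situation pictured in Figure~\ref{fig: push along an arc}. At a type \ref{CP:int} critical time two sheets of $F_i$ and $F_j$ become tangent and then cross; choosing $t_0<t_*<t_1$ with no other critical time in $[t_0,t_1]$, the intersection $F_i^t\cap F_j^t$ for $t$ slightly past $t_*$ contains a small circle or arc component bounding a disk-like region on $F_i$; let $\alpha\subseteq F_i$ be an embedded arc running from a point of $\bdry F_i$ to this local intersection locus, chosen generically so that it misses all triple-point loci and is disjoint from $(\Phi_i^t)^{-1}(\bdry F_j^t)$ for all $t\in[t_0,t_1]$ and all $j\neq i$ (such an $\alpha$ exists because those preimages are, for each $t$, a $1$-complex in the surface $F_i$, and a further small perturbation of $\Phi$ makes the relevant union over $t\in[t_0,t_1]$ avoid a generic arc). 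Pushing $F_i$ along $\alpha$ over $[t_0,t_1]$, i.e. excising a regular neighborhood $U$ of $\alpha$ from $F_i$ before time $t_*$ and reinserting it afterward, removes the tangency from the part of $F_i$ that is moving, so the type \ref{CP:int} point disappears; the cost is the two type \ref{CP:bdry} critical points created when the retraction $r^t$ sweeps $U$ across $\bdry F_j^t$. The type \ref{CP:triple} case is identical in spirit: here three sheets meet and the push removes one of the three sheets (along an arc in, say, $F_i$) from a neighborhood of the would-be triple point, trading it for type \ref{CP:bdryTriple} events. The type \ref{CP:quadruple} case likewise reduces to pushing one sheet off the locus where four sheets would meet.

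The main obstacle is the careful choice of the push arc $\alpha$ and the verification that a single push genuinely lowers the count rather than merely relocating it — in particular one must ensure $\alpha$ can be taken disjoint, for the whole time interval $[t_0,t_1]$, from the preimages of all other boundary curves (so that the hypotheses of Definition~\ref{def:pushmove} are met) and from the preimages of the other intersection and triple-point loci (so that pushing does not accidentally create a new type \ref{CP:int}, \ref{CP:triple}, or \ref{CP:quadruple} point). This is where one invokes general position: shrinking $[t_0,t_1]$ so it contains only the single critical time $t_*$, the relevant loci traced out in $F_i\times[t_0,t_1]$ have codimension large enough that a generic arc $\alpha\times[t_0,t_1]$ (in fact a fixed arc $\alpha$ in $F_i$, after a small ambient perturbation of $\Phi$ near $t_*$) avoids them, and the excised neighborhood $U$ can be taken thin enough to stay in that avoided region. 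Granting this general-position input, each push strictly decreases the number of type \ref{CP:int}+\ref{CP:triple}+\ref{CP:quadruple} critical points by exactly one and changes nothing outside $[t_0-\epsilon,t_1+\epsilon]$, so iterating finitely many times yields the desired $\Phi'$ with only type \ref{CP:bdry} and \ref{CP:bdryTriple} critical points. $\hfill\qed$
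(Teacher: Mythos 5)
Your proposal is correct and follows essentially the same approach as the paper: the paper itself proves this proposition only implicitly, via the two bulleted observations after Definition~\ref{def:pushmove} (pushing along a suitably chosen arc removes a type~\ref{CP:int}, \ref{CP:triple}, or \ref{CP:quadruple} critical point and introduces only type~\ref{CP:bdry} and \ref{CP:bdryTriple} points) followed by ``As a consequence\ldots'', so your induction and the general-position discussion for choosing $\alpha$ simply make explicit what the paper takes as read. One small imprecision worth fixing: the new type~\ref{CP:bdry} critical points are not produced by ``$r^t$ sweeping $U$ across $\bdry F_j^t$'' --- by construction $U$ stays disjoint from $(\Phi_i^t)^{-1}(\bdry F_j^t)$, and the retraction keeps the image of $F_i$ inside $F_i^t$ --- rather they arise because the retracted boundary curve $\Phi_i^t(r^1(\bdry F_i))$ sweeps across $F_j^t$ wherever $\alpha$ meets $(\Phi_i^t)^{-1}(F_j^t)$, so it is the tangency ``$\bdry F_i$ against $F_j$'' (rather than ``$F_i$ against $\bdry F_j$'') that gets created.
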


Thus, it suffices to explore collections of isotopies between C-complexes which have only critical times of types~\ref{CP:bdry} and \ref{CP:bdryTriple}  and  see how these change the surface system.
Using the push along an arc move, we may further control what intersections appear.  While we do so formally in Proposition~\ref{pairingcp}, we provide an idea here.  Since there are no critical points of type~\ref{CP:int}, the only way that a circle intersection can appear is when the two endpoints of a ribbon intersection join together at a type~\ref{CP:bdry} critical point.  By performing a push along an arc crossing this ribbon, we may split this ribbon before that critical time occurs, thus preventing the circle from ever appearing.  Therefore, we can avoid circle intersections in a collection of isotopies.  While we cannot prevent ribbon intersections from appearing in $F^t$, we can split a ribbon intersection into two clasps with a push along an arc as soon as that ribbon appears.  Similarly, we may eliminate triple points immediately after they appear.  There are two technical difficulties.  First, the various circles, ribbons, and triple points in $F^t$ may change location as $t$ changes.  The intersection we would like to simplify by a push along an arc may escape that arc.  Secondly, the various preimage points $(\Phi^t_i)^{-1}(\bdry F_j^t)\subseteq F_i$ will also wander about.  If one of them passes through the arc we use to push, then the resulting $\Phi'$ will no longer restrict to an isotopy on $\bdry F$.
  The following proposition resolves these issues and gives  control over the intersections appearing in $F^t$.

\begin{proposition}\label{pairingcp}
Let $F$ and $F'$ be surface systems related by a collection of isotopies.  There exists an additional collection of isotopies $\Phi$ from $F$ to $F'$ such that there are disjoint closed subintervals of $[0,1]$, 
$I_k=[a_k, b_k]$, for which either $a_k$ and $b_k$ are both   type~\ref{CP:bdry} critical points or are both   type~\ref{CP:bdryTriple}, and $\Phi$ has no other critical times. 
In addition, these subintervals satisfy the following conditions:
\begin{itemize}
\item For all $t \not \in \Cup I_k $,
$F^t$ is a C-complex. 
\item For all $t \in I_k$, $F^t$ has one ribbon intersection and no triple points or one triple point and no ribbon intersections.
\item for all $t\in [0,1]$, $F^t$ has no circle intersections.
\end{itemize}
\end{proposition}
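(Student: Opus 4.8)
The plan is to start from the collection of isotopies $\Phi'$ produced by Proposition~\ref{lem: no interior CP's}, which has only critical points of types~\ref{CP:bdry} and~\ref{CP:bdryTriple}, and to surger it using push-along-an-arc moves so as to isolate each remaining critical time. First I would analyze the local model of a type~\ref{CP:bdry} critical point: since there are no type~\ref{CP:int} critical points, the intersection $1$-manifold $F_i\cap F_j$ changes combinatorially only when an endpoint of an arc of $F_i\cap F_j$ crosses $\partial F_i$ or $\partial F_j$. Enumerating the local pictures, such a crossing either (a) converts a clasp to a ribbon or vice versa, (b) merges the two endpoints of a ribbon on the same boundary component so that the ribbon closes up into a circle (or the reverse), or (c) creates/destroys a short clasp. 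In particular, the \emph{only} mechanism by which a circle intersection can appear is case~(b). The key observation is that if a ribbon intersection $\rho$ in $F^t$ is about to close up into a circle at a type~\ref{CP:bdry} time $t_\star$, I may choose an arc $\alpha$ running from $\rho$ to $\partial F_i$ inside $F_i$, avoiding the wandering preimages $(\Phi_i^t)^{-1}(\partial F_j^t)$ for $t$ in a neighborhood of $t_\star$ (this is a generic condition, since those preimages form a finite union of arcs in $F_i\times[0,1]$ missing a generic $\alpha\times\{\text{nbhd}\}$), and perform a push along $\alpha$ over a subinterval ending just before $t_\star$. This splits $\rho$ into two clasps before the bad time, so the circle never forms. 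Iterating over all such events removes all circle intersections; this establishes the third bullet.

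Next I would handle the first two bullets simultaneously by making each critical time "solitary" in the sense that immediately after a ribbon or triple point is born, it is removed. Concretely, list the type~\ref{CP:bdry} critical times $s_1<\dots<s_m$ at which the number of ribbon intersections increases and the type~\ref{CP:bdryTriple} times $u_1<\dots<u_p$ at which a triple point is born. For each such time $s_\ell$, the new ribbon intersection $\rho_\ell$ persists for some interval $[s_\ell,\tau)$; I would insert a push along an arc from $\rho_\ell$ to the boundary at a time $a_\ell$ slightly greater than $s_\ell$, splitting $\rho_\ell$ into two clasps, and symmetrically handle the inverse event. The subinterval $I_\ell=[a_\ell,b_\ell]$ — from just before the ribbon is born to just after it is split — then contains exactly one ribbon intersection and no triple point, and outside $\bigcup I_\ell$ (together with the triple-point intervals constructed the same way) the surface system has only clasps, hence is a C-complex. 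The same wandering-preimage argument as above guarantees the pushing arc can be chosen so that $\Phi$ still restricts to an isotopy on $\partial F$, and a small perturbation keeps the new type~\ref{CP:bdry} critical points from the pushes generic and disjoint in time from everything else. For the triple-point intervals $I_k$ of type~\ref{CP:bdryTriple}, one similarly shrinks the lifespan of each triple point to a tiny interval by pushing along an arc crossing the relevant ribbon (a triple point $F_i\cap F_j\cap F_k$ lies on an arc of, say, $F_i\cap F_j$; pushing that arc off of $F_k$ removes the triple point), and one arranges that during $I_k$ the surface system has one triple point and no ribbon intersection.

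The main obstacle I expect is the bookkeeping needed to make these local modifications \emph{compatible and non-interfering}: each push along an arc introduces its own new type~\ref{CP:bdry} critical points, and one must verify that the induction terminates — that fixing one bad event does not create new ones of the same kind elsewhere. The way to control this is to observe that the push-along-an-arc move is supported in an arbitrarily small neighborhood (in $F_i\times[0,1]$) of the chosen arc and time interval, so it can be made to affect only the targeted intersection component and to take place in a time window disjoint from all previously-scheduled critical times; thus the events can be processed one at a time, each in its own tiny time window, and the total count of ribbon-birth and triple-point-birth events is finite and strictly decreasing under this scheme (or at worst constant with the lifespans shrinking, after which a final relabeling of the intervals $I_k$ finishes the argument). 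A secondary technical point, already flagged in the paragraph preceding the proposition, is that the intersection one wishes to push may "escape" the chosen arc as $t$ varies; this is handled by choosing the pushing interval short enough that, by continuity (and compactness of $[0,1]$), the relevant ribbon or triple point stays within the regular neighborhood $U$ of $\alpha$ throughout — again a genericity/continuity argument rather than a computation.
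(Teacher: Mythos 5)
Your high-level plan --- start from Proposition~\ref{lem: no interior CP's}, show circles can only appear by a ribbon closing up and pre-empt them with a push along an arc, then isolate ribbon and triple-point lifespans so that nothing else happens during them --- is the same route the paper takes. But the places you flag as ``technical bookkeeping'' and dismiss with genericity or continuity are exactly where the paper's argument does real work, and your shortcuts do not hold up.

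The central gap is the claim that a single arc $\alpha$ can be chosen, by genericity, to avoid the wandering preimages $(\Phi_i^t)^{-1}(\bdry F_j^t)$ for all $t$ in the relevant interval. Those preimages sweep out $1$-dimensional curves in the $3$-manifold $F_i\times[0,1]$, while $\alpha\times[\text{interval}]$ is $2$-dimensional; a generic such pair intersects in a finite but typically nonempty set of points. You cannot avoid this by shrinking the time interval: once the ribbon/circle is split, the push-along-an-arc of Definition~\ref{def:pushmove} undoes itself at the end of the interval, so the pushing interval must cover essentially the entire lifespan of the intersection, and that lifespan is whatever it is. The paper handles this with two separate devices: for circles, an alternating two-arc trick (push along $\alpha$ on subintervals avoiding the finitely many bad times $q_\ell$, and along a parallel arc $\alpha'$ on short intervals around each $q_\ell$); for ribbons and triple points, a Lebesgue-number partition of the lifespan with a possibly different arc $\alpha_\ell$ on each piece. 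You also do not account for the fact that the intersection itself wanders in $F_i$: the paper first reparametrizes $\Phi_i$ via the isotopy extension theorem to fix a reference arc $\beta$ inside the intersection over the whole interval, and only then chooses the pushing arcs to terminate on $\beta$. Your appeal to ``continuity and compactness'' so the ribbon ``stays within the regular neighborhood $U$ of $\alpha$'' does not address this, and it is not true for a long interval.

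Two further missing ingredients: (a) for the Lebesgue-number step to produce arcs running from $\bdry F_i$ to $\beta$ disjoint from the other intersections, one must first arrange that every component of $F_i\bk\Cup_{j\neq i}(\Phi_i^t)^{-1}(F_j^t)$ meets $\bdry F_i$ for all $t$; this is a separate push-along-arc argument in the paper and is not automatic. (b) Your termination claim (``the total count of ribbon-birth and triple-point-birth events is finite and strictly decreasing'') is wrong as stated, since each push introduces two new type~\ref{CP:bdry} critical times; the quantity that strictly decreases is the number of ribbon intersections (resp.\ triple points) that persist through a critical time, and one must check that the newly-created critical times are ones at which the relevant ribbon does not persist. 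Your sketch would need all of these points repaired to become a proof.
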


\begin{proof}

Let $\Phi=\{\Phi_i\}$ be a collection of isotopies between C-complexes $F$ and $F'$.  By Proposition~\ref{lem: no interior CP's} we can assume that $\Phi$ has only critical points of type~\ref{CP:bdry} and \ref{CP:bdryTriple}.  Before we can explain how to eliminate circle intersections, we need to explain how to fix a single arc $\beta$ in a circle intersection which persists in $F_i^t\cap F_j^t$ for all $t$ in a reasonably long subinterval of $[0,1]$.  The argument will require a foray into smooth topology.

Let $\mathbb{F}_i = \{(\Phi_i^t(x),t)\mid x\in F_i, t\in [0,1]\} \subseteq S^3\times[0,1]$ be the trace of the isotopy $\Phi_i$.  After a small perturbation preserving the property that $\Phi$ is a collection of isotopies we arrange that $\bbF_i$ is transverse to $\bbF_j$, and to $\bdry \bbF_{j}$ for all $i\neq j$.
  We now have that $\bbX_{ij}:=\bbF_i\cap \bbF_j$ is a smooth 2-manifold in $S^3\times[0,1]$.  We may further arrange that $\bbF_i$ is transverse to $\bbX_{jk}$ and $\bdry \bbX_{jk}$.  It follows that $\bbX_{ijk} := \bbF_{i}\cap \bbX_{jk} =  \bbF_i\cap \bbF_j\cap \bbF_k$ is a smooth 1-manifold.  It is easy to check the following assertions:
\begin{enumerate}
\item $t$ is a type~\ref{CP:int} critical time with critical point $x\in F_i^t\cap F_j^t$ if and only if $(x,t)$ is a critical point for $p|_{\bbX_{ij}}$, the restriction of the projection $p:S^3\times [0,1]\to [0,1]$ to $\bbX_{ij}$.  
\item $t$ is a type~\ref{CP:bdry} critical time with critical point $x\in F_i^t\cap F_j^t$ if and only if $t\in (0,1)$ and $(x,t)$ is a critical point for $p|_{\bdry\bbX_{ij}}$.  
\item $t$ is a type~\ref{CP:triple} critical time with critical point $x\in F_i^t\cap F_j^t\cap F_k^t$ if and only if $(x,t)$ is a critical point for $p|_{\bbX_{ijk}}$. 
\item $t$ is a type~\ref{CP:bdryTriple} critical time with critical point $x\in F_i^t\cap F_j^t\cap F_k^t$ if and only if $(x,t)\in\bdry \bbX_{ijk}$.  
\item $t$ is a type~\ref{CP:quadruple} critical time with critical point $x\in F_i^t\cap F_j^t\cap F_k^t\cap F_\ell^t$ if and only if $(x,t)\in \bbF_i\cap \bbF_j\cap \bbF_k\cap \bbF_\ell$.  
\end{enumerate}
Notice that as $\bdry  \bbX_{ijk}$ and $\bbF_i\cap \bbF_j\cap \bbF_k\cap \bbF_\ell$ are each $0$-manifolds, any points in these will be critical points for any map to $[0,1]$.  

We  explain the first of these claims; the rest follow from the same logic.  Let $(x,t)\in \bbX_{ij}$.  Since $\bbF_i$ is the trace of an isotopy, there is a tangent vector to $\bbF_i$ at $(x,t)$, $\vect v_i\in T_{\bbF_{i}}(x,t)$, with $p_*(\vect v_i)\neq 0$.  Similarly, there is a $\vect v_j\in T_{\bbF_j}(x,t)$ with $p_*(\vect v_j)\neq 0$.  Since $T_{[0,1]}(t)$ is 1-dimensional, we may normalize $\vect v_i$ and $\vect v_j$ so that $p_*(\vect v_i - \vect v_j) = 0$.  This means that $\vect v_i - \vect v_j\in T_{S^3\times\{t\}}(x,t)$.  If $x$ is not a type~\ref{CP:int} critical point, then by the transversality of $F_i^t$ and $F_j^t$ in $S^3$, there are some $\vect u_i\in T_{F_i^t\times\{t\}}(x,t)$ and $\vect u_j\in T_{F_j^t\times\{t\}}(x,t)$ so that $\vect v_i - \vect v_j = \vect u_i-\vect u_j$.  
Observe that $\vect v_i-\vect u_i\in T_{\bbF_i}(x,t)$, $\vect v_j-\vect u_j\in T_{\bbF_j}(x,t)$, and $\vect v_i-\vect u_i = \vect v_j-\vect u_j$.  We now see that $\vect v_i-\vect u_i\in T_{\bbF_{i}}(x,t) \cap T_{\bbF_{j}}(x,t) = T_{\bbX_{ij}}(x,t)$ and $p_*(\vect v_i-\vect u_i) = p_*(\vect v_i)\neq 0$.  We conclude that $(x,t)$ is regular for $p|_{\bbX_{ij}}$.

Conversely, suppose that $(x,t)\in \bbX_{ij}$ is regular for $p|_{\bbX_{ij}}$.  Then there is some $\vect v\in T_{\bbX_{ij}}(x,t)$ with $p_*(\vect v)\neq 0$.  Let $\{\vect x_i, \vect y_i\}$ and $\{\vect x_j, \vect y_j\}$ be bases for $T_{F_i^t\times\{t\}}(x,t)$ and $T_{F_j^t\times\{t\}}(x,t)$, respectively. Then $\{\vect v, \vect x_i, \vect y_i\}$ and $\{\vect v, \vect x_j, \vect y_j\}$ give bases for $T_{\bbF_i}(x,t)$ and $T_{\bbF_j}(x,t)$, respectively.  Since $\bbF_i$ and $\bbF_j$ are transverse, $\{\vect v, \vect x_i, \vect y_i, \vect x_j, \vect y_j\}$ is a generating set for $T_{S^3\times[0,1]}(x,t)$.  It follows that $\{\vect x_i, \vect y_i, \vect x_j, \vect y_j\}$ generates $T_{S^3\times\{t\}}(x,t)$, so that $x$ is not a type~\ref{CP:int} critical point for $F_i^t\cap F_j^t$.  

Given a generic point $(x,s)\in F_i^s\cap F_j^s\times\{s\}\subseteq \bbX_{ij}$ we may flow in the direction of the gradient of $p|_{\bbX_{ij}}$ to produce a curve $(x(t),t)$ with $t\in [t_0, t_1]$ in $\bbX_{ij}$ running from a local minimum of $p|_{\bbX_{ij}}$ to a local maximum.  As 
$\Phi$ has no Type~\ref{CP:int} critical points, 
$p|_{\bbX_{ij}}$ has no critical points.  Thus,
either $t_0=0$ or $(x(t_0), t_0)$ is a type~\ref{CP:bdry} critical point of $\Phi$.  Similarly, either $t_1=1$ or $ (x(t_1), t_1)$ is a type~\ref{CP:bdry} critical point.
The only type~\ref{CP:bdry} critical point which result in a local minimum is a birth of a ribbon, as in Figure~\ref{fig: birth of a ribbon}.  The only type~\ref{CP:bdry} critical point which will result in a local maximum is a death of a ribbon (Figure~\ref{fig: birth of a ribbon} in reverse).  Pick $\epsilon>0$ small enough that $(t_0, t_0+\epsilon]$ and $[t_1-\epsilon, t_1)$ are disjoint and contain no critical times for $\Phi$.  The map $y(t) = (\Phi_i^t)^{-1}(x(t))$ with $t\in [t_0+\epsilon, t_1-\epsilon]$ gives an isotopy of a point interior to $F_i$.  By the isotopy extension theorem there is an isotopy $\Psi:F_i\times[0,1]\to F_i$ such that $\Psi^t(y(t_0+\epsilon)) = y(t)$ for all $t\in [t_0+\epsilon, t_1-\epsilon]$.  By replacing $\Phi_i^t$ with $\Phi_i^t\circ(\Psi^t)$ we arrange that $y(t)$ is constant in time.  

\begin{figure}[h]
     \centering
     \begin{subfigure}[b]{0.45\textwidth}
         \centering
         \begin{tikzpicture}
         \node at (0,0){\includegraphics[width=.9\textwidth]{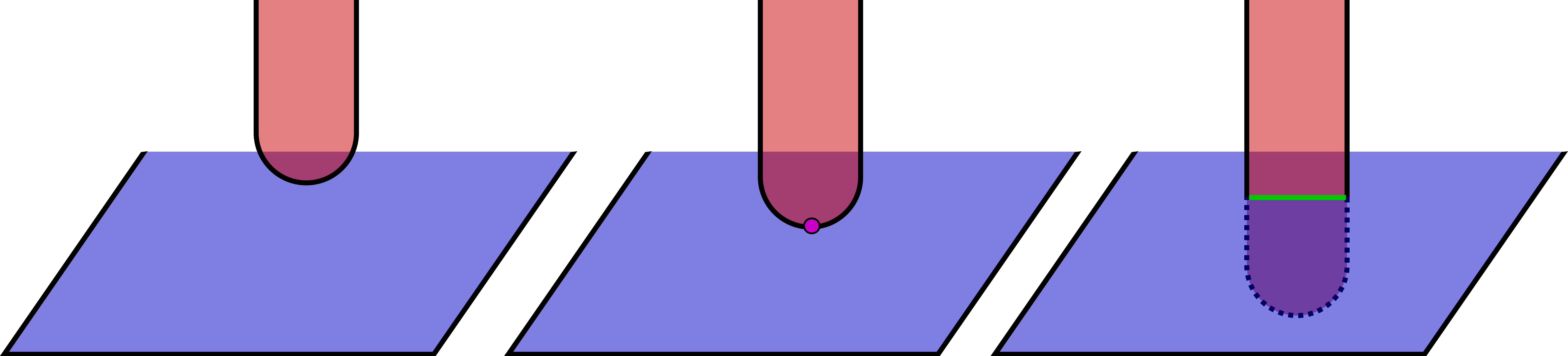}};]
         \end{tikzpicture}
         \caption{Birth of a ribbon.}
         \label{fig: birth of a ribbon}
     \end{subfigure}
     \begin{subfigure}[b]{0.45\textwidth}
         \centering
         \begin{tikzpicture}
         \node at (0,0){\includegraphics[width=.9\textwidth]{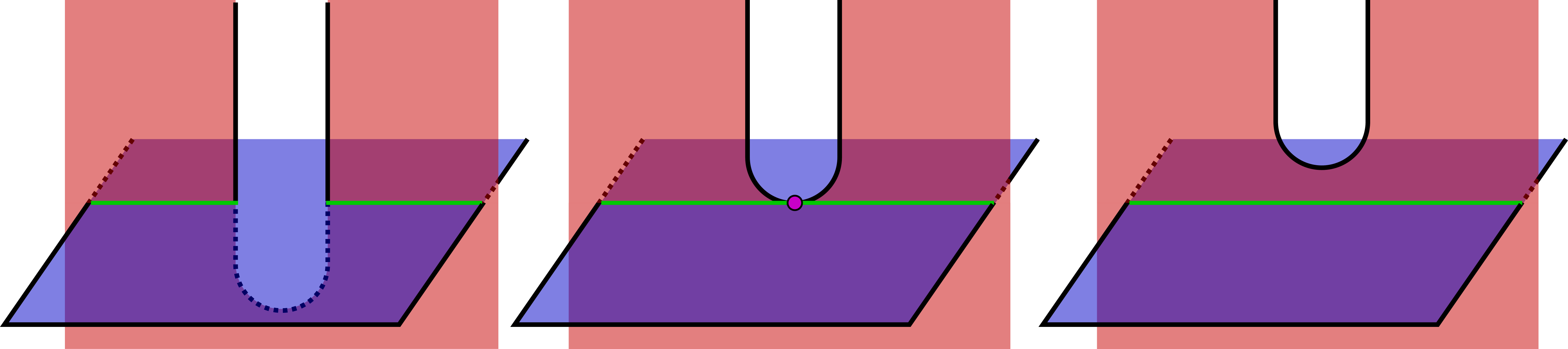}};]
         \end{tikzpicture}
         \caption{Merging two clasps into a ribbon.}
         \label{fig: merge clasps}
     \end{subfigure}
     \begin{subfigure}[b]{0.45\textwidth}
         \centering
         \begin{tikzpicture}
         \node at (0,0){\includegraphics[width=.9\textwidth]{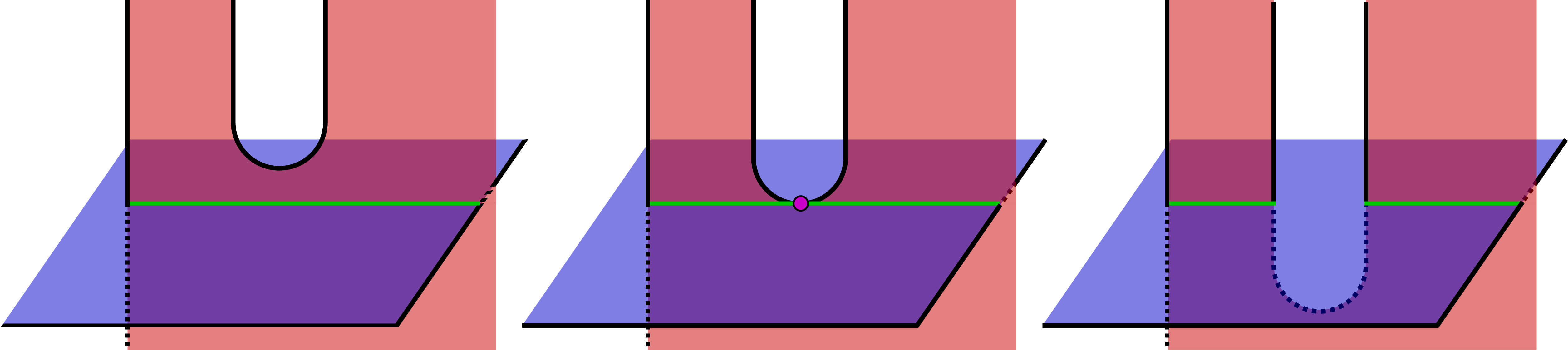}};]
         \end{tikzpicture}
         \caption{Splitting a clasp into a clasp and a ribbon.}
         \label{fig: split a clasp}
     \end{subfigure}
     \begin{subfigure}[b]{0.45\textwidth}
         \centering
         \begin{tikzpicture}
         \node at (0,0){\includegraphics[width=.9\textwidth]{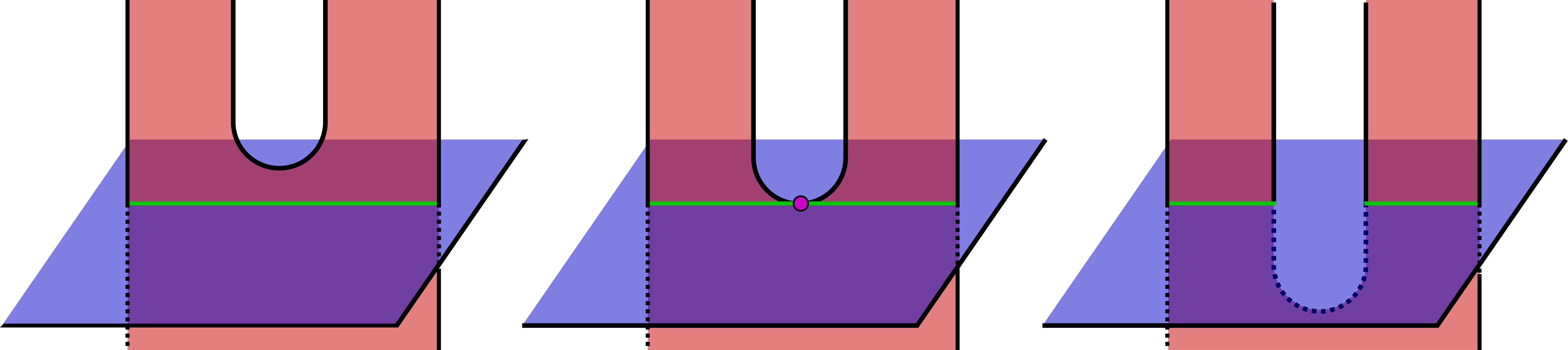}};]
         \end{tikzpicture}
         \caption{Splitting a ribbon into two ribbons.
         \\}
         \label{fig: split a ribbon}
     \end{subfigure}

        \caption{The ways that a type~\ref{CP:bdry} critical point can introduce a ribbon, assuming that a circle never appears.}
        \label{fig: ribbon appears}
\end{figure}

If we take a generic point $(x,s)\in F_i^s\cap F_j^s\times\{s\}$ together with a sufficiently small arc $a\subset F_i^s\cap F_j^s$ containing $x$ and follow the argument of the preceding paragraph, then we will produce an arc $\beta$ interior to $F_i$ so that $\Phi_i^t(\beta)\subseteq F_i^t\cap F_j^t$ for all $t\in [t_0+\epsilon, t_1-\epsilon]$ and $\Phi_i^s(\beta)=a$.  We will refer to this process as fixing the arc $\beta$ over the interval $[t_0+\epsilon, t_1-\epsilon]$.  

We are now ready to eliminate circle intersections.  Let $C\subseteq F_i^s\cap F_j^s$ be a circle intersection at time $s$.  As above, fix an arc $\beta$ interior to $F_i$ over the interval $[t_0+\epsilon, t_1+\epsilon]$ so that $\Phi_i^s(\beta)\subseteq C$ and for all $t\in [t_0+\epsilon, t_1+\epsilon]$, $\Phi_i^t(\beta)\subseteq F_i^t\cap F_j^t$.  The critical times $t_0$ and $t_1$ are either $0$ or $1$ or are births or deaths of ribbons.  Thus, for $t$ close to $t_0+\epsilon$, or $t_1+\epsilon$,  $\Phi_i^t(\beta)$ is contained in a ribbon or in a clasp.  Let $[s_0, s_1]\subseteq (t_0+\epsilon, t_1-\epsilon)$ be the maximal interval containing $s$ so that $\Phi_i^t(\beta)$ is contained in a circle intersection for all $t\in (s_0, s_1)$.  The interval $(s_0, s_1)$ can be thought of as the \emph{lifespan} of $C$.  Let $\alpha\subseteq F_i$ be an arc running from $\bdry F_i$ to $\beta$.  By picking $\alpha$ so that $\Phi_i(\alpha\times[s_0-\epsilon, s_1+\epsilon])$ is transverse to $\bdry \bbF_k^t$ for all $k$, we arrange that away from a finite set of times $q_1<\dots<q_r$, $\Phi_i^t(\alpha)$ is disjoint from $L_k$ for all $k\neq i$.  By pushing $\alpha$ off itself, we get another arc $\alpha'$ disjoint from $\alpha$ so that $\Phi_i^t(\alpha')$ is disjoint from $L_k$ for all $t\in (q_1-2\epsilon, q_1+2\epsilon)\cup\dots \cup (q_r-2\epsilon, q_r+2\epsilon)$ and all $k\neq i$.  

   Push along $\alpha$ over $[s_0-\epsilon, q_1-\epsilon]$, $[q_1+\epsilon, q_2-\epsilon]$, $\dots$, $[q_{r-1}+\epsilon, q_r-\epsilon]$, and $[q_r+\epsilon, s_1+\epsilon]$ and along $\alpha'$ over $[q_1-2\epsilon, q_1+2\epsilon]$, $\dots$, and $[q_r-2\epsilon, q_r+2\epsilon]$.  For all $t\in [s_0, s_1]$ the circle which contained $\Phi_i^t(\beta)$ has now been split into either one ribbon or into two ribbons.  Repeat this argument until all circle intersections have been removed for all times.

The reader will notice that $\Phi_i^t(\alpha)$ may contain some points in $F_i^t\cap F_j^t$ other than the point where it crosses $\Phi_i^t(\beta)$.  If it does, then we will split more arcs of intersection than we intended to.  This will not produce any new circle intersection, and so it is not relevant to the previous steps.  It later steps, however, it will be problematic. Fortunately, a similar technique to the paragraph above arranges that for each $t\in [0,1]$,  every component of complement in $F_i^t$ of the intersections with the other components of $F$ contains a point in $\bdry F_i$.  In brief, we push along arcs to reduce the number of components of $F_i - \Cup_{j\neq i} (\Phi_i^t)^{-1}(F_j^t)$ interior to $F_i$.

Now that we have eliminated all circle intersections, we
 shorten the lifespans of ribbon intersections.  We do so by arranging that that at any critical time the number of ribbons either changes from zero to one, from one to zero, or stays zero throughout.  Proceeding, let $s$ be a critical time not satisfying these conditions.  Then there is a ribbon intersection $R\subseteq F_i^s\cap F_j^s$ which persists for all $t\in (s-\epsilon, s+\epsilon)$ for a sufficiently small $\epsilon$.  More formally, we mean that if we start at a point on $R\times\{s\}\subseteq \bbX_{ij}$ and flow forward and backwards along the gradient to $p|_{\bbX_{ij}}$ then at least for $t\in (s-\epsilon, s+\epsilon)$ we will see only points $(x(t),t)$ with $x(t)$ sitting on a ribbon intersection in $F_{i}^t\cap F_j^t$.  Without loss of generality, we assume that $R$ is interior to $F_i^t$.  

As above we fix an arc $\beta \subseteq F_i$ so that $\Phi_i^s(\beta)\subseteq R$ and so that for some critical times $t_0$ and $t_1$ and all $t\in [t_0+\epsilon, t_1-\epsilon]$, $\Phi_i^t(\beta)\subseteq F_i^t\cap F_j^t$.  Recall that $\epsilon$ is picked small enough that $(t_0, t_0+\epsilon]$ and $[t_1-\epsilon, t_1)$ are disjoint and contain no critical times.    Pick a maximal subinterval $[s_0,s_1]\subseteq [t_0+\epsilon, t_1-\epsilon]$ containing $s$ so that for all $t\in (s_0, s_1)$, $\Phi_i^t(\beta)$ is contained in a ribbon interior to $F_i^t$.  

If $s_0=t_0+\epsilon$ then set $a_0=s_0$.  Otherwise, $s_0$ is a critical time when a ribbon appears, as in Figure~\ref{fig: ribbon appears}.  In this case set $a_0=s_0+\epsilon$.  Similarly, if $s_1=t_1-\epsilon$, then set $a_1=s_1$ and otherwise set $a_1=s_1-\epsilon$.  The interval $[a_0, a_1]$ is now an interval contained in the lifespan of the ribbon $r$ we saw at time $s$ and contains all critical times in ribbon's lifespan.  

Now we take advantage of the fact that we have arranged that for every $t\in [0,1]$, every component of $F_i - \Cup_{j\neq i} (\Phi_i^t)^{-1}(F_j^t)$ contains a point of $\bdry F_i$.  By a standard appeal to the Lebesgue number lemma, there exist some $a_0=q_0<q_1<\dots<q_n=a_1$ so that for each $\ell=1,\dots, n$ there is an arc $\alpha_\ell$ running from a point on $\bdry F_i$ to a point on $\beta$ which is otherwise disjoint from $\Sigma_i - \Cup_{j\neq i} (\Phi_i^t)^{-1}(F_j^t)$ for all $t\in [q_{\ell-1}, q_\ell]$.  These $q_\ell$ can be chosen so that $[q_{\ell}-\epsilon, q_{\ell}+\epsilon]$ contains no critical times.  

We now change $\Phi$ by pushing along each $\alpha_\ell$ over $[q_{\ell-1}+\epsilon, q_{\ell}-\epsilon]$.  Pushing along each of these arcs introduces exactly two new type~\ref{CP:bdry} critical times, one where a ribbon vanishes and another where it reappears.  Moreover, the ribbon containing $\Phi_i^t(\beta)$ is split into two clasps for all $t\in (q_{\ell-1}+\epsilon, q_{\ell}-\epsilon)$.  We have just reduced by one the number of ribbon intersections persisting at each critical time in $(a_0, a_1)$.   Iterate this until no ribbon intersection persists at any critical point.

We similarly arrange that no triple point persists at any critical time.  The argument is nearly identical to the procedure for controlling ribbon intersections and so we only highlight the differences.  Start with a triple point $x\in F_i^s\cap F_j^s\cap F_j^s$ which persists at a critical time $s$.  By flowing along the gradient to $p|_{\bbX_{ijk}}$ we fix over an interval a point $y\in F_i$ mapping to $x$.  A push along a collection of arcs from $\bdry F_i$ to $y$ reduces the number of triple points which persist at each critical time in the lifespan of the triple point $x$.  Depending on the precise implementation of this algorithm, new ribbon intersections may be introduced.  The techniques of the preceding paragraphs can now be used to split these into clasps without introducing new triple points.

We have now arranged that during the lifespan of any triple point or any ribbon intersection no other critical times can occur.  The existence of the claimed disjoint subintervals follows.  

\end{proof}

It remains to see how the paired critical points of Proposition~\ref{pairingcp} change the C-complex. Let $a$ be a type \ref{CP:bdry} critical time when a ribbon appears.  At the corresponding critical point we see one of the the local moves of Figure~\ref{fig: ribbon appears}.  As a consequence of Proposition~\ref{pairingcp}, at no time are there two ribbon intersections, so the move of Figure~\ref{fig: split a ribbon} does not occur.  As in Figure~\ref{fig: Split Clasp Redundant}, the move of Figure~\ref{fig: split a clasp} can be realized in terms of Figures~\ref{fig: birth of a ribbon}, and \ref{fig: merge clasps} and so we may assume it does not occur.    A type \ref{CP:bdry} critical time removes a ribbon by reversing these moves.  Thus, the effect of the paired critical times of type \ref{CP:bdry} is given by following one of these by the reverse of another.  Following Figure~\ref{fig: birth of a ribbon} with its own reverse can be obtained by an ambient isotopy.  Following Figure~\ref{fig: birth of a ribbon} by the reverse of~\ref{fig: merge clasps} results in a \ref{move: ribbon+push} move.  Following Figure~\ref{fig: merge clasps} with its reverse results in a \ref{move: push along different arc} move.

\begin{figure}[h]
         \centering
         \begin{tikzpicture}
         \node at (0,0){\includegraphics[width=.95\textwidth]{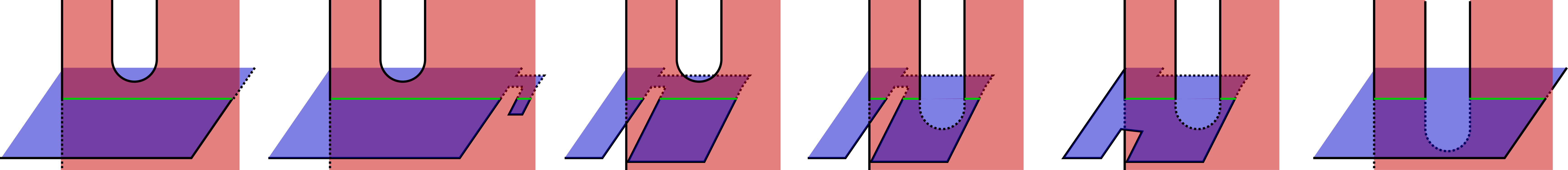}};]
         \node at (5.3,0){$\longrightarrow$};
         \node at (5.3,-1.2){Isotopy};
         \node at (2.8,0){$\longrightarrow$};
         \node at (2.8,-1.2){\pref{fig: merge clasps}};
         \node at (0.3,0){$\longrightarrow$};
         \node at (0.3,-1.2){\pref{fig: merge clasps}$^{-1}$};
         \node at (-2.05,0){$\longrightarrow$};
         \node at (-2.05,-1.2){Isotopy};
         \node at (-5,0){$\longrightarrow$};
         \node at (-5,-1.2){\pref{fig: birth of a ribbon}};
         \end{tikzpicture}
     
        \caption{
        The splitting of a clasp into a clasp and a ribbon (Figure \ref{fig: split a clasp}) is obtained by the birth of a ribbon (Figure \ref{fig: birth of a ribbon}), an ambient isotopy, splitting a ribbon into clasps (the reverse of Figure \ref{fig: merge clasps}), merging two clasps into a ribbon (Figure \ref{fig: merge clasps}) and finally an ambient isotopy.
        }
        \label{fig: Split Clasp Redundant}
\end{figure}

Next we analyze any  of the the paired type~\ref{CP:bdryTriple} critical times $a$ and $b$ of Proposition~\ref{pairingcp}.  At time $a$, a point $q\in F_i^a\cap \bdry (F_j^a\cap F_k^a)$ occurs.  As $\bdry (F_j^a\cap F_k^a)\subseteq \bdry F^a_j\cup \bdry F^a_k$, we assume that $q\in \bdry F^a_k$ without loss of generality. Since $F^a$ has only clasp intersections, $q$ is an endpoint of a clasp in $F^a_i\cap F^a_k$ and a clasp in $F^a_j\cap F^a_k$, and is interior to a clasp in $F^a_i\cap F^a_j$.  This moment is depicted in Figure~\ref{fig: Type IV crit.}.  The whole of Figure~\ref{fig: type IV paired} depicts the moments before this critical time, the lifespan of the resulting a triple point, the critical time $b$ when the triple point dies, and finally the moment after.  In Figure~\ref{fig: Find T3 move} we realize the total effect of these paired critical times in terms of \ref{move:T3} and the moves of Figure~\ref{fig: ribbon appears}.

\begin{figure}
     \centering
     \begin{subfigure}[b]{0.13\textwidth}
         \centering
         \begin{tikzpicture}
         \node at (0,0){\includegraphics[width=\textwidth]{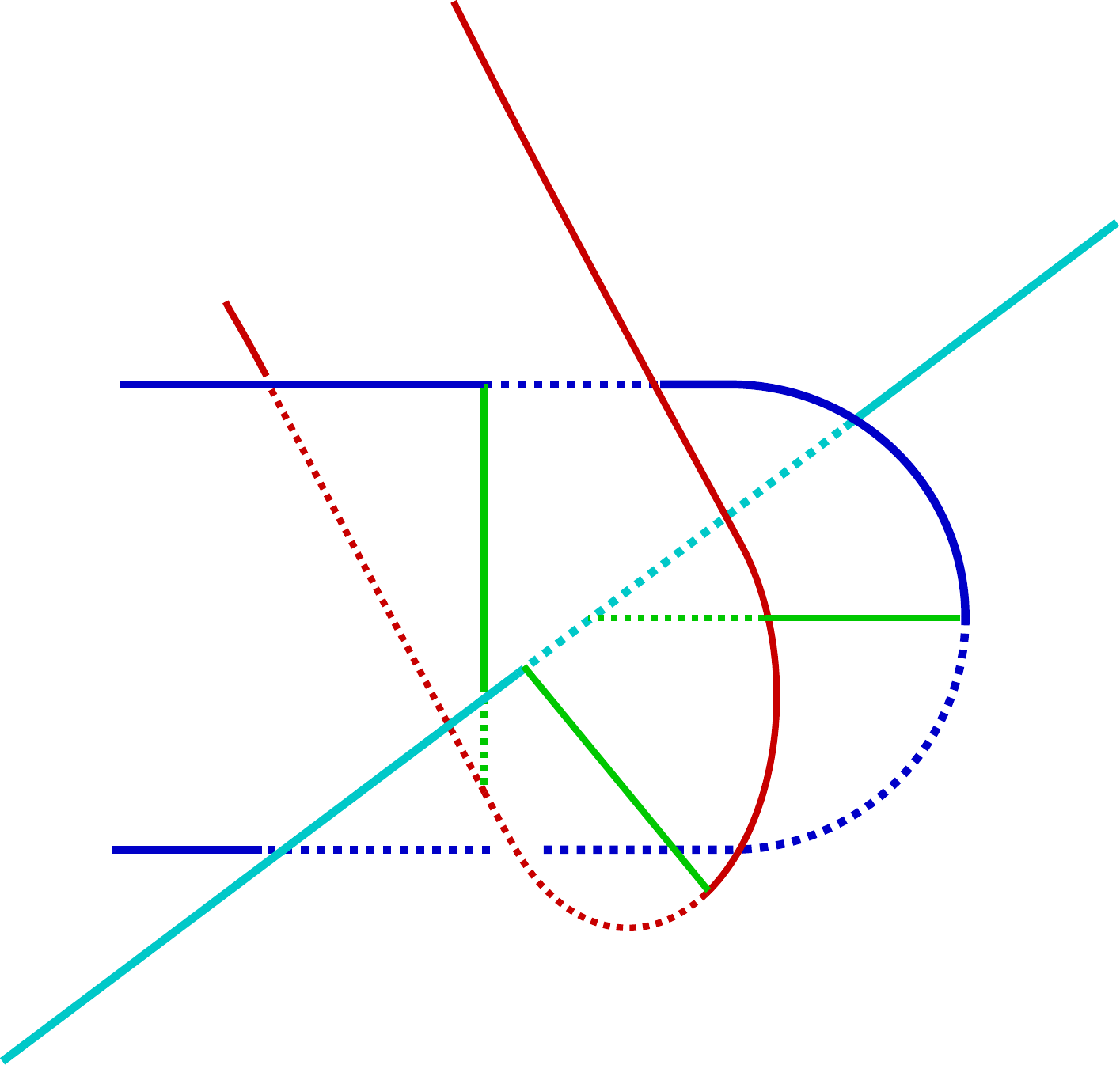}};
         \end{tikzpicture}
     \caption{}
     \label{fig: before Type IV crit.}
\end{subfigure}
     \begin{subfigure}[b]{0.13\textwidth}
         \begin{tikzpicture}
         \node at (0,0){\includegraphics[width=\textwidth]{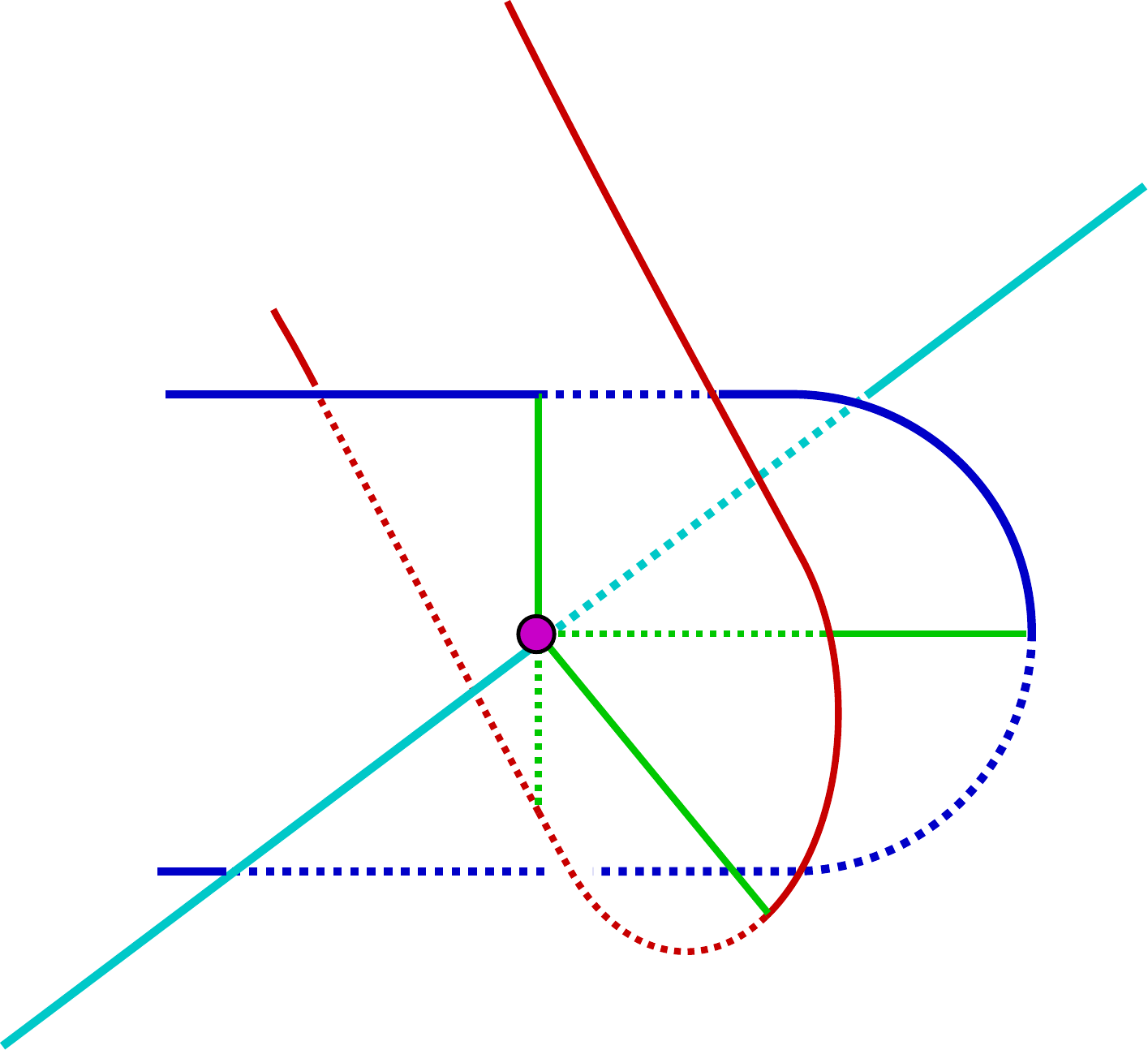}};
         \end{tikzpicture}
     \caption{}
     \label{fig: Type IV crit.}
\end{subfigure}
     \begin{subfigure}[b]{0.13\textwidth}
         \begin{tikzpicture}
         \node at (0,0){\includegraphics[width=\textwidth]{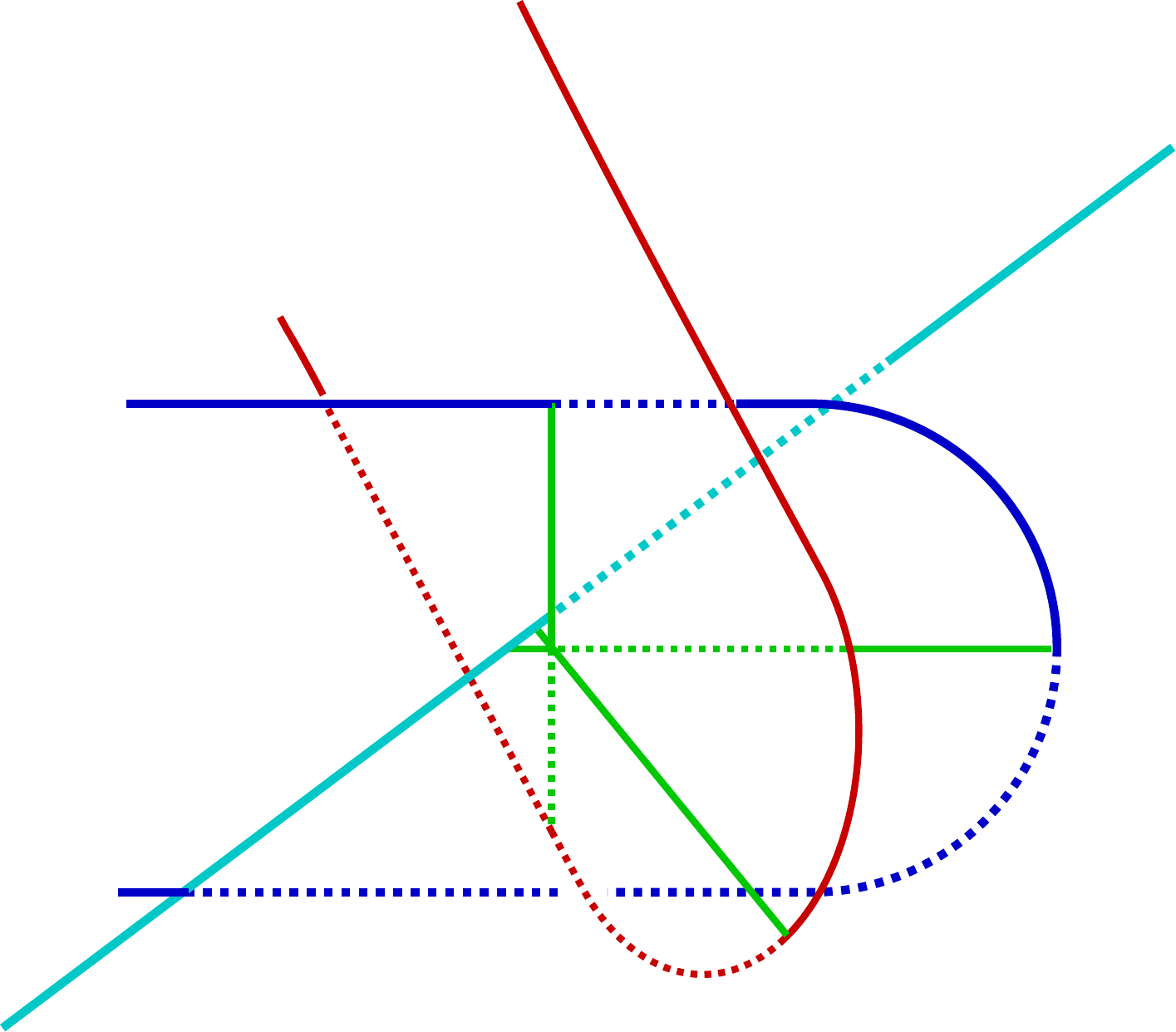}};
         \end{tikzpicture}
     \caption{}
     \label{fig: between1.}
\end{subfigure}
     \begin{subfigure}[b]{0.13\textwidth}
         \begin{tikzpicture}
         \node at (0,0){\includegraphics[width=\textwidth]{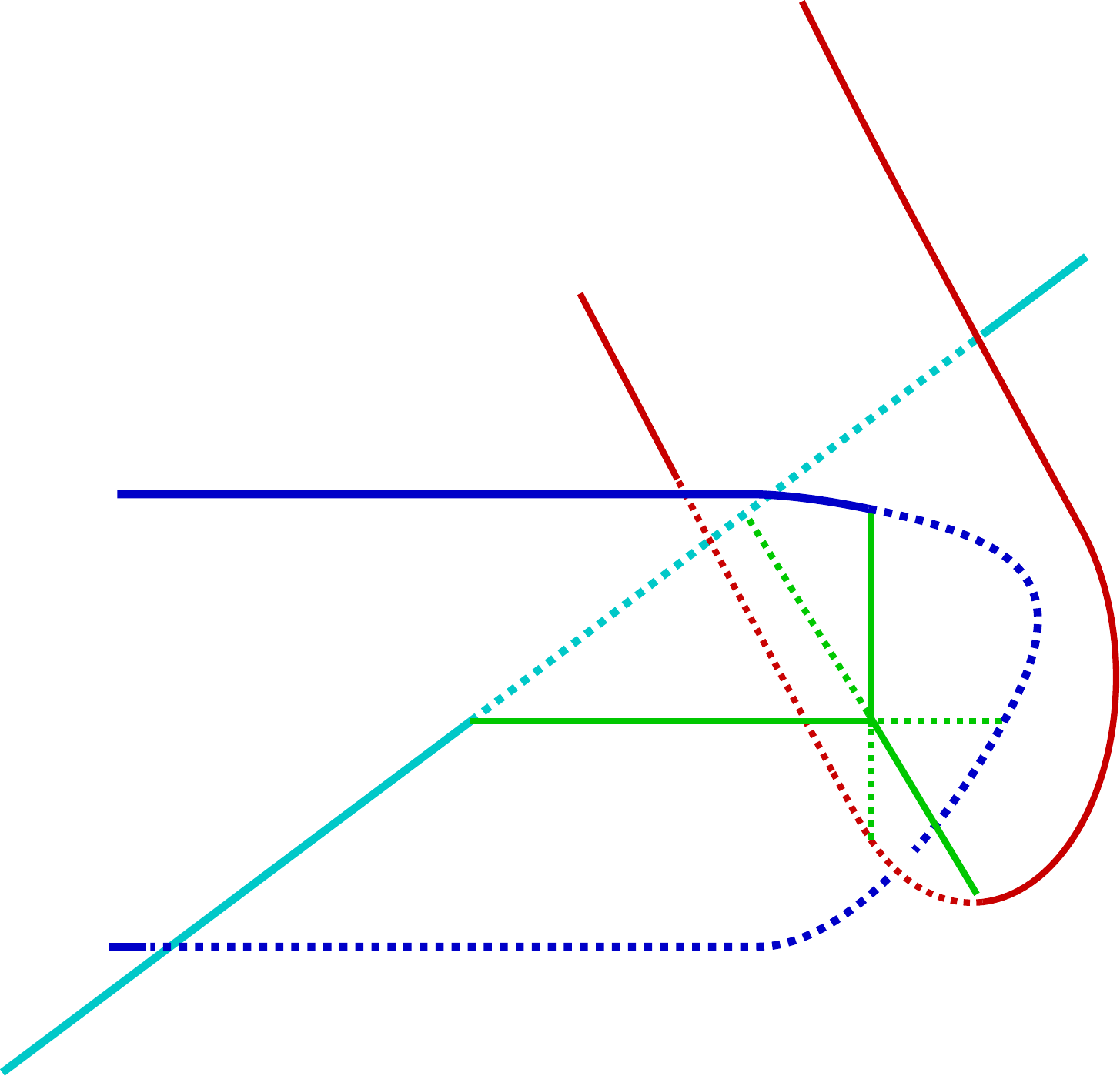}};
         \end{tikzpicture}
     \caption{}
     \label{fig: between2.}
\end{subfigure}
     \begin{subfigure}[b]{0.13\textwidth}
         \begin{tikzpicture}
         \node at (0,0){\includegraphics[width=\textwidth]{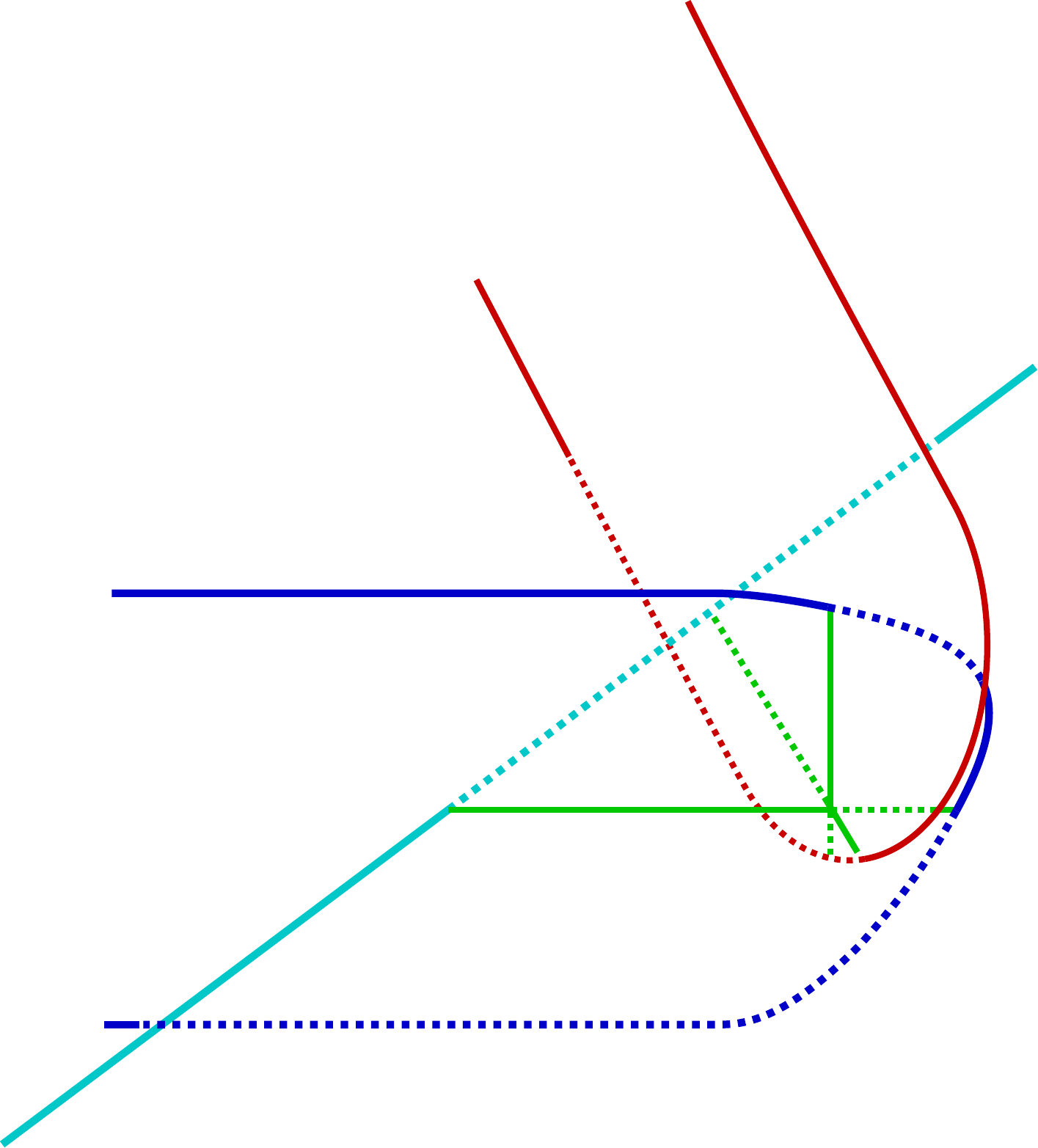}};
         \end{tikzpicture}
     \caption{}
     \label{fig: between3.}
\end{subfigure}
     \begin{subfigure}[b]{0.13\textwidth}
         \begin{tikzpicture}
         \node at (0,0){\includegraphics[width=\textwidth]{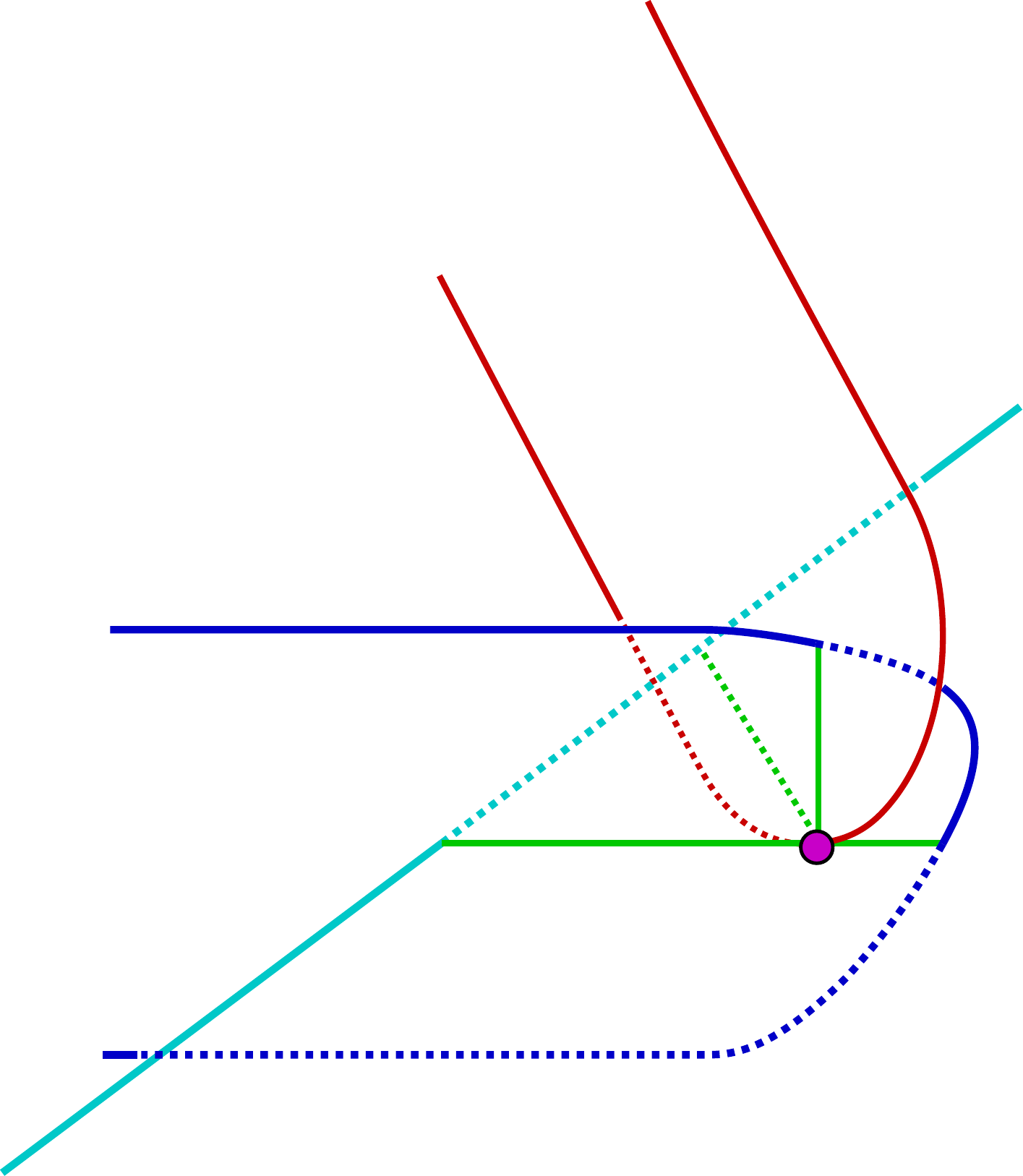}};
         \end{tikzpicture}
     \caption{}
     \label{fig: Type IV crit II.}
\end{subfigure}
     \begin{subfigure}[b]{0.13\textwidth}
         \begin{tikzpicture}
         \node at (0,0){\includegraphics[width=\textwidth]{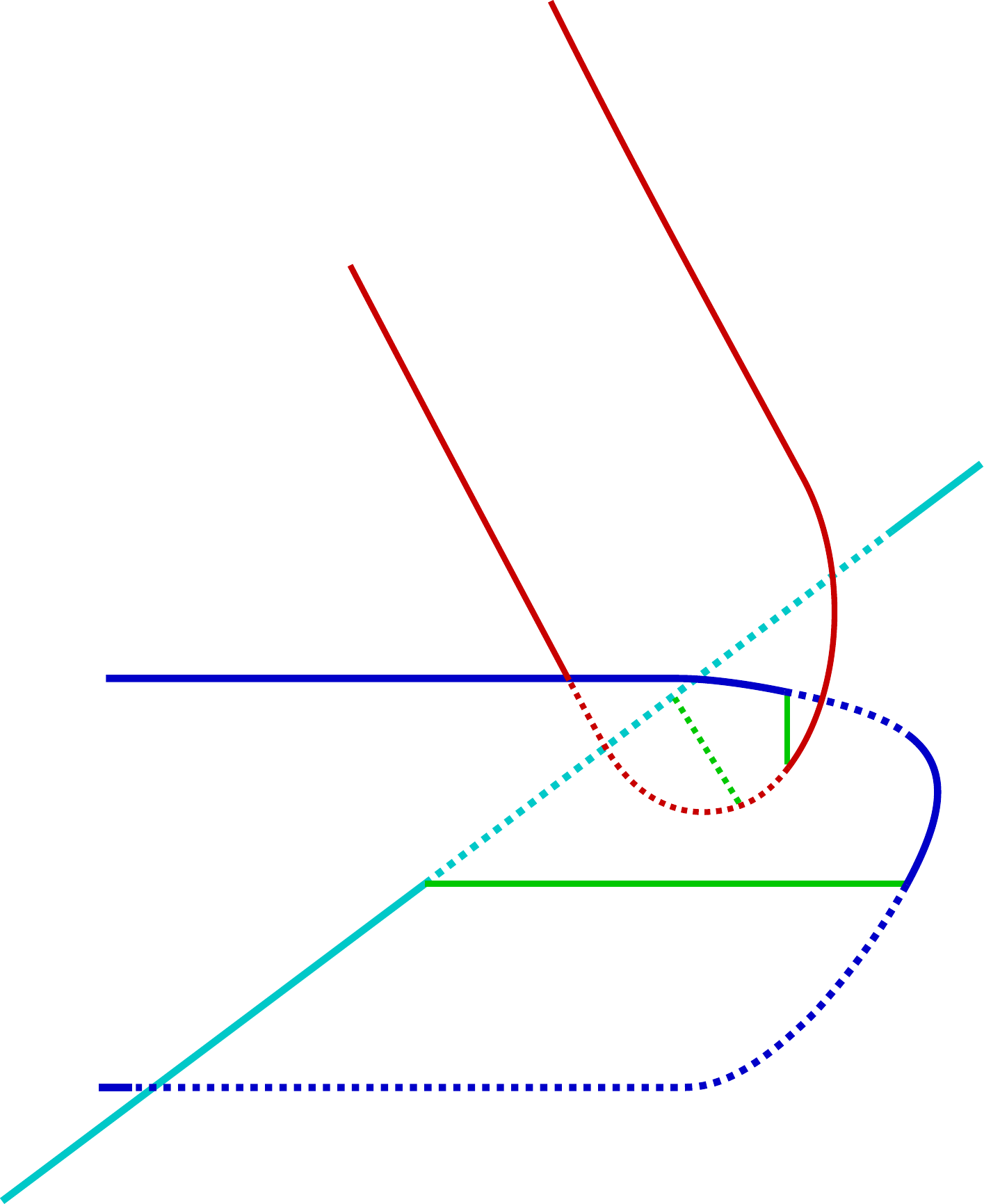}};
         \end{tikzpicture}
     \caption{}
     \label{fig: after type IV crit}
\end{subfigure}
     
        \caption{Left to right:  A triple point appears at a type~\ref{CP:bdryTriple} critical point.  The resulting triple point travels the length of a clasp.  The triple point vanishes at a second critical time.  To aid in readability, the surfaces are not visible. Clasps appear as green.}
        \label{fig: type IV paired}
\end{figure}

\begin{figure}
     \centering
     
         \begin{tikzpicture}
        \node at (0,0){\includegraphics[width=.12\textwidth]{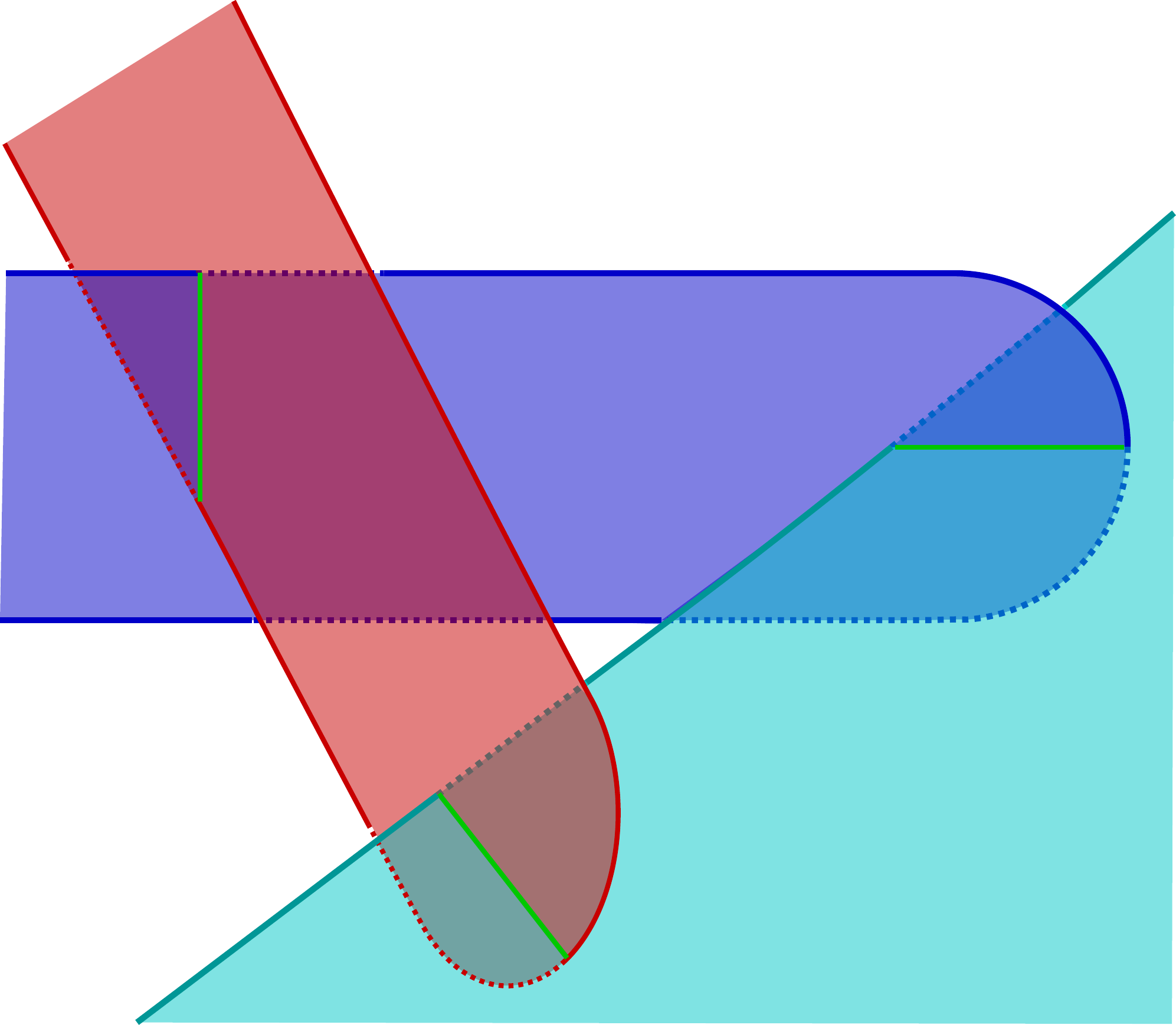}};
        \node at (1.5,0){$\longrightarrow$};
        \node[below] at (1.5, -1.12){\small{\pref{fig: split a clasp} and \pref{fig: merge clasps}$^{-1}$}};
        \node at (3,0){\includegraphics[width=.12\textwidth]{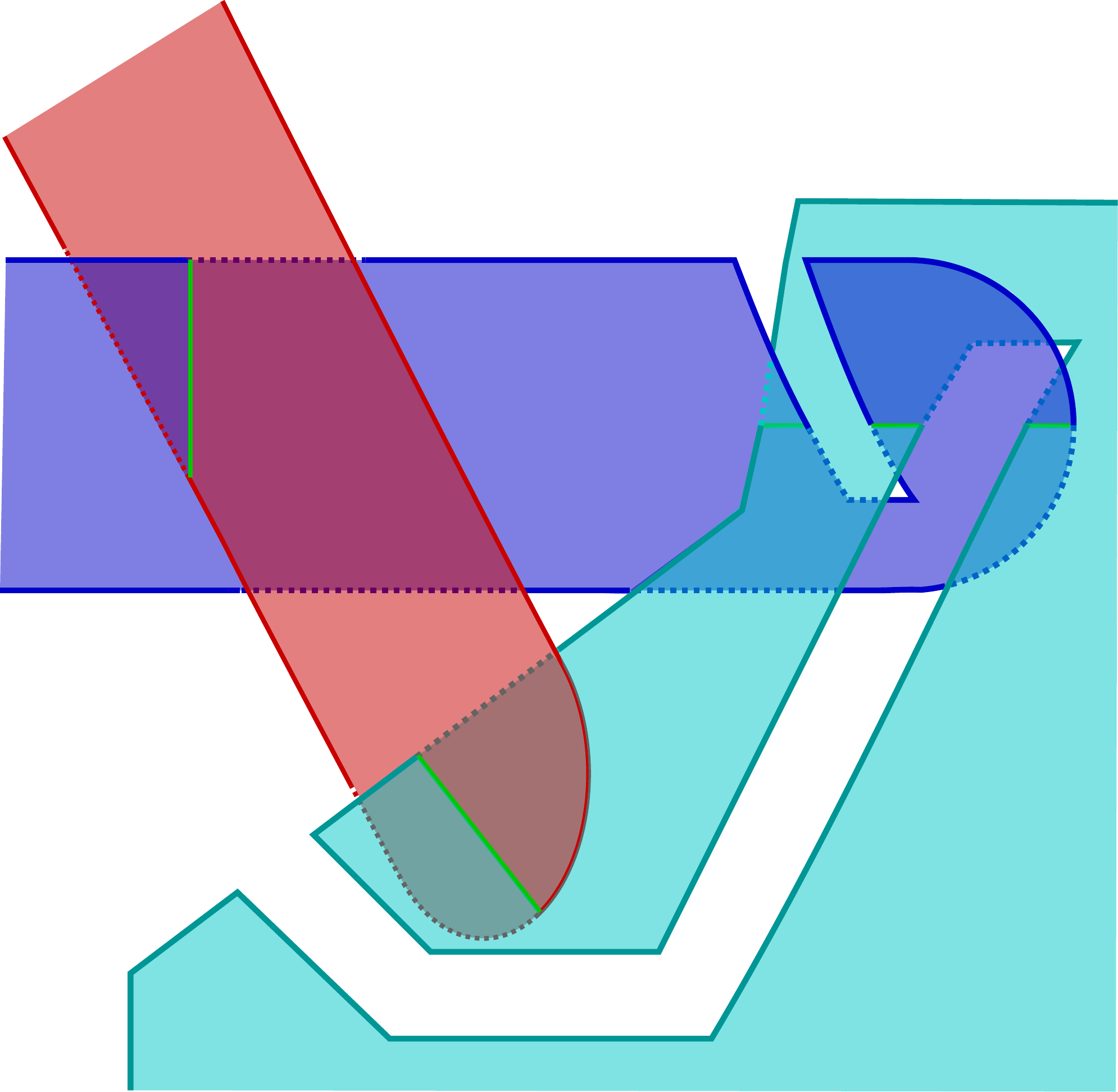}};
        \node at ({3+1.5},0){$\longrightarrow$};
        \node[below] at ({3+1.5}, -1.12){\small{\ref{move:T3}}};
        \node at (6,0){\includegraphics[width=.12\textwidth]{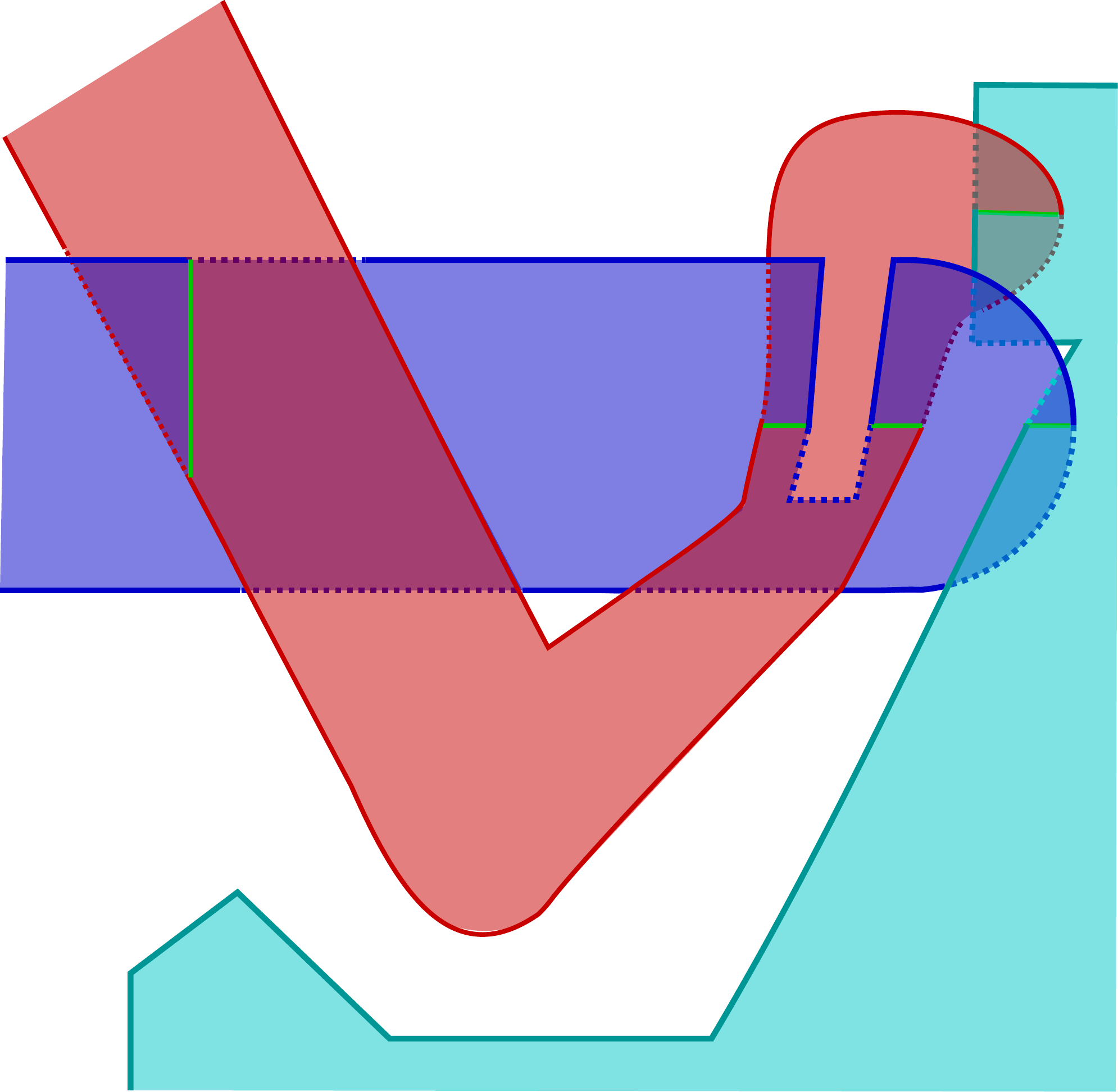}};
        \node at ({6+1.5},0){$\longrightarrow$};
        \node[below] at ({6+1.5}, -1.12){\small{\pref{fig: merge clasps}}};
        \node at (9,0){\includegraphics[width=.12\textwidth]{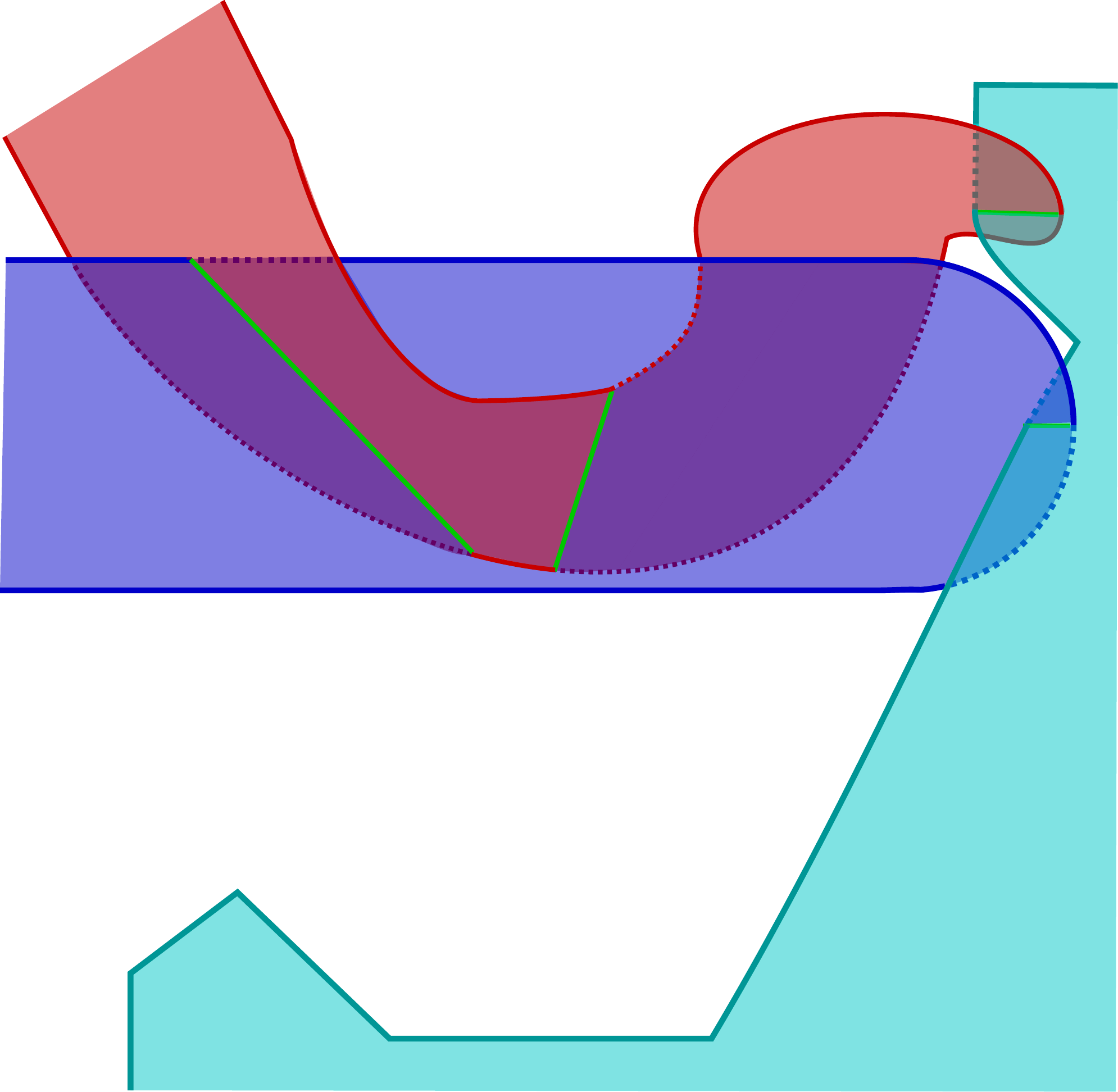}};
        \node at ({9+1.5},0){$\longrightarrow$};
         \node[below] at ({9+1.5},-1.12){\small{\pref{fig: split a clasp}$^{-1}$}};
        \node at (12,0){\includegraphics[width=.12\textwidth]{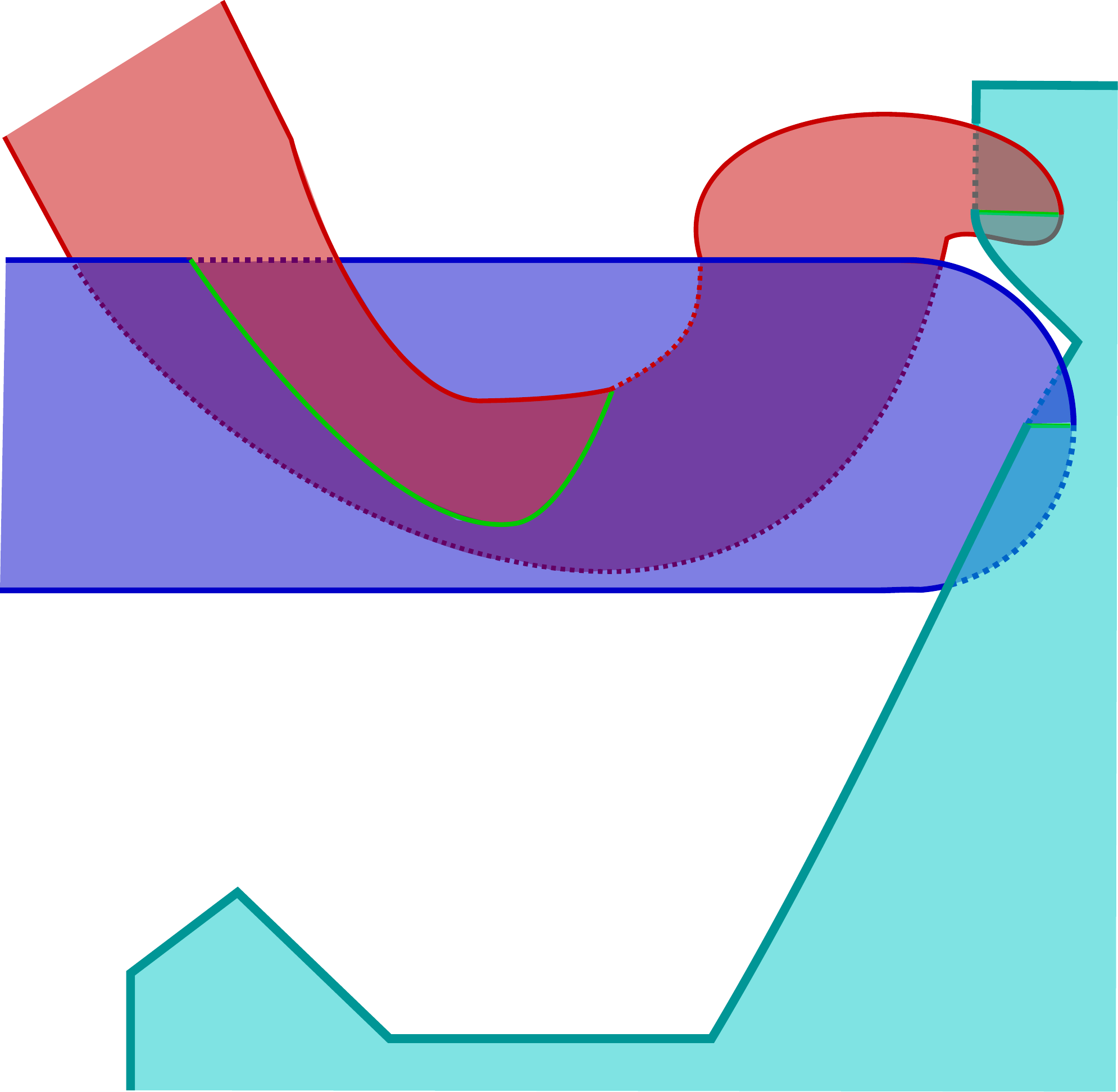}};
         \end{tikzpicture}
            
        \caption{A sequence of the moves of Figure~\ref{fig: ribbon appears}, their inverses, and the \ref{move:T3} move relating diagrams \ref{fig: before Type IV crit.} and \ref{fig: after type IV crit}.  Transitioning left to right:  Split a clasp into a clasp and a ribbon and split that ribbon into two claps; The \ref{move:T3} move; Merge two clasps into a ribbon; Merge a clasp and a ribbon into a clasp. 
}
        \label{fig: Find T3 move}
\end{figure}

\subsection{When does a collection of isotopies extend to an ambient isotopy?}\label{subsect: technical iso extn}

The final detail needed to complete the proof of Theorem~\ref{thm: corrected lemma} is Proposition~\ref{prop: get ambient isotopy}, which says that away from critical times, a collection of isotopies between pairs of surface systems extends to an ambient isotopy on the whole space. This result seems well-known; however, since we could find no explicit reference, we include an argument for completeness.

\begin{proof}[Proof of Proposition~\ref{prop: get ambient isotopy}.]
Let $F = F_1\cup\dots \cup F_{n}$ be a surface system and $\Phi = \{\Phi_i\}$ be a collection of isotopies from $F$ to another surface system.  Suppose also that for all $t\in [0,1]$ we have that $F^t = \Phi_1^t(F_1)\cup \dots \cup\Phi_n^t(F_n)$ is a surface system.  
Let $\bbF_i \subseteq S^3 \x [0,1]$ be the trace of $\Phi_i^t$, $\bbX_{ij} = \bbF_i \cap \bbF_j$, and $\bbX_{ijk} = \bbF_i \cap \bbF_j\cap \bbF_k$. 

As we observed in the proof of Proposition~\ref{pairingcp}, critical points of the restriction of $p:S^3\times[0,1]\to [0,1]$ to $\bbX_{ij}$ and to $\bbX_{ijk}$ correspond to times when $F^t$ fails to be a surface system. We have assumed that this never happens.  Thus, $p|_{\bbX_{jk}}$ and $p|_{\bbX_{ijk}}$ have no critical values.

Appealing to the regular interval theorem of morse theory reveals a diffeomorphism $\mathcal{Q}_{ijk}:(F_i\cap F_j\cap F_k)\times[0,1] \to \bbX_{ijk}$ for which $p(\mathcal{Q}_{ijk}(x,t)) = t$; see \cite[Lemma 2.10]{Morse}. Thus, $\mathcal{Q}_{ijk}$ gives a parametrization of $\bbX_{ijk}$ as an isotopy of $F_i\cap F_j\cap F_k$. Since there are no quadruple intersections,  we have that $\bigcup_{i,j,k} \mathcal{Q}_{ijk}$ is an isotopy of the collection of all triple points. This isotopy extends to an ambient isotopy $\mathcal{R}$ on $S^3$. 

Replace $\Phi^t$ by its composition with the inverse of $\mathcal{R}$,  $\{(\mathcal{R}^t)\inv \circ \Phi_i^t\}_i$.  We now have that $F_i^t \cap F_j^t \cap F_k^t = F_i \cap F_j \cap F_k$ for all $t\in [0,1]$ and all $i,j,k$. For any triple point $q \in F_i \cap F_j \cap F_k$,  take a 3-ball neighborhood $B_q$ of $q$, small enough to ensure that some identification $\mathcal{B}^t: \mathbb{B}^3 \rightarrow B_q$ sends the $xy-$unit disk to $F_i^t$, the $xz$-unit disk to $F_j^t$ and the $yz$-unit disk to $F_k^t$. The isotopy extension theorem concludes that the isotopy $\mathcal{B}$ extends to an ambient isotopy of $S^3$, which we also call $\mathcal{B}$. By replacing $\Phi$ with $\{(\mathcal{B}^t)\inv \circ \Phi_i^t\}$, we arrange that $F_i^t \cap B_p = F_i\cap B_p$ for all $t\in [0,1]$ and all $i$. 

In the same way, we arrange that $F^t$ is fixed in $\nu(\bdry F)$, a neighborhood of $\bdry F$. 
Let $U :=( \bigcup_{q} B_q) \cup \nu(\bdry F)$.  $(\Phi_i^t)|_{F_i\bk U}:(\Phi_i^t)|_{F_i\bk U}\to S^3\bk U$ is now an isotopy though neat embeddings.  The trace is given by $\bbF_i' := \bbF_i\bk(U\times[0,1])$. They key observation is that we have now removed all triple points.

We may now use the same technique as we did for the triple points to fix the double-intersections.  By applying the regular interval theorem to $\bbF_i'\cap \bbF_j'\subseteq \bbX_{ij}$, and then using that there are no triple points we  parametrize $\Cup_{i,j}\bbF_i'\cap \bbF_j'$ as the trace of an isotopy of $\Cup_{i,j}F_i\cap F_j\bk U$ though neat embeddings into $S^3\bk U$.  We extend this isotopy to an ambient isotopy on $S^3\bk U$ and then compose $(\Phi_i^t)|_{F_i\bk U}$ with the inverse of this ambient isotopy.  As a result we ensure that $F_i^t\cap F_j^t\bk U = F_i\cap F_j\bk U$ for all $t$.  Also similar to the treatment of triple points, we arrange that for a small neighborhood $W\subseteq S^3\bk U$ of $\Cup_{i,j}F_i\cap F_j\bk U$, $(F_i^t\bk U)\cap W = (F_i\bk U)\cap W$ for all $t\in [0,1]$ and all $i$. 

$(\Phi_i^t)|_{F_i\bk(U\cup W)}$ is an isotopy through neat embeddedings into $S^3\bk(U\cup W)$ and $(\Phi_i^t)(F_i\bk(U\cup W))$ is disjoint from $(\Phi_j^t)(F_j\bk(U\cup W))$ for all $i,j,t$.  Thus, $\Cup_i(\Phi_i^t)|_{F_i\bk(U\cup W)}$ gives an isotopy of a disjoint union of neatly embedded surfaces, $\Cup_i F_i\bk(U\cup W)$.  This extends to an ambient isotopy on $S^3 \bk (U \cup W)$. Extend this ambient isotopy by the identity over $W$ and then over $U$ to arrive at an ambient isotopy of $S^3$ that extends each $\Phi_i$ simultaneously.   
\end{proof}

\bibliographystyle{plain}

\bibliography{biblio}

\end{document}